\newcommand{\bdry}[1]{\partial #1}
\newcommand{\A}{{\cal A}}
\newcommand{\B}{{\cal B}}
\newcommand{\C}{{\cal C}}
\newcommand{\D}{{\cal D}}
\newcommand{\Da}[1]{\D_{\!a} #1}
\newcommand{\DA}[1]{\D_{\!\A} #1}
\newcommand{\dint}{\ds{\int}}
\newcommand{\dist}[2]{\text{dist}\, (#1,#2)}
\newcommand{\ds}[1]{\displaystyle #1}
\newcommand{\E}{{\cal E}}
\newcommand{\EA}[1]{\if #1'' \E_{\!\A} \else \E_{\!\A}(#1) \fi}
\newcommand{\eps}{\varepsilon}
\newcommand{\norm}[2][]{\left\|#2\right\|_{#1}}
\renewcommand{\o}{\text{o}}
\renewcommand{\O}{\text{O}}
\newcommand{\PS}[1]{$(\text{PS})_{#1}$}
\newcommand{\pnorm}[2][]{\if #1'' \left|#2\right|_p \else \left|#2\right|_{#1} \fi}
\newcommand{\R}{\mathbb R}
\newcommand{\seq}[1]{\left(#1\right)}
\newcommand{\set}[1]{\left\{#1\right\}}
\newcommand{\vol}[1]{\left|#1\right|}
\newcommand{\wto}{\rightharpoonup}
\def\ocirc#1{\ifmmode\setbox0=\hbox{$#1$}\dimen0=\ht0
\advance\dimen0 by1pt\rlap{\hbox to\wd0{\hss\raise\dimen0
\hbox{\hskip.2em$\scriptscriptstyle\circ$}\hss}}#1\else
{\accent"17 #1}\fi}
\DeclareMathOperator{\divg}{div}
\DeclareMathOperator*{\esssup}{ess\, sup}
\DeclareMathOperator{\supp}{supp}
\newenvironment{enumroman}{\begin{enumerate}
\renewcommand{\theenumi}{$(\roman{enumi})$}
\renewcommand{\labelenumi}{$(\roman{enumi})$}}{\end{enumerate}}
\newenvironment{properties}[1]{\begin{enumerate}
\renewcommand{\theenumi}{$(#1_\arabic{enumi})$}
\renewcommand{\labelenumi}{$(#1_\arabic{enumi})$}}{\end{enumerate}}
\newtheorem{corollary}{Corollary}[section]
\newtheorem{lemma}[corollary]{Lemma}
\newtheorem{proposition}[corollary]{Proposition}
\newtheorem{theorem}[corollary]{Theorem}
\theoremstyle{definition}
\newtheorem{definition}[corollary]{Definition}
\theoremstyle{remark}
\newtheorem{remark}[corollary]{Remark}
\numberwithin{equation}{section}
\title{\bf Critical growth double phase problems:\\
the local case and a Kirchhoff type case\thanks{F.C. was partially supported by the INdAM-GNAMPA. 
F.C. also acknowledges the support of the Department of Mathematical Sciences, Florida Institute of Technology during her visits to Melbourne, where parts of this work have been achieved.
\newline \indent\; {\em MSC2010:} Primary 35J92, Secondary 35B33
\newline \indent\; {\em Key Words and Phrases:} Double phase operator; Brezis-Nirenberg type critical problems; Musielak-Orlicz Sobolev spaces; variational methods; Nonlocal Kirchhoff problems.}}
\author{\bf Francesca Colasuonno\\
Dipartimento di Matematica\\
Alma Mater Studiorum Universit\`{a} di Bologna\\
Piazza di Porta San Donato 5, 40126 Bologna, Italy\\
\em francesca.colasuonno@unibo.it\\
[\bigskipamount]
\bf Kanishka Perera\\
Department of Mathematics\\
Florida Institute of Technology\\
150 W University Blvd, Melbourne, FL 32901, USA\\
\em kperera@fit.edu}
\date{}
\begin{document}

\maketitle

\begin{abstract}
We study Brezis-Nirenberg type problems, governed by the double phase operator $- \divg \left(|\nabla u|^{p-2}\, \nabla u + a(x)\, |\nabla u|^{q-2}\, \nabla u\right)$, that involve a critical nonlinearity of the form $|u|^{p^\ast - 2}\, u + b(x)\, |u|^{q^\ast - 2}\, u$. Both for the local case and for related nonlocal Kirchhoff type problems, we prove new compactness and existence results using variational methods in suitable Musielak-Orlicz Sobolev spaces. For these functional spaces, we prove some continuous and compact embeddings that are of independent interest. The study of the local problem is complemented by some nonexistence results of Poho\v{z}aev type. 
\end{abstract}
\tableofcontents

\section{Introduction}
We consider the double phase problem
\begin{equation} \label{1}
\left\{\begin{aligned}
- \divg \left(|\nabla u|^{p-2}\, \nabla u + a(x)\, |\nabla u|^{q-2}\, \nabla u\right) & = f(x,u) && \text{in } \Omega\\[10pt]
u & = 0 && \text{on } \bdry{\Omega},
\end{aligned}\right.
\end{equation}
where $\Omega \subset \R^N,\, N \ge 2$ is a bounded domain, the exponents $p$ and $q$ satisfy
\begin{equation}
\label{eq:pq}
1 < p < q < N\quad \mbox{with}\quad \frac{q}{p} \le \frac{N}{N-1},
\end{equation}
which implies $q < p^*$, $a \in L^\infty(\Omega)\cap C^{0,\frac{N}{p}(q-p)}(\Omega)$ is a nonnegative weight, and $f$ is a Carath\'{e}odory function on $\Omega \times \R$. We introduce the following generalized Young function
\[
\A(x,t) := t^p + a(x)\, t^q, \quad (x,t) \in \Omega \times [0,\infty).
\]
The double phase operator
\[
\DA{u} := \divg \left(|\nabla u|^{p-2}\, \nabla u + a(x)\, |\nabla u|^{q-2}\, \nabla u\right)
\]
and the associated energy
\[
\EA{u} := \int_\Omega \left(\frac{1}{p}\, |\nabla u|^p + \frac{a(x)}{q}\, |\nabla u|^q\right) dx
\]
appear in the study of strongly anisotropic materials, where the weight $a(x)$ drives the geometry of a composite of two different materials with hardening powers $p$ and $q$ (see \cite{MR1329546,MR864171,MR1350506
}). The energy $\mathcal E_\A$ belongs to the class of the integral functionals with {\it nonstandard growth condition}, according to a terminology introduced by Marcellini in \cite{MR969900,MR1094446}. A regularity theory for  minimizers of such energy functionals was recently developed by Mingione et al., see \cite{MR3348922,
MR3298012,MR3294408,MR3360738}.
The functional $\mathcal E_\A$ is naturally defined on the Musielak-Orlicz Sobolev space $W^{1,\,\A}_0(\Omega)$ associated with $\A$. We denote by
\[
\rho_\A(u) := \int_\Omega \A(x,|u|)\, dx
\]
the $\A$-modular and by
\[
\norm{u} := \inf \set{\tau > 0 : \rho_\A\bigg(\frac{\nabla u}{\tau}\bigg) \le 1}
\]
the standard norm in $W^{1,\,\A}_0(\Omega)$. 

When $f$ has subcritical growth, existence and multiplicity of solutions to this problem have been widely studied in the recent literature (see, e.g., \cite{MR4045487,
MR4339038,MR3730751}). Our goal here is to obtain nontrivial solutions of problem \eqref{1} when the nonlinearity $f$ has critical growth.

Critical growth for such spaces was not completely understood until the very recent paper \cite{CianchiDiening}, where Cianchi and Diening 
generalize to Musielak-Orlicz spaces the results obtained in \cite{MR1406683} for Orlicz spaces: they prove the optimal Sobolev embedding for Musielak-Orlicz Sobolev spaces under minimal assumptions. For  $W^{1,\,\A}_0(\Omega)$, the assumptions required on $a$ and $q/p$ are those introduced above. In particular, they show that the sharp Sobolev conjugate of $\A$ is equivalent to $t^{p^*}+a(x)^{q^*/q}t^{q^*}$ when $N>q$, cf. \cite[Examples 2.4 and 3.11, and Theorem 3.7]{CianchiDiening}, see also \cite[Proposition 3.4]{HoWinkert}. 
This leads us to consider critical nonlinearities of the form
\begin{equation}\label{eq:f}
f(x,u) = \mu|u|^{p^\ast - 2}\, u + b(x)\, |u|^{q^\ast - 2}\, u + c(x)\, |u|^{s-2}\, u,
\end{equation}
where $1 < s < q^\ast$, $\mu\ge 0$ is a parameter, $b \in C(\bar\Omega)$ is a nonnegative function satisfying
\begin{equation} \label{2}
a_0 := \inf_{x \in \supp(b)}\, a(x) > 0 \quad\mbox{(possibly $a_0=+\infty$ when $b\equiv 0$),}
\end{equation}
and $c \in L^\infty(\Omega)$ is a nonnegative function that satisfies the additional assumption
\begin{equation}\label{eq:hp-on-c}
c(x)\le C\, a(x)^{s/q} \quad \text{for a.a.\! } x \in \Omega
\end{equation}
for some constant $C>0$ when $s \ge p^\ast$. The critical term $\mu |u|^{p^\ast - 2}\, u + b(x)\, |u|^{q^\ast - 2}\, u$ in the nonlinearity $f$ reflects the structure of the sharp conjugate of $\A$.
The need for assumption \eqref{2} in this paper is essentially technical, we use that $a$ is positive on the support of $b$ to reduce ourselves to the classical Sobolev spaces for which many inequalities are available for modulars and the explicit expression of the extremal functions for the Sobolev inequalities in $\mathbb R^N$ are known. More precisely, \eqref{2} is needed to prove the estimate \eqref{1000}, which in turn is crucial to showing that the weak solution of problem \eqref{1} to which a (PS)$_{\beta}$ sequence converges weakly is nonzero (see the proof of Proposition \ref{Proposition 3}). 
The term of growth $|u|^{s-1}$ is a subcritical perturbation of the critical part of the nonlinearity, and condition \eqref{eq:hp-on-c} is needed to prove its subcriticality when $s\ge p^\ast$ (see Proposition \ref{prop:subcritical}).

Unlike in the classical Brezis-Nirenberg problem, a threshold for compactness for the energy functional associated with problem \eqref{1} cannot be found in a closed form. Moreover, there is no closed form formula for the maximum energy on a ray starting from the origin, which makes it rather delicate to show that the mountain pass level is below the compactness threshold. These new difficulties that problem \eqref{1} presents are due to the inhomogeneity of the double phase operator.
\medskip

In the second part of the paper, we will study also a nonlocal version of \eqref{1} involving a Kirchhoff type term in front of the leading operator, namely
\begin{equation} \label{2K}
\left\{\begin{aligned}
- h(\EA{u})\, \divg \left(|\nabla u|^{p-2}\, \nabla u + a(x)\, |\nabla u|^{q-2}\, \nabla u\right)& = f(x,u)
&& \text{in } \Omega\\[10pt]
u & = 0 && \text{on } \bdry{\Omega},
\end{aligned}\right.
\end{equation}
where $h : [0,\infty) \to [0,\infty)$ is the so-called Kirchhoff function and is assumed to be continuous and nondecreasing, 
and $f$ is as in \eqref{eq:f}-\eqref{eq:hp-on-c}.

In \cite{k}, Kirchhoff introduced a factor of the form $p_0+\frac{\mathcal E h}{2L} \int_0^L u_x^2$ in front of the term $u_{xx}$ in the one dimensional d'Alembert's wave equation, in order to take into account the effects of the changes in the length of the string produced by transversal vibrations. Similarly, nonlocal terms depending on the first derivatives of the unknown function in front of the leading operator in second order elliptic problems are usually referred to as Kirchhoff terms and the prototypical shape of the Kirchhoff function is of polynomial type: $h(t) = \alpha_1 + \alpha_2\, t^{\theta - 1}$, with $\alpha_1,\, \alpha_2 \ge 0$. Kirchhoff problems received much attention after the work of Lions \cite{Lions}, where a functional analysis framework was introduced for them.
These problems are said to be degenerate if the Kirchhoff function vanishes at zero, otherwise they are called nondegenerate. It is worth noting that the compactness issues are more delicate in the degenerate case, see for instance \cite{cp,fp}. Our results for problem \eqref{2K} cover also the degenerate case. We refer to \cite{afmw} for a different double phase Kirchhoff problem, with critical nonlinearity of type $|u|^{p^*-2}u$. 

As for the local case, also the proofs of existence results of \eqref{2K} are in the spirit of the celebrated paper by Brezis and Nirenberg \cite{BN}. They rely on the mountain pass theorem and the major difficulty is the lack of compactness arising from the presence of the critical term in the problem. We prove that the energy functional satisfies the Palais-Smale condition for levels below a certain threshold and we need to show that the mountain pass level is below such a threshold. The nonlocality and the inhomogeneity of the Kirchhoff double phase operator make the energy estimates rather delicate: beyond the difficulties arising from the inhomogeneity of the double phase operator, in this nonlocal setting the Kirchhoff function applies to $\mathcal E_A$, thus combining together the $L^p$ norm and the $L^q_a$ seminorm of the gradient; this makes even more complicated the use of results known for classical Sobolev spaces as described for the local problem.
\smallskip

The paper is organized as follows: we devote the next two sections to the precise statements of the main results for both the local and the nonlocal cases. In Section \ref{sec2}, we recall some notation, definitions and preliminary results for Musielak-Orlicz spaces, and introduce the functional setting for double phase problems. We also prove some useful embedding results and derive explicitly some properties of the Kirchhoff function $h$. Sections \ref{sec3} to \ref{sec6} are devoted to the study of the local problem \eqref{1}: in Section \ref{sec3} we prove the compactness results and the asymptotic estimates of the threshold energy level for compactness, 
in Section \ref{sec4} we prove the existence theorems,  
and in Section \ref{sec6} we prove a Poho\v{z}aev type identity 
and nonexistence results. 
Finally, the last two sections are devoted to the study of the Kirchhoff problem \eqref{2K}: in Section \ref{sec7} we prove the compactness results and in Section~\ref{sec8} we prove the existence results.
\smallskip 

Without further mentioning, {\it throughout the paper} we assume that $\Omega \subset \R^N$ is a smooth bounded domain, that $p$ and $q$ satisfy \eqref{eq:pq}, that $0 \le a \in L^\infty(\Omega)\cap C^{0,\frac{N}{p}(q-p)}(\Omega)$, $b\in C(\bar\Omega)$, and that
$h:[0,\infty)\to[0,\infty)$ is continuous and nondecreasing. 

We observe that the conditions on the regularity of $\Omega$ and $b$ can be weakened in several results, but they are essential in estimating the threshold energy level for compactness (see the proof of Proposition \ref{prop:K-add}, useful e.g. for Proposition \ref{Proposition 3}). So, for the sake of simplicity, unless otherwise stated, we always assume $b$ to be continuous up to the boundary of $\Omega$ and $\Omega$ to be smooth enough to ensure the validity of the classical Sobolev embeddings. 

Throughout the paper, we write
\[
\pnorm{u} = \left(\int_\Omega |u|^p\, dx\right)^{1/p}, \qquad \pnorm[q,a]{u} = \left(\int_\Omega a(x)\, |u|^q\, dx\right)^{1/q}
\]
for the norm in the Lebesgue space $L^p(\Omega)$ and the seminorm in the weighted Lebesgue space $L^q_a(\Omega)$, respectively, and $\pnorm[\infty]{\cdot}$ for the norm in $L^\infty(\Omega)$. We also write $p^\ast := Np/(N - p)$ and $q^\ast := Nq/(N - q)$ for the critical Sobolev exponents.
Moreover, we denote simply by $C_S$ all constants arising from different Sobolev embeddings, when their value is not important for the proofs.
Finally, we use the notation $X\hookrightarrow Y$ and $X\hookrightarrow\hookrightarrow Y$ for continuous and compact embeddings, respectively.

\section{Statements of the results for the local case}
In this section, we state precisely our results both for the local case.

\subsection{Existence results}

First we consider the problem
\begin{equation} \label{6}
\left\{\begin{aligned}
-\Da{u} & = \lambda\, |u|^{r-2}\, u + \mu\, |u|^{p^\ast - 2}\, u + b(x)\, |u|^{q^\ast - 2}\, u && \text{in } \Omega\\[10pt]
u & = 0 && \text{on } \bdry{\Omega},
\end{aligned}\right.
\end{equation}
where $p \le r < p^\ast$ and $\lambda, \mu > 0$ are parameters and $b\in C(\bar\Omega)$ is a nonnegative function satisfying \eqref{2}. A weak solution of this problem is a function $u \in W^{1,\,\A}_0(\Omega)$ satisfying
\[
\int_\Omega \left(|\nabla u|^{p-2} + a(x)\, |\nabla u|^{q-2}\right) \nabla u \cdot \nabla v\, dx = \int_\Omega \left(\lambda\, |u|^{r-2} + \mu\, |u|^{p^\ast - 2} + b(x)\, |u|^{q^\ast - 2}\right) uv\, dx
\]
for all $v \in W^{1,\,\A}_0(\Omega)$. Let
\begin{equation} \label{7}
\lambda_1(p) = \inf_{u \in W^{1,\,p}_0(\Omega) \setminus \set{0}}\, \frac{\dint_\Omega |\nabla u|^p\, dx}{\dint_\Omega |u|^p\, dx}
\end{equation}
be the first Dirichlet eigenvalue of the $p$-Laplacian in $\Omega$. We have the following theorem.

\begin{theorem} \label{Theorem 1}
Assume that \eqref{2} holds and there is a ball $B_\rho(x_0) \subset \Omega$ such that
\begin{equation} \label{33}
a(x) = 0 \quad \text{for all } x \in B_\rho(x_0).
\end{equation}
Then there exists $b^\ast > 0$ such that problem \eqref{6} has a nontrivial weak solution in $W^{1,\,\A}_0(\Omega)$ when $\mu > 0$ and $b_\infty < b^\ast$, where $b_\infty := \pnorm[\infty]{b}$, in each of the following cases:
\begin{enumroman}
\item $N \ge p^2$, $r = p$, and $0 < \lambda < \lambda_1(p)$,
\item $N \ge p^2$, $p < r < p^\ast$, and $\lambda > 0$,
\item $N < p^2$, $(Np - 2N + p)\, p/(N - p)(p - 1) < r < p^\ast$, and $\lambda > 0$.
\end{enumroman}
\end{theorem}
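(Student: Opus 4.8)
The plan is to use the mountain pass theorem for the energy functional $E$ associated with problem \eqref{6}, exploiting the compactness result in Proposition \ref{Proposition 3}: $E$ satisfies the $(\mathrm{PS})_\beta$ condition for every $\beta$ below a certain threshold $\beta^\ast$ (which, by the discussion in the introduction, is not explicitly computable). The geometry of the mountain pass is standard: because $r<p^\ast$ (and in case (i) $\lambda<\lambda_1(p)$ controls the quadratic-type term at the $p$-level), one checks that $E(u)\ge \alpha>0$ on a small sphere $\snorm{u}=\delta$ and $E(t\,u_0)\to-\infty$ as $t\to\infty$ for a fixed $u_0$, so there is a well-defined mountain pass level $\beta>0$. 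The crux is to show $\beta<\beta^\ast$, and for this one must estimate the energy along a ray $t\mapsto E(t\,u_\eps)$, where $u_\eps$ is built from the Talenti-type extremal instanton for the $p$-Laplacian supported in the ball $B_\rho(x_0)$ where $a\equiv 0$ by \eqref{33}. On that ball the operator reduces to the pure $p$-Laplacian and the critical term reduces (for $\mu>0$, since $b$ may or may not vanish there) to $\mu\,|u|^{p^\ast-2}u$ plus possibly the $b$-term, so the relevant Sobolev-type quantity is the classical $S_p$. The condition $b_\infty<b^\ast$ enters here: one shows that making $b_\infty$ small forces the threshold $\beta^\ast$ to stay above the energy level produced by the truncated instanton, which itself is close to the classical Brezis--Nirenberg value $\frac{1}{N}\,(\mu^{-(N-p)/p}S_p)^{N/p}$ up to a correction that is strictly lowered by the lower-order term $\lambda|u|^{r-2}u$.

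More concretely, I would fix the standard family $U_\eps$ of Aubin--Talenti functions, truncate with a cutoff supported in $B_\rho(x_0)$ to get $u_\eps\in W^{1,\mathcal A}_0(\Omega)$ (note $u_\eps$ then lies in $W^{1,p}_0$ supported where $a=0$, so its $W^{1,\mathcal A}_0$ norm is just a $W^{1,p}$ norm), and record the classical expansions $\int|\nabla u_\eps|^p = S_p^{N/p} + O(\eps^{(N-p)/(p-1)})$, $\int |u_\eps|^{p^\ast} = S_p^{N/p} + O(\eps^{N/(p-1)})$, together with the lower bound for $\int |u_\eps|^r$: this last term is of order $\eps^{(N-p)/(p(p-1))\cdot(\text{something})}$ and the precise rate is where the case distinction (i)/(ii)/(iii) comes from — in the classical Brezis--Nirenberg dichotomy the gain from the $\lambda|u|^r$ term beats the loss $O(\eps^{(N-p)/(p-1)})$ exactly when $N\ge p^2$ for $r=p$, or for a restricted range of $r$ when $N<p^2$; the stated lower bound $(Np-2N+p)p/(N-p)(p-1)<r$ in (iii) is precisely the threshold making $\int|u_\eps|^r$ dominate the error. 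Since $b_\infty$ is small, the extra term $\int b|u_\eps|^{q^\ast}$ (if the support of $b$ meets $B_\rho(x_0)$) is a higher-order perturbation that can be absorbed once $b^\ast$ is chosen small, and then one maximizes $t\mapsto E(t\,u_\eps)$ over $t>0$ and obtains $\max_{t\ge 0} E(t\,u_\eps) < \frac1N (\mu^{-(N-p)/p}S_p)^{N/p} \le \beta^\ast$ for $\eps$ small enough and $b_\infty<b^\ast$, using that by Proposition \ref{Proposition 3} the threshold $\beta^\ast$ is bounded below by this same quantity (possibly up to a factor depending on $a_0$ and $b_\infty$, which is why $b_\infty$ small is enough).

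Carrying out this comparison requires care because the threshold $\beta^\ast$ in Proposition \ref{Proposition 3} has no closed form, so rather than computing it I would extract from its proof a quantitative lower bound for $\beta^\ast$ in terms of $S_p$, $\mu$, $a_0$, and $b_\infty$ that degenerates to $\frac1N(\mu^{-(N-p)/p}S_p)^{N/p}$ as $b_\infty\to 0$; then choose $b^\ast$ so that for $b_\infty<b^\ast$ this lower bound exceeds $\frac1N(\mu^{-(N-p)/p}S_p)^{N/p}$ minus a fixed gap, and finally choose $\eps$ small so that $\max_t E(t\,u_\eps)$ lies in that gap. The main obstacle I anticipate is exactly this: controlling the interplay between the non-closed-form threshold $\beta^\ast$ and the maximum of the energy along the ray $t\,u_\eps$, because the inhomogeneity of $\Da{}$ means $t\mapsto E(t\,u_\eps)$ is not a simple combination of two powers of $t$ and its maximizer is not given by an elementary formula — one must argue that on the support of $u_\eps$ the $q$-phase is absent, so that along this particular ray the functional is in fact a pure $p$-functional and the classical maximization applies, while simultaneously verifying that the $b$-term does not spoil this when $\supp(b)\cap B_\rho(x_0)\neq\emptyset$. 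Once $\beta<\beta^\ast$ is established, Proposition \ref{Proposition 3} gives a nontrivial critical point, which by the structure of $E$ and a standard argument (testing with $u^-$, or noting the nonlinearity is odd-compatible) can be taken to correspond to a nontrivial weak solution of \eqref{6}.
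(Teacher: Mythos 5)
Your proposal follows essentially the same route as the paper's proof: mountain pass geometry, reduction to showing $\max_{t\ge 0}E(t v_\eps)<\beta^\ast(\mu,b_\infty)$ for a truncated Aubin--Talenti type function supported in the ball where $a\equiv 0$, the asymptotic lower bound \eqref{15} of Proposition \ref{Proposition 4} to choose $b^\ast$, Proposition \ref{Proposition 3} to pass from the \PS{\beta} sequence to a nontrivial solution, and the Dr\'abek--Huang rates for $\int v_\eps^r$ versus the $O\big(\eps^{(N-p)/(p-1)}\big)$ error, which is exactly what produces the case split (i)--(iii). Two minor corrections: since \eqref{2} forces $\supp(b)\subset\supp(a)$, assumption \eqref{33} gives $b\equiv 0$ a.e.\ on $B_\rho(x_0)$, so the $b$-term vanishes identically along the ray and no absorption argument is needed (smallness of $b_\infty$ enters only through \eqref{15}); and the comparison level should be $\frac1N\, S_p^{N/p}/\mu^{(N-p)/p}$ rather than $\frac1N\big(\mu^{-(N-p)/p}S_p\big)^{N/p}$.
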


We remark that $(Np - 2N + p)\, p/(N - p)(p - 1) > p$ if and only if $N < p^2$, so the assumption required on $r$ in $(iii)$ is more restrictive than $r > p$. In particular, in the low dimensional case $p < N < p^2$, we do not have the existence of a nontrivial solution when $r$ is close to $p$.

For $b(x) \equiv 0$ and $\mu = 1$, Theorem \ref{Theorem 1} gives the following Brezis-Nirenberg type result for the double phase operator.

\begin{corollary} \label{Corollary 1}
Assume \eqref{2} and \eqref{33}. Then the problem
\begin{equation} \label{100}
\left\{\begin{aligned}
-\Da{u} & = \lambda\, |u|^{r-2}\, u + |u|^{p^\ast - 2}\, u && \text{in } \Omega\\[10pt]
u & = 0 && \text{on } \bdry{\Omega}
\end{aligned}\right.
\end{equation}
has a nontrivial weak solution in $W^{1,\,\A}_0(\Omega)$ in each of the following cases:
\begin{enumroman}
\item $N \ge p^2$, $r = p$, and $0 < \lambda < \lambda_1(p)$,
\item $N \ge p^2$, $p < r < p^\ast$, and $\lambda > 0$,
\item $N < p^2$, $(Np - 2N + p)\, p/(N - p)(p - 1) < r < p^\ast$, and $\lambda > 0$.
\end{enumroman}
\end{corollary}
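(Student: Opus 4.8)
The plan is to obtain this statement as the special case $b \equiv 0$, $\mu = 1$ of Theorem \ref{Theorem 1}. First I would observe that with these choices problem \eqref{6} reduces exactly to the problem in the corollary: the term $b(x)\, |u|^{q^\ast - 2}\, u$ drops out and $\mu\, |u|^{p^\ast - 2}\, u = |u|^{p^\ast - 2}\, u$, so a weak solution of \eqref{6} in this situation is precisely a function $u \in W^{1,\,\A}_0(\Omega)$ satisfying
\[
\int_\Omega \left(|\nabla u|^{p-2} + a(x)\, |\nabla u|^{q-2}\right) \nabla u \cdot \nabla v\, dx = \int_\Omega \left(\lambda\, |u|^{r-2} + |u|^{p^\ast - 2}\right) uv\, dx
\]
for all $v \in W^{1,\,\A}_0(\Omega)$. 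The range restrictions on $N$, $r$, and $\lambda$ in cases (i)--(iii) are verbatim the same as in Theorem \ref{Theorem 1}, so there is nothing to verify there.

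It then remains only to check that the remaining hypotheses of Theorem \ref{Theorem 1} are in force. Assumption \eqref{33} is part of the hypotheses of the corollary. With $b \equiv 0$ one has $\supp(b) = \emptyset$, so condition \eqref{2} reads $a_0 = \essinf_{x \in \emptyset}\, a(x) = +\infty > 0$ and is satisfied vacuously; in particular the hypothesis ``\eqref{2}'' listed in the corollary is automatic in this case. Finally $\mu = 1 > 0$, and $b_\infty = \norm[L^\infty(\Omega)]{0} = 0$, so the smallness requirement $b_\infty < b^\ast$ holds for whatever threshold $b^\ast > 0$ is produced by Theorem \ref{Theorem 1}. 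Hence Theorem \ref{Theorem 1} applies and furnishes a nontrivial weak solution $u \in W^{1,\,\A}_0(\Omega)$ in each of the three cases, which is exactly the assertion of the corollary. There is essentially no obstacle in this argument, since all the analytic content — the compactness analysis below the threshold of Proposition \ref{Proposition 3} and the mountain pass geometry — is already contained in Theorem \ref{Theorem 1}; the only point deserving a remark is that the constraint \eqref{2}, which is genuinely needed when $b \not\equiv 0$ (it underlies the estimate \eqref{4}), degenerates harmlessly to a trivial condition once $b$ is taken identically zero.
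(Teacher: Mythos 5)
Your proposal is correct and is exactly the paper's argument: the corollary is obtained by specializing Theorem \ref{Theorem 1} to $b \equiv 0$, $\mu = 1$, noting that \eqref{2} becomes vacuous and $b_\infty = 0$ trivially satisfies the smallness condition $b_\infty < b^\ast$. Nothing is missing.
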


\begin{remark}
The case $1 < r < p$ of problem \eqref{100} was considered in \cite{MR4441456}, where it was shown that there exist infinitely many negative energy solutions for all sufficiently small $\lambda > 0$.
\end{remark}

\begin{remark}
The existence of a nontrivial weak solution of the critical $p$-Laplacian problem
\begin{equation} \label{101}
\left\{\begin{aligned}
- \Delta_p u & = \lambda\, |u|^{r-2}\, u + |u|^{p^\ast - 2}\, u && \text{in } \Omega\\[10pt]
u & = 0 && \text{on } \bdry{\Omega}
\end{aligned}\right.
\end{equation}
in cases $(i)$-$(iii)$ has been proved in \cite{AzoreroPeral,ArioliGazzola}.
\end{remark}

\begin{remark}
The existence of a positive solution of problem \eqref{101} when $p < N < p^2$ and $r = p$ was recently proved in \cite{AngeloniEsposito}. It would be interesting to investigate possible extensions of this result to the double operator.
\end{remark}

Next we consider the problem
\begin{equation} \label{8}
\left\{\begin{aligned}
-\Da{u} & = c(x)\, |u|^{s-2}\, u + \mu\, |u|^{p^\ast - 2}\, u + b(x)\, |u|^{q^\ast - 2}\, u && \text{in } \Omega\\[10pt]
u & = 0 && \text{on } \bdry{\Omega},
\end{aligned}\right.
\end{equation}
where $p^\ast \le s < q^\ast$,  $\mu \ge 0$ is a parameter, $b\in C(\bar\Omega)\setminus\{0\}$ and $c \in L^\infty(\Omega)$ are nonnegative functions satisfying \eqref{2} and \eqref{eq:hp-on-c}, respectively. 

We note that since $b\not\equiv 0$, in the next theorem and its corollary, $b_\infty>0$. 

\begin{theorem} \label{Theorem 2}
Assume that \eqref{2} and \eqref{eq:hp-on-c} hold and there is a ball $B_\rho(x_0) \subset \Omega$ such that
\begin{equation} \label{9}
a(x) = a_0, \quad b(x) = b_\infty>0, \quad c(x) \ge c_0 \quad \text{for a.a.\! } x \in B_\rho(x_0)
\end{equation}
for some constant $c_0 > 0$. Then there exists $\mu^\ast > 0$ such that problem \eqref{8} has a nontrivial weak solution in $W^{1,\,\A}_0(\Omega)$ when $0 \le \mu < \mu^\ast$ in each of the following cases:
\begin{enumroman}
\item $1 < p < N(q - 1)/(N - 1)$ and $N^2 (q - 1)/(N - 1)(N - q) < s < q^\ast$,
\item $N(q - 1)/(N - 1) \le p < q$ and $Np/(N - q) < s < q^\ast$.
\end{enumroman}
\end{theorem}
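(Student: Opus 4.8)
The plan is to apply minimax theory to the energy functional of \eqref{8} and to turn a minimax sequence into a nontrivial critical point via Proposition \ref{Proposition 3}. Set
\[
\Phi(u):=\int_\Omega\Big[\tfrac1p\,|\nabla u|^p+\tfrac{a(x)}{q}\,|\nabla u|^q\Big]\,dx-\int_\Omega\Big[\tfrac{c(x)}{s}\,|u|^s+\tfrac{\mu}{p^\ast}\,|u|^{p^\ast}+\tfrac{b(x)}{q^\ast}\,|u|^{q^\ast}\Big]\,dx
\]
on $W^{1,\,\A}_0(\Omega)$. By the embeddings recalled in Section \ref{sec2}, Proposition \ref{prop:subcritical}, the assumptions $b,c\in L^\infty(\Omega)$ and \eqref{2}, \eqref{eq:hp-on-c}, the functional $\Phi$ is $C^1$ and its critical points are precisely the weak solutions of \eqref{8}. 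Writing $\norm{\cdot}$ for the norm of $W^{1,\A}_0(\Omega)$, we have $\int_\Omega[\tfrac1p|\nabla u|^p+\tfrac aq|\nabla u|^q]\ge\tfrac1q\,\norm{u}^q$ for $\norm{u}\le1$, while each of the three nonlinear terms is $o(\norm{u}^q)$ as $\norm{u}\to0$ because $s\ge p^\ast>q>p$ and $b,c\in L^\infty(\Omega)$; hence $0$ is a strict local minimum of $\Phi$, with $\Phi\ge\alpha>0$ on a small sphere. On the other hand, for $u_0\not\equiv0$ supported in $B_\rho(x_0)$ the term $-\tfrac{b_\infty}{q^\ast}\!\int|tu_0|^{q^\ast}$ (or, if $b_\infty=0$, the term $-\tfrac1s\!\int c\,|tu_0|^s$, which is active since $c\ge c_0>0$ on $B_\rho(x_0)$ and $s>q$) forces $\Phi(tu_0)\to-\infty$ as $t\to+\infty$; fixing such a $u_0$ and an $e$ on that ray with $\Phi(e)<0$, the mountain pass level $\beta$ of $\Phi$ between $0$ and $e$ satisfies $\beta\ge\alpha>0$. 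By Proposition \ref{Proposition 3}, when $0\le\mu<\mu^\ast$ a minimax sequence at a level strictly below the compactness threshold $\beta^\ast$ has a subsequence converging to a \emph{nontrivial} weak solution of \eqref{8} — this is exactly where \eqref{2}, through \eqref{4}, rules out the zero limit. The theorem therefore reduces to
\[
0<\beta<\beta^\ast,
\]
of which only the right-hand inequality remains.

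To prove $\beta<\beta^\ast$, since $t\mapsto tu_\eps$ (truncated to a long enough interval) is an admissible path, it suffices to find $\eps>0$ with $\sup_{t\ge0}\Phi(tu_\eps)<\beta^\ast$, where $u_\eps:=\eta\,U_\eps$: here $\eta$ is a fixed cutoff supported in $B_\rho(x_0)$ with $\eta\equiv1$ near $x_0$, and $U_\eps$ is the $\eps$-concentration of a Talenti-type extremal of the Sobolev inequality $S_q\norm[q^\ast]{v}^q\le\norm[q]{\nabla v}^q$ — the exponent $q^\ast$ (not $p^\ast$) being the relevant one since $a\equiv a_0>0$ on $B_\rho(x_0)$ by \eqref{9}. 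On $B_\rho(x_0)$ we have $a=a_0$, $b=b_\infty$, $c\ge c_0$, and $\Phi(tu_\eps)\to S_q^{N/q}\big(\tfrac{a_0}{q}t^q-\tfrac{b_\infty}{q^\ast}t^{q^\ast}\big)$ locally uniformly in $t\ge0$, the right-hand side having a positive interior maximum; hence for $\eps$ small $\sup_{t\ge0}\Phi(tu_\eps)$ is attained at some $t_\eps$ in a fixed compact subinterval of $(0,\infty)$. Eliminating the $q^\ast$-term by $\tfrac{d}{dt}\Phi(tu_\eps)\big|_{t_\eps}=0$ yields
\[
\begin{aligned}
\sup_{t\ge0}\Phi(tu_\eps)
&=\Big(\tfrac1p-\tfrac1{q^\ast}\Big)\,t_\eps^{\,p}\,\|\nabla u_\eps\|_p^p+\tfrac1N\,a_0\,t_\eps^{\,q}\,\|\nabla u_\eps\|_q^q\\[2pt]
&\quad-\Big(\tfrac1s-\tfrac1{q^\ast}\Big)\,t_\eps^{\,s}\!\int_{B_\rho(x_0)}\!c\,|u_\eps|^s\,dx-\mu\Big(\tfrac1{p^\ast}-\tfrac1{q^\ast}\Big)\,t_\eps^{\,p^\ast}\,\|u_\eps\|_{p^\ast}^{p^\ast}.
\end{aligned}
\]
In this identity the $q$-term equals, at leading order, the threshold value $\beta^\ast$ (up to the truncation defect $\|\nabla u_\eps\|_q^q=S_q^{N/q}+O(\eps^{(N-q)/(q-1)})$), the last two terms are $\le0$, and the only unfavourable correction is the $p$-Dirichlet term. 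One then estimates $\|\nabla u_\eps\|_p^p$, which is of order $\eps^{N(q-p)/q}$ if $p\ge N(q-1)/(N-1)$ and of order $\eps^{p(N-q)/(q(q-1))}$ if $p<N(q-1)/(N-1)$, against a lower bound for $\int_{B_\rho(x_0)}c\,|u_\eps|^s$ (using $c\ge c_0$) by a positive power of $\eps$ that decreases with $s$. The parameter ranges in (i)–(ii) are exactly those guaranteeing that this negative $s$-contribution is of strictly smaller $\eps$-order than every positive contribution, so that $\sup_{t\ge0}\Phi(tu_\eps)<\beta^\ast$ for $\eps$ small; in case (i) the bound $\mu<\mu^\ast$ is used in addition, to keep $\beta^\ast$ above this (bounded) quantity.

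Once $0<\beta<\beta^\ast$ is established, the mountain pass theorem gives a minimax sequence at level $\beta$, which Proposition \ref{Proposition 3} converts into a nontrivial critical point of $\Phi$, i.e.\ a nontrivial weak solution of \eqref{8}; this completes the proof. The main obstacle is the computation outlined above: carrying out simultaneously the $\eps$-expansions of $\|\nabla u_\eps\|_p^p$, $\|u_\eps\|_{p^\ast}^{p^\ast}$, $\|u_\eps\|_{q^\ast}^{q^\ast}$, $\int_{B_\rho(x_0)}c\,|u_\eps|^s$ and of the $q$-truncation defect, keeping track of the implicitly defined maximizer $t_\eps$, and verifying that the resulting inequalities between $\eps$-exponents reproduce exactly the ranges (i)–(ii). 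It is the inhomogeneity of $\Da{}$ — which precludes any closed form for $\beta^\ast$ or for $\max_t\Phi(tu_\eps)$ — that makes this bookkeeping heavy and forces the split according to whether $p$ lies above or below $N(q-1)/(N-1)$.
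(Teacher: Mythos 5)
Your overall strategy coincides with the paper's: mountain pass geometry for the energy of \eqref{8}, reduction to the inequality $\max_{t\ge0}E(tu_0)<\beta^\ast(\mu,b_\infty)$ along a ray, use of Proposition \ref{Proposition 3} to extract a nontrivial solution, and of the asymptotic estimate \eqref{16} (this is where $\mu<\mu^\ast$ enters, in \emph{both} cases, not only in (i)) to replace $\beta^\ast$ by the explicit level $\frac1N(a_0S_q)^{N/q}/b_\infty^{(N-q)/q}$; the test functions are $q$-Talenti bubbles cut off inside the ball where \eqref{9} holds, and the $q^\ast$-term is eliminated at the maximizer exactly as in the paper's Lemma \ref{Lemma 4}.

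However, there is a genuine gap in case (i): you work with a \emph{fixed} cutoff $\eta$ and a single concentration parameter $\eps$, whereas the paper needs the two-parameter family $v_{\eps,\delta}$ with cutoff $\psi(x/\delta)$ and the choice $\delta=\eps^\kappa$, $\underline{\kappa}<\kappa<\overline{\kappa}$. With a fixed cutoff ($\delta=1$), the estimates \eqref{215}--\eqref{216} give, for $p<N(q-1)/(N-1)$ and $s>N(q-1)/(N-q)$,
\[
\int_\Omega|\nabla u_\eps|^p\,dx\sim\eps^{\frac{(N-q)p}{q(q-1)}},\qquad \int_\Omega u_\eps^s\,dx\sim\eps^{\frac{Nq-(N-q)s}{q}},
\]
so the required smallness $\int|\nabla u_\eps|^p\,dx=o\!\left(\int u_\eps^s\,dx\right)$ (the first limit in \eqref{222}, which is what lets the negative $s$-contribution absorb the unfavourable $p$-Dirichlet correction) forces
\[
s>q^\ast-\frac{p}{q-1},
\]
while the hypothesis of case (i) only gives $s>\frac{N^2(q-1)}{(N-1)(N-q)}=q^\ast-\frac{N}{N-1}$. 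Since $p<N(q-1)/(N-1)$ means $\frac{p}{q-1}<\frac{N}{N-1}$, the threshold $q^\ast-\frac{p}{q-1}$ is \emph{strictly larger}, so for $s$ in the window $\bigl(q^\ast-\frac{N}{N-1},\,q^\ast-\frac{p}{q-1}\bigr]$ your construction does not yield $\max_{t\ge0}E(tu_\eps)<\beta^\ast$, and your claim that the ranges (i)--(ii) ``are exactly those guaranteeing'' the needed exponent comparison is false for a fixed cutoff. The missing idea is to shrink the cutoff radius with $\eps$: taking $\delta=\eps^\kappa$ multiplies the $p$-gradient term by $\delta^{[N(q-1)-(N-1)p]/(q-1)}$ (a gain precisely when $p<N(q-1)/(N-1)$) at the price of a truncation defect $O\bigl((\eps/\delta)^{(N-q)/(q-1)}\bigr)$ in \eqref{214}, and the compatibility $\underline{\kappa}<\kappa<\overline{\kappa}$ of these two constraints is exactly equivalent to the hypotheses of case (i). Your argument as written does prove case (ii) (where the paper itself takes $\delta=1$) and case (i) only in the smaller range $s>q^\ast-p/(q-1)$.
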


In particular, Theorem \ref{Theorem 2} gives the following corollary for $\mu = 0$.

\begin{corollary}
Assume \eqref{2}, \eqref{eq:hp-on-c}, and \eqref{9}. Then the problem
\[
\left\{\begin{aligned}
-\Da{u} & = c(x)\, |u|^{s-2}\, u + b(x)\, |u|^{q^\ast - 2}\, u && \text{in } \Omega\\[10pt]
u & = 0 && \text{on } \bdry{\Omega}
\end{aligned}\right.
\]
has a nontrivial weak solution in $W^{1,\,\A}_0(\Omega)$ in each of the following cases:
\begin{enumroman}
\item $1 < p < N(q - 1)/(N - 1)$ and $N^2 (q - 1)/(N - 1)(N - q) < s < q^\ast$,
\item $N(q - 1)/(N - 1) \le p < q$ and $Np/(N - q) < s < q^\ast$.
\end{enumroman}
\end{corollary}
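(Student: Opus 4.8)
The plan is to read off this corollary as the special case $\mu = 0$ of Theorem \ref{Theorem 2}. The hypotheses \eqref{2}, \eqref{eq:hp-on-c}, and \eqref{9} appear verbatim in both statements, and the restrictions on $p$, $q$, $s$ in cases (i) and (ii) of the corollary are exactly those of Theorem \ref{Theorem 2}, so nothing there needs to be re-derived. The only two points to verify are that the value $\mu = 0$ is admissible in Theorem \ref{Theorem 2} and that the remaining hypothesis ``$b_\infty > 0$'' of that theorem is automatically satisfied here.

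For the first point, Theorem \ref{Theorem 2} produces a nontrivial weak solution of problem \eqref{8} whenever $0 \le \mu < \mu^\ast$, where $\mu^\ast > 0$ depends only on the data; since $0 \in [0,\mu^\ast)$, this includes $\mu = 0$, and problem \eqref{8} with $\mu = 0$ is precisely the problem in the corollary. For the second point, condition \eqref{2} is meaningful only when $\supp(b)$ is nonempty, so $b \not\equiv 0$ and hence $b_\infty = \norm[L^\infty(\Omega)]{b} > 0$, which is what Theorem \ref{Theorem 2} requires. One also checks in passing that the stated ranges of $s$ force $s > p^\ast$, so that problem \eqref{8} is genuinely of the form to which Theorem \ref{Theorem 2} applies: in case (ii) this follows from $Np/(N - q) > Np/(N - p) = p^\ast$ since $q > p$, and in case (i) a short computation shows $N^2(q - 1)/(N - 1)(N - q) \ge p^\ast$ under the hypothesis $p < N(q - 1)/(N - 1)$.

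There is no real obstacle here: the entire mathematical content sits in Theorem \ref{Theorem 2}, whose proof carries out the substantive work --- identifying the compactness threshold for the energy functional associated with \eqref{8} (see Proposition \ref{Proposition 3}), establishing the mountain-pass geometry of that functional on $W^{1,\,\A}_0(\Omega)$, and estimating the mountain-pass level strictly below the threshold by means of test functions concentrated in the ball $B_\rho(x_0)$ where, by \eqref{9}, the coefficients $a$, $b$, $c$ attain their extremal values. The corollary then follows by simply specializing to $\mu = 0$.
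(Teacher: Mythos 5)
Your proposal is correct and matches the paper exactly: the corollary is obtained simply by specializing Theorem \ref{Theorem 2} to $\mu = 0$, which lies in the admissible range $0 \le \mu < \mu^\ast$. Your additional checks (that \eqref{2} together with \eqref{9} presupposes $b \not\equiv 0$, hence $b_\infty > 0$, and that the ranges of $s$ indeed give $s \ge p^\ast$) are sound and only make explicit what the paper leaves implicit.
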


\subsection{Compactness results}

Proofs of Theorem \ref{Theorem 1} and Theorem \ref{Theorem 2} will be based on a new compactness result that we will prove for the general critical growth double phase problem
\begin{equation} \label{10}
\left\{\begin{aligned}
-\Da{u} & = \mu\, |u|^{p^\ast - 2}\, u + b(x)\, |u|^{q^\ast - 2}\, u + g(x,u) && \text{in } \Omega\\[10pt]
u & = 0 && \text{on } \bdry{\Omega},
\end{aligned}\right.
\end{equation}
where $g$ is a Carath\'{e}odory function on $\Omega \times \R$ satisfying the subcritical growth condition
\begin{equation} \label{11}
|g(x,t)| \le c_1 + c_2\, |t|^{r-1} + c(x)\, |t|^{s-1} \quad \text{for a.a.\! } x \in \Omega \text{ and all } t \in \R
\end{equation}
for some constants $c_1, c_2 > 0$, $1 < r < p^\ast \le s < q^\ast$, nonnegative functions $b \in C(\bar\Omega)$ and $c \in L^\infty(\Omega)$ satisfying \eqref{2} and \eqref{eq:hp-on-c}, respectively. The variational functional associated with this problem is
\[
E(u) := \int_\Omega \left[\frac{1}{p}\, |\nabla u|^p + \frac{a(x)}{q}\, |\nabla u|^q - \frac{\mu}{p^\ast}\, |u|^{p^\ast} - \frac{b(x)}{q^\ast}\, |u|^{q^\ast} - G(x,u)\right] dx, \quad u \in W^{1,\,\A}_0(\Omega),
\]
where $G(x,t) = \int_0^t g(x,\tau)\, d\tau$. We recall that $\seq{u_j} \subset W^{1,\,\A}_0(\Omega)$ is a \PS{\beta} sequence for $E$ if $E(u_j) \to \beta$ and $E'(u_j) \to 0$. To ensure that \PS{\beta} sequences are bounded, we assume that
\begin{equation} \label{19}
G(x,t) - \frac{t}{\sigma}\, g(x,t) \le \mu \left(\frac{1}{\sigma} - \frac{1}{p^\ast}\right) |t|^{p^\ast} + c_3 \quad \text{for a.a.\! } x \in \Omega \text{ and all } t \in \R
\end{equation}
for some constants $c_3 > 0$ and $q < \sigma < p^\ast$. This technical assumption may be replaced with the condition
\[
\frac{c^{q^\ast/(q^\ast - s)}}{b^{s/(q^\ast - s)}} \in L^1(\Omega).
\]
However, we prefer not to impose this condition since the nonlinearities in problems \eqref{6} and \eqref{8} satisfy \eqref{19} without further assumptions on $c$.

We will show that there exists a threshold level for $\beta$ below which every \PS{\beta} sequence has a subsequence that converges weakly to a nontrivial weak solution of problem \eqref{10}. Although we do not have a closed form formula for this threshold, we can characterize it variationally as follows. Let
\begin{equation} \label{26}
I(X,Y,Z,W) := \frac{1}{p}\, X + \frac{1}{q}\, Y - \frac{1}{p^\ast}\, Z - \frac{1}{q^\ast}\, W, \quad X, Y, Z, W \ge 0
\end{equation}
and let
\begin{equation} \label{5}
S_p := \inf_{u \in \D^{1,\,p}(\R^N) \setminus \set{0}}\, \frac{\dint_{\R^N} |\nabla u|^p\, dx}{\left(\dint_{\R^N} |u|^{p^\ast} dx\right)^{p/p^\ast}}, \qquad S_q := \inf_{u \in \D^{1,\,q}(\R^N) \setminus \set{0}}\, \frac{\dint_{\R^N} |\nabla u|^q\, dx}{\left(\dint_{\R^N} |u|^{q^\ast} dx\right)^{q/q^\ast}}
\end{equation}
be the best Sobolev constants for $\D^{1,\,m}(\R^N)=\{u\in L^{m^\ast}(\R^N)\,:\,|\nabla u|\in L^m(\R^N)\}$, with $m=p,\,q$. Denote by $S(\mu,b_\infty)$ the set of points $(X,Y,Z,W) \in \R^4$ with $X, Y, Z, W \ge 0$ satisfying
\begin{gather}
\label{21} I(X,Y,Z,W) > 0,\\[10pt]
\label{22} X + Y = Z + W,\\[10pt]
\label{23} Z \le \frac{\mu}{S_p^{p^\ast/p}}\, X^{p^\ast/p}, \qquad W \le \frac{b_\infty}{(a_0\, S_q)^{q^\ast/q}}\, Y^{q^\ast/q},
\end{gather}
and set
\[
\beta^\ast(\mu,b_\infty) := \inf_{(X,Y,Z,W) \in S(\mu,b_\infty)}\, I(X,Y,Z,W).
\]

\begin{proposition} \label{Proposition 3}
Assume \eqref{2}, \eqref{eq:hp-on-c}, \eqref{11}, and \eqref{19}. If
\[
0 < \beta < \beta^\ast(\mu,b_\infty),
\]
then every {\em \PS{\beta}} sequence for $E$ has a subsequence that converges weakly to a nontrivial weak solution of problem \eqref{10}.
\end{proposition}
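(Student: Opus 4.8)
The plan is to follow the classical concentration-compactness scheme of Lions, adapted to the Musielak-Orlicz setting and to the inhomogeneous double phase operator. Let $\seq{u_j}$ be a \PS{\beta} sequence. First I would establish boundedness of $\seq{u_j}$ in $W^{1,\,\A}_0(\Omega)$: combining $E(u_j) - \frac{1}{\sigma}\, E'(u_j)\, u_j \to \beta$ with the structural hypothesis \eqref{19} and the fact that $q < \sigma < p^\ast$, the terms $\left(\frac{1}{p} - \frac{1}{\sigma}\right) \int |\nabla u_j|^p + \left(\frac{1}{q} - \frac{1}{\sigma}\right) \int a(x)\, |\nabla u_j|^q$ control the modular of $u_j$ from above (the critical $p^\ast$-term cancels against the right-hand side of \eqref{19}, and the $q^\ast$- and $g$-contributions are absorbed), which yields boundedness of the modular and hence of the norm. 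Passing to a subsequence, $u_j \wto u$ in $W^{1,\,\A}_0(\Omega)$, $u_j \to u$ in $L^p(\Omega)$ and a.e., and the gradients $|\nabla u_j|^p\, dx$, $a(x)\, |\nabla u_j|^q\, dx$, $\mu\, |u_j|^{p^\ast}\, dx$, $b(x)\, |u_j|^{q^\ast}\, dx$ converge weakly-$\ast$ as measures; by the concentration-compactness lemma (applied separately to the $p$-structure with exponents $p, p^\ast$ and to the $q$-structure on $\supp b$ where $a \ge a_0 > 0$ by \eqref{2}, using the Sobolev constants $S_p$ and $a_0 S_q$) there is an at most countable set of concentration points $\set{x_k}$ carrying atoms of masses $X_k, Y_k$ (gradient measures) and $Z_k, W_k$ (critical measures) satisfying the reverse-Hölder inequalities in \eqref{23} at each point, plus $X_k + Y_k = Z_k + W_k$ by testing $E'(u_j)$ against a cutoff of $u_j$ near $x_k$.

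Next I would derive an energy decomposition: using the Brezis-Lieb lemma on the $L^{p^\ast}$ and weighted $L^{q^\ast}$ terms, and the fact that the subcritical part $g$ (which satisfies \eqref{11} with the structural condition \eqref{eq:hp-on-c} making the $c(x)|t|^{s-1}$ term subcritical in the Musielak-Orlicz sense by Proposition \ref{prop:subcritical}) passes to the limit by compactness, one gets
\[
\beta = E(u) + \sum_k I(X_k, Y_k, Z_k, W_k),
\]
where $E(u) \ge 0$: indeed $E(u) = E(u) - \frac{1}{\sigma} E'(u)\, u$ since $u$ is a weak solution, and the same algebra using \eqref{19} shows this quantity is nonnegative (here one uses $u \in W^{1,\,\A}_0(\Omega)$ and that the concentration points escape the regular part). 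Each nontrivial atom $(X_k, Y_k, Z_k, W_k)$ with $X_k + Y_k > 0$ satisfies \eqref{21}--\eqref{23}: constraint \eqref{22} and the inequalities \eqref{23} are exactly what concentration-compactness gives, and $I(X_k, Y_k, Z_k, W_k) > 0$ because $I(X_k,Y_k,Z_k,W_k) = \left(\frac1p - \frac{1}{\sigma}\right) X_k + \left(\frac1q - \frac{1}{\sigma}\right) Y_k + \left(\frac1\sigma - \frac{1}{p^\ast}\right) Z_k + \left(\frac1\sigma - \frac{1}{q^\ast}\right) W_k > 0$ using $q < \sigma < p^\ast < q^\ast$ and $X_k + Y_k = Z_k + W_k > 0$ (if $X_k + Y_k = 0$ all four vanish and the atom is trivial). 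Hence each nontrivial atom contributes at least $\beta^\ast(\mu, b_\infty)$ to the sum, so $\beta \ge E(u) + \beta^\ast(\mu,b_\infty) \ge \beta^\ast(\mu,b_\infty)$ if any atom is nontrivial, contradicting $\beta < \beta^\ast(\mu,b_\infty)$. Therefore there are no nontrivial atoms, the critical terms converge strongly, and $u_j \to u$ strongly in $W^{1,\,\A}_0(\Omega)$ along the subsequence; in particular $E'(u) = 0$, so $u$ is a weak solution.

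It remains to show $u \neq 0$. Here I would use the hypotheses in the specific applications, but in the abstract setting of Proposition \ref{Proposition 3} the argument is: suppose $u = 0$. Then $\beta = \sum_k I(X_k,Y_k,Z_k,W_k)$ with at least one nontrivial atom (since $\beta > 0$), but we just showed each such contributes $\ge \beta^\ast$, forcing $\beta \ge \beta^\ast$, again a contradiction. Thus $u \neq 0$, completing the proof. The estimate \eqref{4} referenced in the introduction (which uses \eqref{2}) enters precisely in comparing the local $p$- and $q$-Sobolev constants at the concentration points on $\supp b$, ensuring the constant $(a_0 S_q)^{q^\ast/q}$ in \eqref{23} is the correct one.

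\textbf{Main obstacle.} The delicate point is the concentration-compactness analysis itself in the Musielak-Orlicz framework: one must justify that the critical growth is genuinely governed by $t^{p^\ast}$ where $a = 0$ and by $t^{q^\ast}$ where $a > 0$ (this is the content of Fan's embedding and Proposition \ref{prop:subcritical}), and correspondingly that the reverse-Hölder inequalities at concentration points split into the two clean constraints in \eqref{23} with the right constants $S_p^{p^\ast/p}$ and $(a_0 S_q)^{q^\ast/q}$ — rather than some entangled Musielak-Orlicz quantity. Handling the cross-interaction between the $p$- and $q$-parts of the operator at a single concentration point, and verifying that the weight $a(x)$ can be replaced by its essential infimum $a_0$ on $\supp b$ in the limiting inequality, is where the bulk of the technical work lies; the compensated-compactness-type identity $X_k + Y_k = Z_k + W_k$ also requires care because testing with $u_j \varphi$ for a cutoff $\varphi$ produces gradient terms of \emph{both} homogeneities $p$ and $q$ that must be tracked simultaneously.
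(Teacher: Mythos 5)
Your proposal takes a genuinely different and much heavier route than the paper, and as written it has real gaps. The paper's proof is elementary by comparison: after the boundedness step (which you do essentially as in the paper), it shows that the weak limit $u$ is a weak solution of \eqref{10} by a dedicated argument — a truncation lemma giving a.e.\ convergence of $\nabla u_j$ (Lemma \ref{Lemma 0}, via the criterion of de Valeriola--Willem) combined with weak continuity of each term (Lemma \ref{Lemma 1}) — and then proves $u \neq 0$ by contradiction \emph{globally}: if $u = 0$, the subcritical terms vanish by the compact embedding of Proposition \ref{prop:subcritical}, the four global limits $X = \lim \int_\Omega |\nabla u_j|^p$, $Y = \lim \int_\Omega a |\nabla u_j|^q$, $Z = \lim \mu \int_\Omega |u_j|^{p^\ast}$, $W = \lim \int_\Omega b |u_j|^{q^\ast}$ satisfy \eqref{22} from \eqref{13}, satisfy \eqref{23} from the already-proved global inequalities \eqref{3}--\eqref{4} of Proposition \ref{prop:emb-cont-crit}, and $\beta = I(X,Y,Z,W)$ from \eqref{12}, so $\beta \ge \beta^\ast(\mu,b_\infty)$, a contradiction. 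No concentration--compactness, no atoms, no Brezis--Lieb splitting, and no strong convergence are needed — indeed the Proposition only claims weak convergence to a nontrivial solution. In your scheme, by contrast, everything (including the identification $E'(u)=0$, which you deduce only at the end from strong convergence) hinges on a Lions-type decomposition in the Musielak--Orlicz double phase setting: the local reverse H\"older inequalities with the precise constants $S_p^{p^\ast/p}$ and $(a_0 S_q)^{q^\ast/q}$, the per-atom identity $X_k+Y_k=Z_k+W_k$, and the energy splitting $\beta = E(u) + \sum_k I(X_k,Y_k,Z_k,W_k)$. You flag this machinery as the "main obstacle" but never establish it; since the nontriviality of $u$ in your argument rests entirely on it, the proof is incomplete. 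You also leave unaddressed the passage to the limit in the nonlinear gradient terms needed to show $u$ solves \eqref{10}, which in the paper requires the separate a.e.-gradient-convergence lemma.

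Two specific steps would fail or are circular even granting the decomposition. First, your claim $E(u) \ge 0$ via $E(u) = E(u) - \tfrac{1}{\sigma}E'(u)\,u$ and \eqref{19} is not valid: \eqref{19} carries the additive constant $c_3 > 0$, so this computation only yields $E(u) \ge -c_3\vol{\Omega}$, and moreover at that stage of your argument you have not yet shown $E'(u)=0$. This undercuts the "no nontrivial atoms, hence strong convergence" conclusion. Second, your final nontriviality step (assume $u=0$, so $\beta = \sum_k I_k$ with at least one atom, hence $\beta \ge \beta^\ast$) is the right idea and is essentially the paper's argument, but carried out at the level of atoms it still needs the unproved local versions of \eqref{23}; the paper's observation is precisely that under $u=0$ the \emph{global} inequalities \eqref{3}--\eqref{4} already place the single global quadruple $(X,Y,Z,W)$ in $S(\mu,b_\infty)$, which is why $\beta^\ast$ is defined through global constraints and why no localization is required.
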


We will prove Theorem \ref{Theorem 1} and Theorem \ref{Theorem 2} by combining this compactness result with the following asymptotic estimates for $\beta^\ast(\mu,b_\infty)$.

\begin{proposition} \label{Proposition 4}
For $\mu > 0$,
\begin{equation} \label{15}
\beta^\ast(\mu,b_\infty) \ge \frac{1}{N}\, \frac{S_p^{N/p}}{\mu^{(N-p)/p}} + \o(1) \quad \text{as } b_\infty \to 0.
\end{equation}
For $b_\infty > 0$,
\begin{equation} \label{16}
\beta^\ast(\mu,b_\infty) \ge \frac{1}{N}\, \frac{(a_0\, S_q)^{N/q}}{b_\infty^{(N-q)/q}} + \o(1) \quad \text{as } \mu \to 0.
\end{equation}
\end{proposition}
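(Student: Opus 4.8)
The plan is to deduce \eqref{15} and \eqref{16} from a single lower semicontinuity statement: as $b_\infty\to 0$ (resp.\ $\mu\to 0$) the admissible set $S(\mu,b_\infty)$ degenerates onto the admissible set of the ``pure $p$'' (resp.\ ``pure $q$'') Brezis--Nirenberg problem, whose threshold is the Aubin--Talenti constant $\tfrac1N S_p^{N/p}/\mu^{(N-p)/p}$ (resp.\ $\tfrac1N (a_0 S_q)^{N/q}/b_\infty^{(N-q)/q}$), and I will show $\beta^\ast$ cannot drop below it in the limit. The key preliminary is an a priori bound on $S(\mu,b_\infty)$: adding the two inequalities in \eqref{23} and using \eqref{22}, every $(X,Y,Z,W)\in S(\mu,b_\infty)$ obeys
\[
X+Y=Z+W\le\frac{\mu}{S_p^{p^\ast/p}}\,X^{p^\ast/p}+\frac{b_\infty}{(a_0\,S_q)^{q^\ast/q}}\,Y^{q^\ast/q}\le\frac{\mu}{S_p^{p^\ast/p}}\,(X+Y)^{p^\ast/p}+\frac{b_\infty}{(a_0\,S_q)^{q^\ast/q}}\,(X+Y)^{q^\ast/q},
\]
and since $X+Y>0$ (otherwise the quadruple vanishes and \eqref{21} fails), dividing by $X+Y$ gives $1\le\frac{\mu}{S_p^{p^\ast/p}}(X+Y)^{(p^\ast-p)/p}+\frac{b_\infty}{(a_0S_q)^{q^\ast/q}}(X+Y)^{(q^\ast-q)/q}$. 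As the exponents are positive, the right-hand side tends to $0$ as $X+Y\to0^+$, so $X+Y$ is bounded below by a positive constant on $S(\mu,b_\infty)$; and if $X+Y$ stays bounded while $b_\infty\to0$ (resp.\ $\mu\to0$ with $b_\infty$ fixed), it must satisfy $\liminf(X+Y)\ge(S_p^{p^\ast/p}/\mu)^{p/(p^\ast-p)}=S_p^{N/p}/\mu^{(N-p)/p}$ (resp.\ $\liminf(X+Y)\ge(a_0S_q)^{N/q}/b_\infty^{(N-q)/q}$), using the exponent arithmetic $p/(p^\ast-p)=(N-p)/p$, $p^\ast/p=N/(N-p)$ and its $q$-analogue.

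\textbf{Coercive form of $I$.} Next I would rewrite $I$ from \eqref{26}: eliminating $Z$ via \eqref{22} gives $I=\tfrac1N X+(\tfrac1q-\tfrac1{p^\ast})Y+(\tfrac1{p^\ast}-\tfrac1{q^\ast})W$, where all three coefficients are positive because $\tfrac1p-\tfrac1{p^\ast}=\tfrac1N$, $q<p^\ast$ (a consequence of $q/p<1+1/N$), and $p^\ast<q^\ast$ (from $p<q$); symmetrically, eliminating $W$ gives $I=\tfrac1N Y+(\tfrac1p-\tfrac1{q^\ast})X+(\tfrac1{q^\ast}-\tfrac1{p^\ast})Z$, with positive coefficients on $X$ and $Y$. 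Hence $I\ge\tfrac1N X$ on all of $S(\mu,b_\infty)$, the quantities $X$, $Y$, $W$ — and therefore $Z=X+Y-W$ — are all controlled by $I$, and $I\ge\tfrac1N Y$ whenever $Z=0$.

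\textbf{Passage to the limit.} To prove \eqref{15}, fix $\mu>0$, set $\ell:=\liminf_{b_\infty\to0^+}\beta^\ast(\mu,b_\infty)$ (nothing to prove if $\ell=+\infty$), choose $b_n\to0^+$ with $\beta^\ast(\mu,b_n)\to\ell$ and $P_n=(X_n,Y_n,Z_n,W_n)\in S(\mu,b_n)$ with $I(P_n)\le\beta^\ast(\mu,b_n)+\tfrac1n$, so $I(P_n)\to\ell$. By the coercive form the $P_n$ are bounded, so up to a subsequence $P_n\to P_\infty=(\bar X,\bar Y,\bar Z,\bar W)$ with nonnegative entries and (by linearity of $I$) $I(P_\infty)=\ell$. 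Passing to the limit in \eqref{22} and \eqref{23}, using $b_n\to0$ and the boundedness of $Y_n$, I get $\bar W=0$ and $\bar X+\bar Y=\bar Z\le\frac{\mu}{S_p^{p^\ast/p}}\bar X^{p^\ast/p}$; by the a priori bound $\bar X+\bar Y\ge X_0:=S_p^{N/p}/\mu^{(N-p)/p}>0$, so $P_\infty\ne0$. Then $\bar X+\bar Y\le\frac{\mu}{S_p^{p^\ast/p}}\bar X^{p^\ast/p}$ together with $\bar X+\bar Y\ge X_0$ forces $\bar X\ge X_0$ (if $0<\bar X<X_0$ then $\frac{\mu}{S_p^{p^\ast/p}}\bar X^{p^\ast/p}<\bar X\le\bar X+\bar Y$, impossible, and $\bar X=0$ would give $P_\infty=0$), whence $\ell=I(P_\infty)\ge\tfrac1N\bar X\ge\tfrac1N X_0$, i.e.\ \eqref{15}. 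Estimate \eqref{16} follows in the same way with $(p,\mu,X,Z,S_p)$ and $(q,b_\infty,Y,W,a_0S_q)$ interchanged: the limit point then satisfies $\bar Z=0$ and $\bar X+\bar Y=\bar W\le\frac{b_\infty}{(a_0S_q)^{q^\ast/q}}\bar Y^{q^\ast/q}$, the a priori bound gives $\bar X+\bar Y\ge Y_0:=(a_0S_q)^{N/q}/b_\infty^{(N-q)/q}$, this forces $\bar Y\ge Y_0$, and the $Z=0$ case of the coercive form yields $I(P_\infty)\ge\tfrac1N\bar Y\ge\tfrac1N Y_0$.

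\textbf{Main obstacle.} The delicate point is that, a priori, the near-optimal sequence $P_n$ could converge to the trivial quadruple $P_\infty=0$, at which $I=0$, which would collapse the argument. The a priori inequality from the first step is precisely what excludes this and, beyond that, what pins the limiting admissible point onto the correct face of the limiting constraint set ($\bar W=0$ for \eqref{15}, $\bar Z=0$ for \eqref{16}), where $I$ is bounded below by the sharp Aubin--Talenti constant. Checking that this bound survives the limit with the correct constant, and keeping straight the exponent bookkeeping among $p,q,p^\ast,q^\ast,N$, is the part that requires care; the remainder is the routine extraction of a convergent subsequence and a passage to the limit in linear and continuous quantities.
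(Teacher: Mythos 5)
Your argument is correct, and it shares with the paper's proof the same two algebraic pillars: combining \eqref{22} with \eqref{23} to produce a ``no-collapse'' inequality (your bound on $X+Y$; the paper's \eqref{25}--\eqref{27}, which keeps $X_j$ and $Y_j$ separate), and eliminating $Z$ (resp.\ $W$) from \eqref{26} via \eqref{22} to get the representations with coefficient $\tfrac1N$ on $X$ (resp.\ on $Y$), which are exactly the first lines of \eqref{28} and \eqref{29}. Where you genuinely diverge is in how the limit is organized. You run a soft compactness argument: near-minimizers along $b_n\to0$ (resp.\ $\mu_n\to0$) are bounded by the coercive form of $I$, a subsequential limit $P_\infty$ exists, the constraint \eqref{23} forces it onto the face $\bar W=0$ (resp.\ $\bar Z=0$), and the no-collapse inequality pins $\bar X$ (resp.\ $\bar Y$) at or above the Aubin--Talenti value, giving the liminf statements, which are equivalent to \eqref{15}--\eqref{16}. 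The paper instead works with a minimizing sequence at each fixed $(\mu,b_\infty)$ and derives a quantitative lower bound, essentially $I\ge\tfrac1N S_p^{N/p}\mu^{-(N-p)/p}\bigl[1-Cb_\infty\bigr]^{(N-p)/p}$, where the uniform-in-$b_\infty$ bound on $Y_j$ comes from the monotonicity $S(\mu,0)\subset S(\mu,b_\infty)$, i.e.\ $\beta^\ast(\mu,b_\infty)\le\beta^\ast(\mu,0)$, rather than from coercivity along a chosen sequence (and analogously for \eqref{29} with $X_j$ and $\mu$). The trade-off: your route avoids tracking error terms and handles the two estimates symmetrically by a single limiting scheme, while the paper's direct estimate yields, in principle, an explicit rate in $b_\infty$ (resp.\ in $\mu$) rather than just an unquantified $o(1)$. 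Minor polish if you write it up: note explicitly that $S(\mu,b_n)$ is nonempty whenever $\beta^\ast(\mu,b_n)<\infty$ (so near-minimizers exist), and that $q<p^\ast$, which you correctly attribute to $q/p<1+1/N$, is what makes the coefficient of $Y$ positive in the coercive form.
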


\begin{remark}
In Theorem \ref{Theorem 1}, $b_\infty$ is small and $\mu>0$ is arbitrary, while in Theorem~\ref{Theorem 2}, $\mu$ is small and $b_\infty$ is arbitrary. Results when both $\mu$ and $b_\infty$ are small can be found in \cite[Theorems 2.5 and 2.6]{HaHo}. We note that such results are easier to obtain as Proposition~\ref{Proposition 4} implies that $\beta^\ast(\mu,b_\infty)\to +\infty$ as $\mu, \,b_\infty\to 0$.
\end{remark}

\subsection{Nonexistence results}

We also prove a Poho\v{z}aev type identity for problem \eqref{1}.
\begin{theorem}\label{thm:poho}
Let $\Omega\subset\mathbb R^N$ be a bounded $C^1$-domain, \eqref{eq:pq} hold, and $0\le a \in C^1(\Omega)$. If $u\in W_0^{1,A}(\Omega)\cap W^{2,\A}(\Omega)$ is a weak solution of \eqref{1}, then we have the identity
\begin{equation}\label{eq:poho}
\begin{aligned}
\left(\frac{1}{p}-\frac{1}{q}\right)\int_\Omega|\nabla u|^p\, dx &+\frac{1}{Nq}\int_\Omega|\nabla u|^q(\nabla a\cdot x)\,dx\\
&+\frac{1}{N}\int_{\partial\Omega}\left[\left(1-\frac{1}{p}\right)|\partial_\nu u|^p+\left(1-\frac{1}{q}\right)a(x)|\partial_\nu u|^q\right](x\cdot\nu)\,d\sigma\\
&\hspace{5cm} = \int_\Omega\left[F(x,u)-\frac{1}{q^\ast}f(x,u)u\right]dx,
\end{aligned}
\end{equation}
where $F(x,t):=\int_0^t f(x,\tau)d\tau$, and $\nu$ is the outward unit normal to $\partial\Omega$.
\end{theorem}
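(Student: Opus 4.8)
\textbf{Proof proposal for Theorem \ref{thm:poho} (Pohožaev identity).}

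The plan is to derive the identity by testing the equation with the Rellich–Pohožaev multiplier $\nabla u \cdot x$ and integrating by parts, exploiting the $C^1$ regularity of $\Omega$ and the assumed $W^{2,\A}$-regularity of $u$ to justify all manipulations. First I would multiply the equation $\Da u = f(x,u)$ by $\nabla u\cdot x$ and integrate over $\Omega$. On the left-hand side, writing the operator as $-\divg\big(|\nabla u|^{p-2}\nabla u + a(x)|\nabla u|^{q-2}\nabla u\big)$, an integration by parts produces a boundary term plus a bulk term $\int_\Omega\big(|\nabla u|^{p-2}\nabla u + a|\nabla u|^{q-2}\nabla u\big)\cdot\nabla(\nabla u\cdot x)\,dx$. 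Since $u\in W^{2,\A}(\Omega)$ one has $\nabla(\nabla u\cdot x) = \nabla u + (D^2u)\,x$ in the appropriate sense, so this bulk term splits into a ``zeroth order'' piece $\int_\Omega\big(|\nabla u|^p + a|\nabla u|^q\big)\,dx$ and a ``Hessian'' piece involving $\big(|\nabla u|^{p-2}\nabla u + a|\nabla u|^{q-2}\nabla u\big)\cdot (D^2u)x$.

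The key computation is to recognize the Hessian piece as a divergence. One has the pointwise identity
\[
\big(|\nabla u|^{p-2}\nabla u\big)\cdot (D^2u)\,x = \frac{1}{p}\,\nabla\big(|\nabla u|^p\big)\cdot x = \frac{1}{p}\,\divg\big(|\nabla u|^p\,x\big) - \frac{N}{p}\,|\nabla u|^p,
\]
and similarly, since $a$ is no longer constant,
\[
a(x)\big(|\nabla u|^{q-2}\nabla u\big)\cdot (D^2u)\,x = \frac{a(x)}{q}\,\nabla\big(|\nabla u|^q\big)\cdot x = \frac{1}{q}\,\divg\big(a(x)|\nabla u|^q\,x\big) - \frac{N}{q}\,a(x)|\nabla u|^q - \frac{1}{q}\,|\nabla u|^q(\nabla a\cdot x).
\]
Applying the divergence theorem to the two divergence terms yields the boundary integrals; on $\bdry\Omega$, since $u$ vanishes there, $\nabla u = (\partial_\nu u)\,\nu$ and $\nabla u\cdot x$ restricted to the boundary reduces to $(\partial_\nu u)(x\cdot\nu)$, which is how the boundary terms collapse to the stated form with $|\partial_\nu u|^p$ and $|\partial_\nu u|^q$ weighted by $x\cdot\nu$. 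Collecting the bulk terms, the left-hand side becomes $\big(\tfrac{1}{p}-\tfrac{1}{q}\cdot 0\big)\cdots$ — more precisely one combines the $+\int(|\nabla u|^p + a|\nabla u|^q)$ from $\nabla u$ with the $-\tfrac{N}{p}\int|\nabla u|^p - \tfrac{N}{q}\int a|\nabla u|^q$ and the $-\tfrac1q\int|\nabla u|^q(\nabla a\cdot x)$; after dividing through by $N$ and regrouping, the coefficient of $\int|\nabla u|^p$ is $\tfrac1p-\tfrac1q$ once one also uses the equation tested against $u$ itself (the weak formulation with $v=u$ gives $\int(|\nabla u|^p + a|\nabla u|^q)\,dx = \int f(x,u)u\,dx$) to eliminate the $\int a|\nabla u|^q$ term in favor of $\int|\nabla u|^p$ and $\int f(x,u)u$. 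On the right-hand side, $\int_\Omega f(x,u)(\nabla u\cdot x)\,dx = \int_\Omega \nabla_x\big(F(x,u)\big)\cdot x\,dx - \int_\Omega (\partial_x F)(x,u)\cdot x\,dx$; since $f$ in \eqref{eq:f} has no explicit dependence on $x$ beyond the coefficients $b,c$, one would either assume those are constant or carry the $\partial_x F$ terms — but as stated the cleanest route is $\int_\Omega f(x,u)(\nabla u\cdot x)\,dx = \int_\Omega \divg\big(F(x,u)x\big)\,dx - N\int_\Omega F(x,u)\,dx = \int_{\bdry\Omega}F(x,0)(x\cdot\nu)\,d\sigma - N\int_\Omega F(x,u)\,dx = -N\int_\Omega F(x,u)\,dx$ (using $F(x,0)=0$), and then combining with the $\tfrac1N\int f(x,u)u$ produced by the $u$-testing step gives the right-hand side $\int_\Omega[F(x,u)-\tfrac{1}{q^*}f(x,u)u]\,dx$ after the arithmetic with the exponents; note $\tfrac1{q^*} = \tfrac1q-\tfrac1N$ is what makes the coefficients match.

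The main obstacle is \emph{rigor in the integrations by parts}: $W^{2,\A}$-regularity together with $a\in C^1(\Omega)$ (not $C^1(\closure\Omega)$) and merely $C^1$ boundary means the divergence-theorem steps and the trace manipulations are not immediate. I would handle this by a density/approximation argument — approximating $u$ by smooth functions vanishing on $\bdry\Omega$ (or using the fact that $u\in W^{2,\A}$ gives $\nabla u\in W^{1,\A}$, so $D^2 u$ exists in $L^1_{\loc}$ and the products $|\nabla u|^{p-2}\nabla u\cdot D^2u\,x$ are integrable by the embedding theorems of Section \ref{sec2}) — and by exhausting $\Omega$ with smooth subdomains $\Omega_\eps\Subset\Omega$, performing the computation there where everything is classical, and passing to the limit $\eps\to 0$ using dominated convergence for the bulk terms and the $C^1$-boundary regularity to control the boundary integrals. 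A secondary subtlety is the integrability of $|\partial_\nu u|^p(x\cdot\nu)$ and $a|\partial_\nu u|^q(x\cdot\nu)$ on $\bdry\Omega$, which requires that the normal derivative has enough trace regularity; this again follows from $u\in W^{2,\A}(\Omega)$ via a trace theorem in Musielak–Orlicz Sobolev spaces, or can be taken as part of the hypothesis in the sense that the boundary integrals are a priori well defined. Once these technical points are settled, the algebraic bookkeeping collecting the coefficients of $\int|\nabla u|^p$, $\int|\nabla u|^q(\nabla a\cdot x)$, the boundary terms, and $\int F - \tfrac{1}{q^*}\int fu$ is routine, using $p^* = \tfrac{Np}{N-p}$, $q^* = \tfrac{Nq}{N-q}$, and $\tfrac{1}{q^*} = \tfrac1q - \tfrac1N$.
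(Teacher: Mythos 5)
Your proposal follows essentially the same route as the paper's proof: multiply by the Pohožaev multiplier $x\cdot\nabla u$, integrate by parts using $\frac{1}{p}\nabla(|\nabla u|^p)\cdot x$ and the analogous $q$-term with the extra $\frac{1}{q}|\nabla u|^q(\nabla a\cdot x)$ correction, collapse the boundary terms via $\nabla u=(\partial_\nu u)\nu$ on $\partial\Omega$, and then combine with the weak formulation tested against $u$ (scaled by $1/q^\ast$), using $1/p^\ast-1/q^\ast=1/p-1/q$. The bookkeeping you outline reproduces the paper's identities, and your added remarks on the approximation/trace justifications and on the explicit $x$-dependence of $F$ only make explicit points the paper treats as ``straightforward calculations.''
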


This allows us to prove the following nonexistence result.

\begin{theorem}\label{thm:nonex}
Let $\Omega\subset\mathbb R^N$ be a bounded and starshaped $C^1$-domain, \eqref{eq:pq} hold, $0\le a \in C^1(\Omega)$ be radial and radially nondecreasing, and let $f(x,u)=c(x)|u|^{r-2}u+\mu|u|^{p^\ast-2}u+b(x)|u|^{q^\ast-2}u$, with $p\le r< q^\ast$, $\mu\le 0$, and $b,\, c\in L^\infty(\Omega)$.
Problem~\eqref{1} does not admit a nonzero weak solution $u\in W_0^{1,A}(\Omega)\cap W^{2,\A}(\Omega)$ in each of the following cases:
\begin{enumroman}
\item $c(x)\le 0$ a.e. in $\Omega$;
\item $r=p$ and $0 < c_{\infty} < \lambda_1(p)\frac{N(q-p)}{N(q-p)+pq}$;
\item $\Omega$ is strictly starshaped, $r=p$, and $c_\infty= \lambda_1(p)\frac{N(q-p)}{N(q-p)+pq}$,
\end{enumroman}
where $c_\infty=\pnorm[\infty]c$.
\end{theorem}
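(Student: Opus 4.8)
The plan is to combine the Pohožaev identity \eqref{eq:poho} of Theorem \ref{thm:poho} with the radial monotonicity of $a$ and the starshapedness of $\Omega$ to derive a contradiction from the existence of a nonzero solution. First I would recall that for a bounded starshaped $C^1$-domain with respect to the origin one has $x\cdot\nu\ge 0$ on $\partial\Omega$, so the boundary integral in \eqref{eq:poho} is nonnegative; moreover, since $a$ is radial and radially nondecreasing, $\nabla a(x)=a'(|x|)\,x/|x|$ points radially outward, whence $\nabla a\cdot x = |x|\,a'(|x|)\ge 0$ a.e., so the second integral on the left of \eqref{eq:poho} is also nonnegative. Since $p<q$, the coefficient $(1/p-1/q)$ is positive, so the left-hand side of \eqref{eq:poho} is bounded below by $(1/p-1/q)\int_\Omega|\nabla u|^p\,dx\ge 0$, and in fact is $\ge$ a positive multiple of $\|\nabla u\|_p^p$ plus nonnegative terms.

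Next I would compute the right-hand side of \eqref{eq:poho} for the specific nonlinearity $f(x,u)=c(x)|u|^{r-2}u+\mu|u|^{p^*-2}u+b(x)|u|^{q^*-2}u$. One has $F(x,t)=\frac{c(x)}{r}|t|^r+\frac{\mu}{p^*}|t|^{p^*}+\frac{b(x)}{q^*}|t|^{q^*}$, so
\[
F(x,u)-\frac{1}{q^*}f(x,u)u = c(x)\left(\frac1r-\frac1{q^*}\right)|u|^r + \mu\left(\frac1{p^*}-\frac1{q^*}\right)|u|^{p^*} + b(x)\left(\frac1{q^*}-\frac1{q^*}\right)|u|^{q^*}.
\]
The $b$-term vanishes identically. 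Since $r<q^*$ we have $1/r-1/q^*>0$, and since $p^*<q^*$ we have $1/p^*-1/q^*>0$; but $\mu\le 0$, so the $\mu$-term is $\le 0$. Therefore the right-hand side of \eqref{eq:poho} is bounded above by $\left(\frac1r-\frac1{q^*}\right)\int_\Omega c(x)|u|^r\,dx$. In case (i), $c\le 0$ a.e., so the right-hand side is $\le 0$ while the left-hand side is $\ge 0$; hence both must vanish, forcing $\nabla u\equiv 0$ in $\Omega$, and by the Dirichlet condition $u\equiv 0$, a contradiction. This disposes of (i).

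For cases (ii) and (iii), where $r=p$, the key point is to absorb the term $\left(\frac1p-\frac1{q^*}\right)\int_\Omega c(x)|u|^p\,dx$, which since $q^*>q$ equals $\frac{N(q-p)+pq}{Npq}\int_\Omega c|u|^p\,dx$, into the left-hand side using the variational characterization \eqref{7} of $\lambda_1(p)$. Indeed, $u\in W_0^{1,A}(\Omega)\subset W_0^{1,p}(\Omega)$, so $\int_\Omega|u|^p\,dx\le \lambda_1(p)^{-1}\int_\Omega|\nabla u|^p\,dx$, and hence $\int_\Omega c|u|^p\,dx\le \|c\|_{L^\infty}\lambda_1(p)^{-1}\int_\Omega|\nabla u|^p\,dx$. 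Plugging this into \eqref{eq:poho} and keeping only the (nonnegative) leading $\|\nabla u\|_p^p$ term on the left, one obtains
\[
\left[\left(\frac1p-\frac1q\right) - \frac{N(q-p)+pq}{Npq}\cdot\frac{\|c\|_{L^\infty(\Omega)}}{\lambda_1(p)}\right]\int_\Omega|\nabla u|^p\,dx \le 0
\]
plus a nonnegative leftover from the other terms on the left of \eqref{eq:poho}; since $1/p-1/q=(q-p)/(pq)=N(q-p)/(Npq)$, the bracket is nonnegative precisely when $\|c\|_{L^\infty}\le\lambda_1(p)\,\frac{N(q-p)}{N(q-p)+pq}$, which is exactly the stated hypothesis in (ii) (strict) and (iii) (equality). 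In case (ii) the bracket is strictly positive, so $\int_\Omega|\nabla u|^p\,dx=0$ and again $u\equiv 0$. In case (iii) the bracket is zero, so one only gets that all the remaining nonnegative terms on the left of \eqref{eq:poho} vanish; I would then invoke the strict starshapedness ($x\cdot\nu>0$ on $\partial\Omega$) to conclude from the boundary integral that $\partial_\nu u\equiv 0$ on $\partial\Omega$, and simultaneously that equality holds in the Poincaré inequality, i.e. $|u|$ is a first eigenfunction of the $p$-Laplacian — but a first eigenfunction does not satisfy the overdetermined condition $\partial_\nu u=0$ on all of $\partial\Omega$ (or, alternatively, one derives directly from $\partial_\nu u\equiv0$, the equation, and unique continuation that $u\equiv 0$), a contradiction.

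The main obstacle I anticipate is the borderline case (iii): there the Pohožaev inequality degenerates to an equality, so the nonexistence cannot come from the leading-order term alone and one must extract information from the lower-order (boundary and $\nabla a\cdot x$) terms, carefully using strict starshapedness and the rigidity of the Poincaré/eigenvalue inequality; some care is also needed because $u$ need not be smooth up to the boundary, so the boundary integral and the unique continuation argument have to be justified within the regularity class $W_0^{1,A}(\Omega)\cap W^{2,A}(\Omega)$ assumed in the statement.
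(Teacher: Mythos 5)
Your proposal is correct and follows essentially the same route as the paper: the Poho\v{z}aev identity plus starshapedness and $\nabla a\cdot x\ge 0$ give the key inequality, case (i) is immediate, case (ii) absorbs the $c$-term via the variational characterization of $\lambda_1(p)$, and case (iii) uses strict starshapedness to force $\partial_\nu u=0$ together with the rigidity of the $\lambda_1(p)$ inequality (the paper makes your ``overdetermined first eigenfunction'' contradiction precise via the Hopf lemma).
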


Another consequence of the Poho\v{z}aev type identity is the following nonexistence result for solutions with small norm.
\begin{theorem}\label{thm:small}
Let $\Omega\subset\mathbb R^N$ be a bounded and starshaped $C^1$-domain, \eqref{eq:pq} hold, $0\le a \in C^1(\Omega)$ be radial and radially nondecreasing, and let $f(x,u)=c(x)|u|^{r-2}u+\mu|u|^{p^\ast-2}u+b(x)|u|^{q^\ast-2}u$ with $\mu\ge 0$, $0\le b\in L^\infty(\Omega)$ satisfying \eqref{2}, and $0\le c\in C(\bar\Omega)\setminus\{0\}$.
Then there exists a positive constant $\kappa=\kappa(\Omega,p,q,r,a,c,\mu,b_\infty)$ such that
problem \eqref{1} does not admit a weak solution $u\in W_0^{1,A}(\Omega)\cap W^{2,\A}(\Omega)$ belonging to the ball $\{u\in W^{1,\A}(\Omega)\,:\,\|u\|\le \kappa\}$ in each of the following cases:
\begin{enumroman}
\item $p<r\le p^\ast$;
\item $p^\ast<r<q^\ast$ and $c(x)$ satisfies \eqref{eq:hp-on-c};
\item $p^\ast<r<q^\ast$ and $c(x)$ satisfies 
\begin{equation}\label{5q}
a_0':=\inf_{\mathrm{supp} (c)}a(x)>0
\end{equation}.
\end{enumroman}
\end{theorem}
\begin{remark}
Assumption \eqref{5q} implies in particular that $a(x)\neq 0$ for all $x$ having $c(x)\neq 0$. Moreover, we note that \eqref{5q} implies \eqref{eq:hp-on-c}. Indeed, 
\[
\begin{aligned}
\esssup_{\{x\in\Omega\,:\; a(x)\neq 0\}}\frac{c(x)^q}{a(x)^{s}} & = \max\left\{\esssup_{\{x\in\Omega\,:\; a(x) \neq 0, \, c(x) = 0\}}\frac{c(x)^q}{a(x)^{s}}, \esssup_{\{x\in\Omega\,:\; a(x) \neq 0, \,c(x)\neq 0\}}\frac{c(x)^q}{a(x)^{s}}\right\} \\
& = \esssup_{\{x\in\Omega\,:\; c(x)\neq 0\}}\frac{c(x)^q}{a(x)^{s}} \le \frac{c_\infty^q}{(a'_0)^s}<+\infty.
\end{aligned}
\]
Thus, if $a(x) = 0$, $c(x) = 0$ by \eqref{5q}, while for a.a. $x\in\Omega$ such that $a(x) \neq 0$, $c(x) \le \frac{c_\infty}{(a'_0)^{s/q}}a(x)^{s/q}$. Altogether, $c(x) \le C a(x)^{s/q}$ holds for a.a. $x\in\Omega$ with $C = c_\infty/(a'_0)^{s/q} >0$. 
\end{remark}
\section{Statements of the results for the nonlocal problem}
Turning to the question of existence of nontrivial solutions for the Kirchhoff type problem \eqref{2K}, our results, stated in Theorems \ref{Theorem 3} and \ref{Theorem 4} below, apply when $b_\infty \ge 0$ is small and $\mu > 0$ is arbitrary, or when $\mu = 0$ and $b_\infty > 0$  is arbitrary, respectively. 

We first introduce further assumptions on the Kirchhoff function $h$ and some useful notation.
Let
\[
H(t) := \int_0^t h(\tau)\, d\tau, \quad t \ge 0
\]
be the primitive of $h$ that vanishes at zero. For $\ell\in\{p,\,q\}$, the function
\[
K_\ell(t_1,t_2) := H\bigg(\frac{t_1}{p} + \frac{t_2}{q}\bigg) - \frac{1}{\ell^\ast}\, (t_1 + t_2)\, h\bigg(\frac{t_1}{p} + \frac{t_2}{q}\bigg), \quad t_1, t_2 \ge 0
\]
will play an important role in our results. Note that
\begin{equation} \label{8K}
H(\EA{u}) - \frac{1}{\ell^\ast}\, \rho_\A(\nabla u)\, h(\EA{u}) = K_\ell\big(\pnorm{\nabla u}^p,\pnorm[q,a]{\nabla u}^q\big), \quad u \in W^{1,\,\A}_0(\Omega).
\end{equation}
We make the following assumptions on $K_\ell$:
\begin{properties}{K}
\item \label{K1} there exist $\alpha > 0$ and $\gamma < p^*/q$, with $\gamma \ge 1$ if $\ell=p$ and $\gamma > p^*/p$ if $\ell=q$, for which
    \[
    K_\ell(t_1,t_2) \ge \alpha\, (t_1 + t_2)^\gamma \quad \forall \, t_1, t_2 \ge 0;
    \]
\item \label{K2} $K_\ell$ is superadditive in the sense that
    \[
    K_\ell(t_1 + s_1,t_2 + s_2) \ge K_\ell(t_1,t_2) + K_\ell(s_1,s_2) \quad \forall \, t_1, t_2, s_1, s_2 \ge 0.
    \]
\end{properties}

Moreover, for $\ell\in\{p,q\}$, we set
\[
\tilde{h}_\ell(t) := \frac{h(t/\ell)}{t^{\ell^\ast/\ell - 1}}\; \mbox{ for } t > 0\quad\mbox{and}\quad\widetilde{K}_\ell(t) := \begin{cases} K_p(t,0) \quad&\mbox{if }\ell=p,\\
K_q(0,t) &\mbox{if }\ell=q
\end{cases} \;\mbox{ for } t \ge 0
\]

\begin{remark} 
Our model case for the Kirchhoff function is
\begin{equation} \label{43}
h(t) = \alpha_1 + \alpha_2\, t^{\theta - 1},
\end{equation}
where $\alpha_1,\,\alpha_2\ge 0$  are constants with $\alpha_1+\alpha_2>0$ and $\theta \in (1,\ell^\ast/q]$. 
Here
\[
\begin{aligned}
K_\ell(t_1,t_2) = \alpha_1 &\left[\left(\frac{1}{p} - \frac{1}{\ell^\ast}\right)t_1+ \left(\frac{1}{q} - \frac{1}{\ell^\ast}\right) t_2\right] \\ 
& + \alpha_2 \left(\frac{t_1}{p} + \frac{t_2}{q}\right)^{\theta - 1} \left[\left(\frac{1}{\theta p} - \frac{1}{\ell^\ast}\right) t_1 + \left(\frac{1}{\theta q} - \frac{1}{\ell^\ast}\right) t_2\right].
\end{aligned}
\]

Since $p < q$,
\begin{equation}\label{46}
K_\ell(t_1,t_2) \ge \alpha_1 \left(\frac{1}{q} - \frac{1}{\ell^\ast}\right) (t_1 + t_2) + \frac{\alpha_2}{q^{\theta - 1}} \left(\frac{1}{\theta q} - \frac{1}{\ell^\ast}\right) (t_1 + t_2)^\theta,  
\end{equation}
and since $\theta q \le \ell^\ast$, this shows that, if $\ell = p$, \ref{K1} holds with
\begin{equation} \label{41}
\alpha = \alpha_1 \left(\frac{1}{q} - \frac{1}{p^\ast}\right), \qquad \gamma = 1
\end{equation}
when $\alpha_1 > 0$. In both cases $\ell = p,\, q$, \ref{K1} holds with
\begin{equation} \label{42}
\alpha = \frac{\alpha_2}{q^{\theta - 1}} \left(\frac{1}{\theta q} - \frac{1}{\ell^\ast}\right), \qquad \gamma = \theta
\end{equation}
when $\alpha_2 > 0$ and $\theta q<\ell^*$. When $\alpha_1,\,\alpha_2 > 0$ and $\theta q<\ell^*$, \eqref{46} together with the Young's inequality shows that \ref{K1} also holds for any $\gamma \in (1,\theta)$ and
\begin{equation} \label{47}
\alpha = \min \set{\frac{\theta - 1}{\theta - \gamma}\, \alpha_1 \left(\frac{1}{q} - \frac{1}{\ell^\ast}\right),\frac{\theta - 1}{\gamma - 1}\, \frac{\alpha_2}{q^{\theta - 1}} \left(\frac{1}{\theta q} - \frac{1}{\ell^\ast}\right)}.
\end{equation}
Since $\theta q \le \ell^\ast$, \ref{K2} also holds in both cases $\ell=p,\,q$.
\end{remark}

\subsection{Existence results}


We first consider the problem   
\begin{equation} \label{27K}
\left\{\begin{aligned}
- h(\EA{u})\, \DA{u} & = \lambda\, |u|^{\gamma p - 2}\, u + \mu\, |u|^{p^\ast - 2}\, u + c(x)\, |u|^{s - 2}\, u + b(x)\, |u|^{q^\ast - 2}\, u && \text{in } \Omega\\[10pt]
u & = 0 && \text{on } \bdry{\Omega},
\end{aligned}\right.
\end{equation}
where $h : [0,\infty) \to [0,\infty)$ is a continuous and nondecreasing function, $\gamma \in [1,p^\ast/q)$ is as in \ref{K1} for $\ell=p$, $\lambda, \mu > 0$ are parameters, $p^\ast < s < q^\ast$, and $b\in C(\bar\Omega)$ and $c \in L^\infty(\Omega)$ are nonnegative functions satisfying \eqref{2} and \eqref{eq:hp-on-c}, respectively. 

Let 
\begin{equation} \label{10K}
\lambda_1(\gamma,p) := \inf_{u \in W^{1,\,p}_0(\Omega) \setminus \set{0}}\, \frac{\left(\dint_\Omega |\nabla u|^p\, dx\right)^\gamma}{\dint_\Omega |u|^{\gamma p}\, dx}
\end{equation}
be the first eigenvalue of the nonlocal eigenvalue problem
\[
\left\{\begin{aligned}
- \left(\int_\Omega |\nabla u|^p\, dx\right)^{\gamma-1} \Delta_p u & = \lambda\, |u|^{\gamma p - 2}\, u && \text{in } \Omega\\[5pt]
u & = 0 && \text{on } \bdry{\Omega},
\end{aligned}\right.
\]
which is positive since $\gamma p < \gamma q < p^\ast$. Note that $\lambda_1(1,p) = \lambda_1(p)$, the first Dirichlet eigenvalue of the $p$-Laplacian in $\Omega$. 

We have the following theorem.

\begin{theorem} \label{Theorem 3}
Let $\ell = p$ and assume that \ref{K1} and \ref{K2}  
hold for $K_p$. Suppose furthermore that  
there is a ball $B_\rho(x_0) \subset \Omega$ such that $a(x) = 0$ for a.a.\! $x \in B_\rho(x_0)$, and
\[
0 < \lambda < \alpha \gamma p\, \lambda_1(\gamma,p), 
\]
where $\lambda_1(\gamma,p) > 0$ is as in \eqref{10K}. If $\tilde{h}_p$ is strictly decreasing and satisfies 
\begin{equation} \label{21K}
\lim_{t \to 0}\, \tilde{h}_p(t) > \frac{\mu}{S_p^{p^\ast/p}} > \lim_{t \to + \infty}\, \tilde{h}_p(t),
\end{equation}
then there exists $b^\ast > 0$ such that problem \eqref{27K} has a nontrivial weak solution $u \in W^{1,\,\A}_0(\Omega)$ for $0 \le b_\infty < b^\ast$ in each of the following cases:
\begin{enumroman}
\item $N \ge p^2$ and $1 \le \gamma < p^\ast/q$,
\item $N < p^2$ and $(Np - 2N + p)/(N - p)(p - 1) < \gamma < p^\ast/q$.
\end{enumroman}
\end{theorem}

We now give some applications of Theorem \ref{Theorem 3} to the model case \eqref{43}. First we consider the Brezis-Nirenberg type problem
\begin{equation} \label{40}
\left\{\begin{aligned}
- \left(\alpha_1 + \alpha_2\, \EA{u}^{\theta - 1}\right) \DA{u} & = \lambda\, |u|^{p - 2}\, u + |u|^{p^\ast - 2}\, u + b(x)\, |u|^{q^\ast - 2}\, u && \text{in } \Omega\\[10pt]
u & = 0 && \text{on } \bdry{\Omega},
\end{aligned}\right.
\end{equation}
where $\alpha_1 > 0$ and $\alpha_2 \ge 0$ are constants, $\theta \in (1,p^\ast/q]$, $\lambda > 0$ is a parameter, and $b \in C(\bar\Omega)$ is a nonnegative function satisfying \eqref{2}. Applying Theorem \ref{Theorem 3} with $\alpha$ and $\gamma$ as in \eqref{41} gives us the following corollary.

\begin{corollary}\label{cor:1}
Assume that there is a ball $B_\rho(x_0) \subset \Omega$ such that $a(x) = 0$ for a.a.\! $x \in B_\rho(x_0)$ and
\[
0 < \lambda < \alpha_1\, p \left(\frac{1}{q} - \frac{1}{p^\ast}\right) \lambda_1(p),
\]
where $\lambda_1(p) > 0$ is the first Dirichlet eigenvalue of the $p$-Laplacian in $\Omega$. If $N \ge p^2$, then there exists $b^\ast > 0$ such that problem \eqref{40} has a nontrivial weak solution in $W^{1,\,\A}_0(\Omega)$ for $0 \le b_\infty < b^\ast$.
\end{corollary}

Next we consider the problem
\begin{equation} \label{44}
\left\{\begin{aligned}
- \left(\alpha_1 + \alpha_2\, \EA{u}^{\theta - 1}\right) \DA{u} & = \lambda\, |u|^{\theta p - 2}\, u + \mu\, |u|^{p^\ast - 2}\, u + c(x)\, |u|^{s - 2}\, u + b(x)\, |u|^{q^\ast - 2}\, u && \text{in } \Omega\\[10pt]
u & = 0 && \text{on } \bdry{\Omega},
\end{aligned}\right.
\end{equation}
where $\alpha_1 \ge 0$ and $\alpha_2 > 0$ are constants, $\theta \in (1,p^\ast/q)$, $\lambda, \mu > 0$ are parameters, $p^\ast < s < q^\ast$, and $b\in C(\bar\Omega)$ and $c \in L^\infty(\Omega)$ are nonnegative functions satisfying \eqref{2} and \eqref{eq:hp-on-c}, respectively. Applying Theorem \ref{Theorem 3} with $\alpha$ and $\gamma$ as in \eqref{42} gives us the following corollary.

\begin{corollary}\label{cor:2}
Assume that there is a ball $B_\rho(x_0) \subset \Omega$ such that $a(x) = 0$ for a.a.\! $x \in B_\rho(x_0)$ and
\[
0 < \lambda < \frac{\alpha_2\, p}{q^{\theta - 1}} \left(\frac{1}{q} - \frac{\theta}{p^\ast}\right) \lambda_1(\theta,p).
\]
Then there exists $b^\ast > 0$ such that problem \eqref{44} has a nontrivial weak solution in $W^{1,\,\A}_0(\Omega)$ for all $\mu > 0$ and $0 \le b_\infty < b^\ast$ in each of the following cases:
\begin{enumroman}
\item $N \ge p^2$ and $1 < \theta < p^\ast/q$,
\item $N < p^2$ and $(Np - 2N + p)/(N - p)(p - 1) < \theta < p^\ast/q$.
\end{enumroman}
\end{corollary}

Finally we consider the problem
\begin{equation} \label{48}
\left\{\begin{aligned}
- \left(\alpha_1 + \alpha_2\, \EA{u}^{\theta - 1}\right) \DA{u} & = \lambda\, |u|^{r - 2}\, u + \mu\, |u|^{p^\ast - 2}\, u + c(x)\, |u|^{s - 2}\, u + b(x)\, |u|^{q^\ast - 2}\, u && \text{in } \Omega\\[10pt]
u & = 0 && \text{on } \bdry{\Omega},
\end{aligned}\right.
\end{equation}
where $\alpha_1, \alpha_2 > 0$ are constants, $\theta \in (1,p^\ast/q)$, $p < r < \theta p$, $\lambda, \mu > 0$ are parameters, $p^\ast < s < q^\ast$, and $b\in C(\bar\Omega)$ and $c \in L^\infty(\Omega)$ are nonnegative functions satisfying \eqref{2} and \eqref{eq:hp-on-c}, respectively. Applying Theorem \ref{Theorem 3} with $\gamma = r/p$ and $\alpha$ as in \eqref{47} gives us the following corollary.

\begin{corollary}
Assume that there is a ball $B_\rho(x_0) \subset \Omega$ such that $a(x) = 0$ for a.a.\! $x \in B_\rho(x_0)$ and
\[
0 < \lambda < \min \set{\frac{(\theta - 1)\, p}{\theta p - r}\, \alpha_1 \left(\frac{1}{q} - \frac{1}{p^\ast}\right),\frac{(\theta - 1)\, p}{r - p}\, \frac{\alpha_2}{q^{\theta - 1}} \left(\frac{1}{\theta q} - \frac{1}{p^\ast}\right)} r\, \lambda_1(r/p,p).
\]
Then there exists $b^\ast > 0$ such that problem \eqref{48} has a nontrivial weak solution in $W^{1,\,\A}_0(\Omega)$ for all $\mu > 0$ and $0 \le b_\infty < b^\ast$ in each of the following cases:
\begin{enumroman}
\item $N \ge p^2$ and $p < r < \theta p$,
\item $N < p^2$ and $(Np - 2N + p)\, p/(N - p)(p - 1) < r < \theta p$.
\end{enumroman}
\end{corollary}

\begin{remark} We observe that the degenerate case $\alpha_1=0$ is covered in Corollary \ref{cor:2}, while, in Corollary \ref{cor:1}, $\alpha_2=0$ is allowed, this leads to the local case $h\equiv \alpha_1$, for which we recover some known results of Theorem \ref{Theorem 1}-($i$), under a more restrictive assumption on $\lambda$. 
\end{remark}
\bigskip 


\noindent We then consider the problem 
\begin{equation} \label{27q}
\left\{\begin{aligned}
- h(\EA{u})\, \DA{u} & = \eta c(x)\, |u|^{\gamma q - 2}\, u + b(x)\, |u|^{q^\ast - 2}\, u && \text{in } \Omega\\[10pt]
u & = 0 && \text{on } \bdry{\Omega},
\end{aligned}\right.
\end{equation}
where $h : [0,\infty) \to [0,\infty)$ is a continuous and nondecreasing function, $\gamma \in (p^*/p,q^\ast/q)$ is as in \ref{K1} for $\ell = q$, $\eta > 0$ is a parameter, $b \in C(\bar\Omega)\setminus\{0\}$ is a nonnegative function satisfying \eqref{2}, and  $c \in C(\bar\Omega)\setminus\{0\}$ is a nonnegative function satisfying \eqref{5q}, namely $a_0' := \inf_{x \in \supp(c)}\, a(x) > 0$.

Let 
\begin{equation} \label{10q}
\lambda_1(\gamma,q,a) := \inf_{u \in W^{1,\A}_{0}(\Omega) \setminus \{u\,:\, u|_{\supp(c)} \equiv 0\}}\, \frac{\left(\dint_\Omega a(x)|\nabla u|^q\, dx\right)^\gamma}{\dint_\Omega c(x) |u|^{\gamma q}\, dx}
\end{equation}
be the first eigenvalue of the nonlocal eigenvalue problem
\[
\left\{\begin{aligned}
- \left(\int_\Omega a(x)|\nabla u|^q\, dx\right)^{\gamma-1} \mathrm{div}(a(x)|\nabla u|^{q-2}\nabla u) & = \lambda\, c(x) |u|^{\gamma q - 2}\, u && \text{in } \Omega\\[5pt]
u & = 0 && \text{on } \bdry{\Omega}.
\end{aligned}\right.
\]
We observe that for every $u \in W^{1,\A}_{0}(\Omega) \setminus \{u\,:\, u|_{\supp(c)} \equiv 0\}$, since $\gamma q < q^*$, by H\"older's inequality, \eqref{5q}, and the Sobolev embedding $W^{1,q}(\mathrm{supp(c)})\hookrightarrow L^{q^*}(\mathrm{supp(c)})$ we get
\[
\begin{aligned}
\frac{\left(\dint_{\Omega} a(x)|\nabla u|^q\, dx\right)^\gamma}{\dint_{\Omega} c(x) |u|^{\gamma q}\, dx} &\ge \frac{(a'_0)^\gamma\left(\dint_{\supp(c)} |\nabla u|^q\, dx\right)^\gamma}{c_\infty|\supp(c)|^{1-\gamma q/q^*}\bigg(\dint_{\supp(c)} |u|^{q^*}\, dx\bigg)^{\gamma q/q^*}} \ge \frac{C_S(a'_0)^\gamma}{c_\infty|\supp(c)|^{1-\gamma q/q^*}}>0.
\end{aligned}
\]
Thus, $\lambda_1(\gamma,q,a)>0$.

In order to get existence results for this case, we assume further that $h$ satisfies the following assumption:
\begin{equation}\label{hp:h}
\mbox{$h$ is differentiable at $t$, with $h'(t) \ge 0$, for all $t > 0$.}
\end{equation}
We observe that in the model case $h(t)=\alpha_1+\alpha_2t^{\theta-1}$, \eqref{hp:h} is clearly satisfied. 
Moreover, by the mean value theorem \eqref{hp:h} implies 
\begin{equation}\label{eq:conseq-h3}
H(t(1+\xi))-H(t) = \O(\xi)\quad\mbox{as $\xi\to 0^+$, \;for all } t> 0.
\end{equation}
We have the following theorem.

\begin{theorem} \label{Theorem 4}
Let $\ell = q$ and assume that \ref{K1} and \ref{K2} hold for $K_q$. 
Suppose furthermore that \eqref{hp:h} holds, that there is a ball $B_\rho(x_0) \subset \Omega$ such that 
\begin{equation} \label{9q}
a(x) = a_0, \quad b(x) = b_\infty>0, \quad c(x) \ge c_0 \quad \text{for a.a.\! } x \in B_\rho(x_0)
\end{equation}
for some constant $c_0 > 0$, and that 
\[0 < \eta < \alpha\gamma q\lambda_1(\gamma,q,a),\] 
where $\lambda_1(\gamma,q,a) > 0$ is as in \eqref{10q}. If $\tilde{h}_q$ is strictly decreasing and satisfies 
\begin{equation} \label{21q}
\lim_{t \to 0}\, \tilde{h}_q(t) > \frac{b_\infty}{(a_0S_q)^{q^\ast/q}} > \lim_{t \to + \infty}\, \tilde{h}_q(t),
\end{equation}
then 
problem \eqref{27q} has a nontrivial weak solution in $W^{1,\,\A}_0(\Omega)$ when 
$b_\infty > 0$ in each of the following cases:
\begin{enumroman}
\item $1 < p < N(q - 1)/(N - 1)$ and $N^2 (q - 1)/(N - 1)(N - q) < \gamma q < q^\ast$,
\item $N(q - 1)/(N - 1) \le p < q$ and $Np/(N - q) < \gamma q < q^\ast$.
\end{enumroman}
\end{theorem} 

We now give some applications of Theorem \ref{Theorem 4} to the model case \eqref{43}. First we consider the Brezis-Nirenberg type problem
\begin{equation} \label{44q}
\left\{\begin{aligned}
- \left(\alpha_1 + \alpha_2\, \EA{u}^{\theta - 1}\right) \DA{u} & = \eta c(x)\, |u|^{\theta q - 2}\, u + b(x)\, |u|^{q^\ast - 2}\, u && \text{in } \Omega\\[10pt]
u & = 0 && \text{on } \bdry{\Omega},
\end{aligned}\right.
\end{equation}
where $\alpha_1 \ge 0$ and $\alpha_2 > 0$ are constants, $p^*/p < \theta < q^\ast/q$, $\eta > 0$ is a parameter, and $b,\,c\in C(\bar \Omega)\setminus\{0\}$ are nonnegative functions satisfying \eqref{2} and \eqref{5q}, respectively. Applying Theorem \ref{Theorem 4} with $\alpha$ and $\gamma$ as in \eqref{42} gives us the following corollary.

\begin{corollary}\label{cor:4}
Assume that there is a ball $B_\rho(x_0) \subset \Omega$ such that \eqref{9q} holds
for some constant $c_0 > 0$ and that 
\[
0 < \eta < \frac{\alpha_2}{q^{\theta-2}}\left(\frac{1}{q}-\frac{\theta}{q^*}\right)\,\lambda_1(\theta,q,a).
\]
Then problem \eqref{44q} has a nontrivial weak solution in $W^{1,\,\A}_0(\Omega)$ in each of the following cases:
\begin{enumroman}
\item $1 < p < N(q - 1)/(N - 1)$ and $N^2 (q - 1)/(N - 1)(N - q) < \theta q < q^\ast$,
\item $N(q - 1)/(N - 1) \le p < q$ and $Np/(N - q) < \theta q < q^\ast$.
\end{enumroman}
\end{corollary}

Finally we consider the problem
\begin{equation} \label{48q}
\left\{\begin{aligned}
- \left(\alpha_1 + \alpha_2\, \EA{u}^{\theta - 1}\right) \DA{u} & = \eta c(x)\, |u|^{s - 2}\, u + b(x)\, |u|^{q^\ast - 2}\, u && \text{in } \Omega\\[10pt]
u & = 0 && \text{on } \bdry{\Omega},
\end{aligned}\right.
\end{equation}
where $\alpha_1, \alpha_2 > 0$ are constants, $p^*/p < \theta < q^\ast/q$, $p^* q/p < s < \theta q$, $\eta > 0$ is a parameter, and $b,\,c\in C(\bar \Omega)\setminus\{0\}$ are nonnegative functions satisfying \eqref{2} and \eqref{5q}, respectively. Applying Theorem \ref{Theorem 4} with $\gamma=s/q$ and $\alpha$ as in \eqref{47} gives us the following corollary.

\begin{corollary}
Assume that there is a ball $B_\rho(x_0) \subset \Omega$ such that \eqref{9q} holds
\[a(x) = a_0, \quad b(x) = b_\infty, \quad c(x) \ge c_0 \quad \text{for a.a.\! } x \in B_\rho(x_0)\]
for some constant $c_0 > 0$ and that 
\[
0 < \eta < \min\left\{\frac{(\theta -1)q}{\theta q-s}\alpha_1\left(\frac{1}{q}-\frac{1}{q^*}\right),\frac{(\theta -1)q}{s-q}\frac{\alpha_2}{q^{\theta-1}}\left(\frac{1}{\theta q}-\frac{1}{q^*}\right)\right\}\,s\,\lambda_1(s/q,q,a).
\]
Then problem \eqref{48q} has a nontrivial weak solution in $W^{1,\,\A}_0(\Omega)$ in each of the following cases:
\begin{enumroman}
\item $1 < p < N(q - 1)/(N - 1)$ and $N^2 (q - 1)/(N - 1)(N - q) < s < q^\ast$,
\item $N(q - 1)/(N - 1) \le p < q$ and $Np/(N - q) < s < q^\ast$.
\end{enumroman}
\end{corollary}
\noindent We observe that while the degenerate case $\alpha_1=0$ is covered in Corollary \ref{cor:4}, the previous two applications do not include the local case in which $h\equiv \mathrm{Const.}$ (i.e., $\theta=1$ or $\alpha_2=0$).

\subsection{Compactness results}

As for the local case, proofs of Theorems \ref{Theorem 3} and \ref{Theorem 4} will be based on a new compactness result that we will prove for the general critical Kirchhoff double phase problem
\begin{equation} \label{10KCpt}
\left\{\begin{aligned}
-h(\mathcal E_\A(u))\Da{u} & = \mu\, |u|^{p^\ast - 2}\, u + b(x)\, |u|^{q^\ast - 2}\, u + g(x,u) && \text{in } \Omega\\[10pt]
u & = 0 && \text{on } \bdry{\Omega},
\end{aligned}\right.
\end{equation}
where $g$ is a Carath\'{e}odory function on $\Omega \times \R$ satisfying the subcritical growth condition \eqref{11}. The energy functional associated with problem \eqref{10KCpt} is
\begin{equation}
\label{eq:energyJ}
J(u) = H(\EA{u}) - \int_\Omega \left(\frac{\mu}{p^\ast}\, |u|^{p^\ast} + \frac{b(x)}{q^\ast}\, |u|^{q^\ast} + G(x,u)\right) dx, \quad u \in W^{1,\,\A}_0(\Omega),
\end{equation}
where $G(x,t) = \int_0^t g(x,\tau)\, d\tau$.

The critical growth of the nonlinearity in \eqref{10KCpt} prevents the validity of the Palais-Smale condition for all levels $\beta$. We will  characterize variationally the threshold level for compactness as follows: 
\begin{equation} \label{12K}
\beta_{\mu,b_\infty,\ell}^\ast := \inf_{(t_1,t_2) \in S_{\mu,b_\infty} \setminus \set{(0,0)}}\, K_\ell(t_1,t_2),
\end{equation}
where
\[
S_{\mu,b_\infty} := \set{(t_1,t_2) \in \R^2 : t_1, t_2 \ge 0,\, (t_1 + t_2)\, h\bigg(\frac{t_1}{p} + \frac{t_2}{q}\bigg) \le \mu \left(\frac{t_1}{S_p}\right)^{p^\ast/p} + b_\infty \left(\frac{t_2}{a_0\, S_q}\right)^{q^\ast/q}}.
\]
We will show that $J$ satisfies the \PS{\beta} condition for all $\beta < \beta_{\mu,b_\infty,\ell}^\ast$. In particular, if $S_{\mu,b_\infty} = \set{(0,0)}$, then $\beta_{\mu,b_\infty,\ell}^\ast = + \infty$ and hence $J$ satisfies the \PS{\beta} condition for all $\beta \in \R$. More specifically, we have the following theorem.

\begin{theorem} \label{Theorem 1K}
Let $\ell\in\{p,q\}$, assume that \ref{K1} and \ref{K2} hold for $K_\ell$, and suppose that \eqref{2} and \eqref{11} hold. 

If $\ell=p$, assume furthermore that \eqref{eq:hp-on-c} holds and that $g$ satisfies
\begin{enumerate} 
\renewcommand*{\theenumi}{\textnormal{$(g_p)$}}
\renewcommand*{\labelenumi}{\theenumi}
\item\label{11K} $\displaystyle{G(x,t) - \frac{1}{p^\ast}\, t g(x,t) \le \vartheta\, |t|^{\gamma p}} \quad \text{for a.a.\! } x \in \Omega \text{ and all } t \in \R$
\end{enumerate}
for some constant $\vartheta < \alpha\, \lambda_1(\gamma,p)$. 

If $\ell=q$, assume furthermore that  
\eqref{5q} holds and that $g$ satisfies
\begin{enumerate} 
\renewcommand*{\theenumi}{\textnormal{$(g_q)$}}
\renewcommand*{\labelenumi}{\theenumi}
\item\label{11q} $\displaystyle{G(x,t) - \frac{1}{q^\ast}\, t g(x,t) \le \vartheta_1\,|t|^{r} +\vartheta_2\, c(x)|t|^{\gamma q}} \quad \text{for a.a.\! } x \in \Omega \text{ and all } t \in \R$
\end{enumerate}
for some constants $\vartheta_1 \ge 0$, $0 < \vartheta_2 < \alpha\, \lambda_1(\gamma,q,a)$, and $p < r < p^*$.
\smallskip 
 
If $\beta \in \R$ and $\seq{u_j} \subset W^{1,\,\A}_0(\Omega)$ is a {\em \PS{\beta}} sequence, the following statements hold true.
\begin{enumroman}
\item \label{Theorem 1.i} The sequence $\seq{u_j}$ is bounded and hence
    \[
    u_j \wto u, \qquad \pnorm{\nabla (u_j - u)}^p \to t_1, \qquad \pnorm[q,a]{\nabla (u_j - u)}^q \to t_2
    \]
    for a renamed subsequence and some $u \in W^{1,\,\A}_0(\Omega)$ and $t_1, t_2 \ge 0$.
\item \label{Theorem 1.ii} $(t_1,t_2) \in S_{\mu,b_\infty}$, in particular, $u_j \to u$ if $S_{\mu,b_\infty} = \set{(0,0)}$.
\item \label{Theorem 1.iii} If furthermore, $\beta < \beta_{\mu,b_\infty,\ell}^\ast$, then $u_j \to u$ when $\ell = p$ and when $\ell = q$ under the extra assumptions that $\mu = 0$ and \ref{11q} holds with $\vartheta_1 = 0$.
\end{enumroman}
\end{theorem}

\begin{remark} 
\begin{itemize}
\item[($i$)] If $g(x,t) = \lambda\, |t|^{\gamma p - 2}\, t + c(x)\, |t|^{s - 2}\, t$,
with $p^\ast \le s < q^\ast$, $\lambda \ge 0$, and $c \in L^\infty(\Omega)$ a nonnegative function satisfying \eqref{eq:hp-on-c}, \ref{11K} holds when
\[
\vartheta = \left(\frac{1}{\gamma p} - \frac{1}{p^\ast}\right) \lambda < \alpha\, \lambda_1(\gamma,p).
\]
\item[($ii$)] If $g(x,t) = \lambda |t|^{r-2}r + \eta c(x)\, |t|^{\gamma q - 2}\, t$,
with $\lambda \ge 0$ and $\eta > 0$, \ref{11q} holds with $\vartheta_1 = \lambda\left(\frac{1}{r} - \frac{1}{q^\ast}\right) \ge 0$ and 
\[
0 < \vartheta_2 = \eta \left(\frac{1}{\gamma q} - \frac{1}{q^\ast}\right) < \alpha\, \lambda_1(\gamma,q,a).
\]
\end{itemize}
\end{remark}

\begin{corollary} \label{Corollary 1K}
Let $\ell\in\{p,q\}$, assume that \ref{K1} and \ref{K2} hold for $K_\ell$, and suppose that \eqref{2} and \eqref{11} hold. 
Assume furthermore that 
\ref{11K} hold if $\ell = p$, and that $\mu = 0$, 
and \eqref{5q} and  \ref{11q} hold with $\vartheta_1 = 0$ if $\ell = q$. Then the following statements hold true.
\begin{enumroman}
\item \label{Corollary 1.i} $J$ satisfies the {\em \PS{\beta}} condition for all $\beta < \beta_{\mu,b_\infty,\ell}^\ast$.
\item \label{Corollary 1.ii} If the following inequality holds
    \[
    (t_1 + t_2)\, h\bigg(\frac{t_1}{p} + \frac{t_2}{q}\bigg) > \mu \left(\frac{t_1}{S_p}\right)^{p^\ast/p} + b_\infty \left(\frac{t_2}{a_0\, S_q}\right)^{q^\ast/q}
    \]
    for all $t_1, t_2 \ge 0$ with $t_1 + t_2 > 0$, then $J$ satisfies the {\em \PS{\beta}} condition for all $\beta \in \R$.
\end{enumroman}
\end{corollary}

First we show that the infimum in \eqref{12K} is attained and therefore positive by \ref{K1}.

\begin{proposition} \label{Proposition 2}
Let $\ell\in\{p,\,q\}$. Assume that \ref{K1} 
holds for $K_\ell$ and that $S_{\mu,b_\infty} \ne \set{(0,0)}$. Then there exists $(t_1,t_2) \in S_{\mu,b_\infty} \setminus \set{(0,0)}$ such that
\[
\beta_{\mu,b_\infty,\ell}^\ast = K_\ell(t_1,t_2) > 0.
\]
\end{proposition}

We have the following estimates for $\beta_{\mu,b_\infty,p}^\ast$ as $b_\infty \to 0$ and for $\beta_{0,b_\infty,q}^\ast$. 

\begin{theorem} \label{Theorem 2K}
Let $\ell\in\{p,q\}$, assume that \ref{K1} and \ref{K2} hold for $K_\ell$ and that $\tilde{h}_\ell$ is strictly decreasing.
\begin{enumroman}
\item \label{Theorem 2.i} If $\ell=p$ and $\tilde{h}_p$ satisfies \eqref{21K}, then $\beta_{\mu,b_\infty,p}^\ast > 0$ and
\begin{equation} \label{22K}
\beta_{\mu,b_\infty,p}^\ast \ge \widetilde{K}_p\bigg(\tilde{h}_p^{-1}\bigg(\frac{\mu}{S_p^{p^\ast/p}}\bigg)\bigg) + \text{\em o}(1) \quad \text{as } b_\infty \to 0.
\end{equation}
\item \label{Theorem 2.ii} If $\ell=q$, and $\tilde{h}_q$ satisfies \eqref{21q},
then 
\begin{equation} \label{22q}
\beta_{0,b_\infty,q}^\ast \ge \widetilde{K}_q\bigg(\tilde{h}_q^{-1}\bigg(\frac{b_\infty}{(a_0S_q)^{q^\ast/q}}\bigg)\bigg). 
\end{equation}
\end{enumroman}
\end{theorem}

We note that in the model case \eqref{43}, for both $\ell=p$ and $q$, $\tilde{h}_\ell$ is strictly decreasing, \eqref{21K} holds for all $\mu > 0$ and \eqref{21q} holds for all $b_\infty > 0$.

\section{Preliminaries}\label{sec2}
We recall here some notions on Musielak-Orlicz spaces that will be useful in the rest of the paper, see for reference \cite{MR724434}, Section 2 of \cite{MR2790542}, and also Section 2 of \cite{MR3558314}.


\subsection{Generalities on Musielak-Orlicz spaces}

\begin{definition} \label{def:Phi}
\rm A continuous, convex function $\varphi:[0,\infty)\to [0,\infty)$ is called {\it $\Phi$-function} if $\varphi(0)=0$ and $\varphi(t)>0$ for all $t>0$.
\smallskip

\noindent
A function $\varphi:\Omega\times[0,\infty)\to [0,\infty)$ is said to be a {\it generalized $\Phi$-function}, denoted by $\varphi\in\Phi(\Omega)$, if $\varphi(\cdot,t)$ is measurable for all $t\ge0$ and $\varphi(x,\cdot)$ is a $\Phi$-function for a.a. $x\in\Omega$.
\smallskip

\noindent
$\varphi\in\Phi(\Omega)$ is {\it locally integrable} if $\varphi(\cdot,t)\in L^1(\Omega)$ for all $t>0$.
\smallskip

\noindent
$\varphi\in\Phi(\Omega)$ satisfies the {\it $(\Delta_2)$-condition} if there exist a positive constant $C$ and a nonnegative function $h\in L^1(\Omega)$ such that
\[
\varphi(x,2t)\le C\varphi(x,t)+h(x)\quad\mbox{for a.a. }x\in\Omega\mbox{ and all }t\in[0,\infty).
\]
\noindent
Given $\varphi\in \Phi(\Omega)$ that satisfies the {\it $(\Delta_2)$-condition}, the {\it Musielak-Orlicz space} $L^\varphi(\Omega)$ is defined as follows
\[
L^\varphi(\Omega):=\big\{u:\Omega\to\mathbb R\mbox{ measurable }:\,\rho_\varphi(u)<\infty \big\},
\]
where $\rho_\varphi(u):=\int_\Omega\varphi(x,|u|) dx$ is the $\varphi$-{\it modular}. Endowed with the Luxemburg norm
\[
\|u\|_\varphi:=\inf\left\{\gamma>0\,:\,\rho_\varphi(u/\gamma) \le 1\right\},
\]
$L^\varphi$ is a Banach space (see \cite[Theorem 7.7]{MR724434}).
\end{definition}

The following proposition gives a relation between the modular and the norm in $L^\varphi(\Omega)$, the so called {\it unit ball property} (see, e.g., \cite[Lemma 2.1.14]{MR2790542}).
\begin{proposition}
\label{properties}
Let $\varphi\in \Phi(\Omega)$. If $u\in L^\varphi(\Omega)$, then
\begin{equation}\label{eq:unitball}
\rho_\varphi(u)<1\ \mbox{ \em{(resp. $=1;\,>1$)} }\quad \Leftrightarrow \quad \|u\|_\varphi<1\ \mbox{ \em{(resp. $=1;\,>1$).}}
\end{equation}
\end{proposition}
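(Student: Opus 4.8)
The plan is to reduce everything to three elementary facts about the single-variable function $R(\gamma):=\rho_\varphi(u/\gamma)$, $\gamma>0$, attached to a fixed $u\in L^\varphi(\Omega)$; the case $u=0$ is trivial, so assume $u\neq 0$. First I would record the qualitative behaviour of $R$. Since $\varphi(x,\cdot)$ is convex with $\varphi(x,0)=0$, it is nondecreasing, and because a nonzero convex nondecreasing function vanishing at the origin has eventually positive slope, $\varphi(x,\cdot)$ is in fact unbounded; hence $\gamma\mapsto R(\gamma)$ is nonincreasing on $(0,\infty)$, and $R(\gamma)\to 0$ as $\gamma\to\infty$ by dominated convergence (with $\varphi(x,|u|)\in L^1(\Omega)$ dominating $\varphi(x,|u|/\gamma)$ for $\gamma\ge 1$). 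Moreover $\gamma\mapsto\varphi(x,|u(x)|/\gamma)$ is convex, being the composition of the convex map $\gamma\mapsto 1/\gamma$ with the convex nondecreasing map $\varphi(x,\cdot)$, so $R$ is convex; and since the $(\Delta_2)$-condition lets us absorb the factor $1/\gamma$ after finitely many iterations, $R(\gamma)<\infty$ for every $\gamma>0$, so $R$, being finite and convex on an open interval, is continuous there. I would also note $R(1)=\rho_\varphi(u)$ and that, by the definition of the Luxemburg norm together with the monotonicity of $R$, the set $\{\gamma>0:R(\gamma)\le 1\}$ is a half-line with left endpoint $\|u\|_\varphi$.

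With these in hand, the two strict cases are quick. If $\rho_\varphi(u)=R(1)<1$, continuity gives $R(\gamma)<1$ for some $\gamma<1$, whence $\|u\|_\varphi\le\gamma<1$; conversely, $\|u\|_\varphi<1$ forces $R(\gamma)\le 1$ for all $\gamma>\|u\|_\varphi$, and taking $\gamma\in(\|u\|_\varphi,1)$ and using monotonicity yields $\rho_\varphi(u)=R(1)\le R(\gamma)\le 1$, with the strict inequality to be recovered below. Symmetrically, $\rho_\varphi(u)=R(1)>1$ together with continuity gives $R>1$ on a neighbourhood of $\gamma=1$, so $\|u\|_\varphi>1$; and $\|u\|_\varphi>1$ means $1\notin\{\gamma:R(\gamma)\le 1\}$, i.e. $\rho_\varphi(u)>1$.

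The main obstacle is the borderline case $\rho_\varphi(u)=1$, where monotonicity alone is not enough and one genuinely needs the convexity of $R$ (equivalently, the unboundedness of $\varphi(x,\cdot)$). Here $R(1)=1$ gives $\|u\|_\varphi\le 1$ immediately. If we had $\|u\|_\varphi=\lambda<1$, then $R(\gamma)\le 1$ for all $\gamma>\lambda$ while $R(\gamma)\ge R(1)=1$ for all $\gamma\le 1$, so $R\equiv 1$ on $(\lambda,1]$; but a convex nonincreasing function that is constant on an interval stays constant to the right of it, so $R\equiv 1$ on $(\lambda,\infty)$, contradicting $R(\gamma)\to 0$. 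Hence $\rho_\varphi(u)=1\Rightarrow\|u\|_\varphi=1$. This also settles the remaining implications: $\|u\|_\varphi<1$ now excludes $\rho_\varphi(u)=1$, upgrading the inequality of the previous paragraph to $\rho_\varphi(u)<1$; and $\|u\|_\varphi=1$ excludes both $\rho_\varphi(u)<1$ and $\rho_\varphi(u)>1$, giving $\rho_\varphi(u)=1$. Collecting the three cases yields the claimed equivalences.
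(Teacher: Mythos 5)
Your argument is correct, but note that the paper does not prove this proposition at all: it is quoted as the standard ``unit ball property'' with a citation to Lemma 2.1.14 of Diening--Harjulehto--H\"ast\"o--R\r{u}\v{z}i\v{c}ka, so there is no in-paper proof to match. Compared with the textbook route, you take a slightly more roundabout path: the classical proof exploits the scaling inequality $\rho_\varphi(\lambda v)\le \lambda\,\rho_\varphi(v)$ for $\lambda\in(0,1]$ (immediate from convexity and $\varphi(x,0)=0$) together with left-continuity of $\lambda\mapsto\rho_\varphi(\lambda u)$; in particular your borderline case becomes a one-liner, since $\|u\|_\varphi=\lambda<1$ and $\rho_\varphi(u)=1$ would give, for $\gamma\in(\lambda,1)$, $\rho_\varphi(u)=\rho_\varphi\bigl(\gamma\,(u/\gamma)\bigr)\le\gamma\,\rho_\varphi(u/\gamma)\le\gamma<1$, a contradiction, with no need for the convexity of $R(\gamma)=\rho_\varphi(u/\gamma)$ or the limit $R(\gamma)\to0$. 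Your version is nonetheless sound: the convexity of $R$ via the composition rule, its finiteness for $\gamma<1$ via iteration of $(\Delta_2)$, and the resulting continuity are all correctly justified, and the case analysis is complete. Two remarks. First, you rightly invoke $(\Delta_2)$, which in the paper is built into the definition of $L^\varphi(\Omega)$; this is genuinely needed, since without it (or some substitute guaranteeing finiteness of $\rho_\varphi(u/\gamma)$ for $\gamma$ slightly below $1$) the implication $\rho_\varphi(u)<1\Rightarrow\|u\|_\varphi<1$ fails in general Musielak--Orlicz spaces, so your proof is more honest than the bare statement. Second, the parenthetical claim that convexity of $R$ is ``equivalently'' the unboundedness of $\varphi(x,\cdot)$ is inaccurate (neither implies the other), but it is an aside that plays no role in the argument.
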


\begin{definition}
{\rm For $\varphi\in \Phi(\Omega)$, the related Sobolev space $W^{1,\varphi}(\Omega)$ is the set of all $L^\varphi(\Omega)$-functions $u$ having $|\nabla u|\in L^\varphi(\Omega)$, and is equipped with the norm $$\|u\|_{1,\varphi}=\|u\|_\varphi+\|\nabla u\|_\varphi,$$
where $\|\nabla u\|_\varphi$ stands for $\|\,|\nabla u|\,\|_\varphi$.\medskip

\noindent Similarly, the $m$-th order Musielak-Orlicz Sobolev space $W^{m,\varphi}(\Omega)$ is the set of all mesaurable functions $u$ on $\Omega$ having $|\nabla^k u|\in L^\varphi(\Omega)$ for $k=0,1,2$. \medskip

\noindent
$\varphi:[0,\infty)\to[0,\infty)$ is called {\it $\mathcal N$-function} ($\mathcal N$ stands for {\it nice}) if it is a $\Phi$-function satisfying
\[
\lim_{t\to0^+}\frac{\varphi(t)}{t}=0\quad\mbox{and}\quad\lim_{t\to\infty}\frac{\varphi(t)}{t}=\infty.
\]
A function $\varphi:\Omega\times\mathbb R\to [0,\infty)$ is said to be a {\it generalized $\mathcal N$-function}, and is denoted by $\varphi\in N(\Omega)$, if $\varphi(\cdot,t)$ is measurable for all $t\in\mathbb R$ and $\varphi(x,\cdot)$ is an $\mathcal N$-function for a.a. $x\in\Omega$.\medskip

\noindent
If $\varphi\in N(\Omega)$ is locally integrable, we denote by $W^{1,\varphi}_0(\Omega)$ the completion of $C^\infty_0(\Omega)$ in $W^{1,\varphi}(\Omega)$.
}
\end{definition}

We introduce below preliminary notation, definitions, and embeddings in general Musielak-Orlicz spaces, we postpone to the next subsection the statements of the embeddings for the specific Musielak-Orlicz Sobolev spaces of our double phase problems.

\begin{definition}
{\rm Let $\varphi,\,\psi\in \Phi(\Omega)$. The function $\varphi$ {\it is weaker than} $\psi$, denoted by $\varphi\preceq \psi$, if there exist two positive constants $C_1,\, C_2$ and a nonnegative function $h\in L^1(\Omega)$ such that
\[
\varphi(x,t)\le C_1\psi(x,C_2 t)+h(x)\quad\mbox{for a.a. }x\in\Omega\mbox{ and all }t\in [0,\infty).
\]
}
\end{definition}

\begin{proposition}\label{embeddingL}{\rm \cite[Theorem 8.5]{MR724434}}
Let $\varphi,\,\psi\in \Phi(\Omega)$, with $\varphi\preceq\psi$. Then $L^\psi(\Omega)\hookrightarrow L^\varphi(\Omega)$.
\end{proposition}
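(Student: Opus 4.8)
The plan is to prove Proposition~\ref{embeddingL}, i.e.\ that $\varphi\preceq\psi$ implies $L^\psi(\Omega)\hookrightarrow L^\varphi(\Omega)$, by a direct modular argument combined with the unit ball property (Proposition~\ref{properties}). First I would take any $u\in L^\psi(\Omega)$ and normalise: set $v := u/\|u\|_\psi$, so that $\rho_\psi(v)\le 1$ by \eqref{eq:unitball}. Using the defining inequality $\varphi(x,t)\le C_1\psi(x,C_2 t)+h(x)$ with $t = |v(x)|/C_2$ replaced appropriately — more precisely, applying it to $|v|/C_2$ in place of the generic argument — I get
\[
\rho_\varphi\!\left(\frac{v}{C_2}\right) = \int_\Omega \varphi\!\left(x, \frac{|v|}{C_2}\right) dx \le C_1 \int_\Omega \psi(x,|v|)\, dx + \int_\Omega h\, dx \le C_1 + \|h\|_{L^1(\Omega)} =: K,
\]
which is a finite constant independent of $u$. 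This already shows $v/C_2\in L^\varphi(\Omega)$, hence $u\in L^\varphi(\Omega)$, so the set inclusion holds; the remaining task is to get the norm estimate with a uniform constant.

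To obtain continuity of the embedding I would exploit convexity of $\varphi(x,\cdot)$. Choose $\lambda \ge 1$ with $\lambda \ge \max\{1,K\}$ (or simply $\lambda = 1+K$); then by convexity and $\varphi(x,0)=0$ we have $\varphi(x, t/\lambda) \le \tfrac{1}{\lambda}\varphi(x,t)$, so
\[
\rho_\varphi\!\left(\frac{v}{C_2\lambda}\right) \le \frac{1}{\lambda}\, \rho_\varphi\!\left(\frac{v}{C_2}\right) \le \frac{K}{\lambda} \le 1.
\]
By the unit ball property \eqref{eq:unitball} this gives $\|v/(C_2\lambda)\|_\varphi \le 1$, i.e.\ $\|v\|_\varphi \le C_2\lambda$, and undoing the normalisation, $\|u\|_\varphi \le C_2\lambda\, \|u\|_\psi$ with $C_2\lambda$ a constant depending only on $C_1, C_2, \|h\|_{L^1}$. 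That is precisely the continuous embedding $L^\psi(\Omega)\hookrightarrow L^\varphi(\Omega)$.

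One technical point I would be careful about: the quoted result from \cite[Theorem 8.5]{MR724434} and the framework here implicitly require $\varphi,\psi$ to be such that $L^\varphi(\Omega)$, $L^\psi(\Omega)$ are well-defined Banach spaces — in this paper's conventions that means assuming (or noting) the $(\Delta_2)$-condition, under which the modular space coincides with the Luxemburg-norm space and Proposition~\ref{properties} applies. I would state this assumption explicitly if it is not already standing. The only mild obstacle is bookkeeping with the constant $C_2$ inside the $\Phi$-function argument and making sure the convexity scaling is applied to $v/C_2$ rather than $v$ — there is no real analytic difficulty, since the whole argument is a two-line modular estimate plus one invocation of the unit ball property. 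Alternatively, and perhaps more cleanly, I could cite \cite[Theorem 8.5]{MR724434} directly, but writing out the short modular proof above makes the paper self-contained.
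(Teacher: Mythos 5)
Your argument is correct: normalising $v=u/\|u\|_\psi$, using $\varphi(x,t)\le C_1\psi(x,C_2t)+h(x)$ at $t=|v(x)|/C_2$ to get $\rho_\varphi(v/C_2)\le C_1+\|h\|_{L^1(\Omega)}=:K$, and then scaling by $\lambda\ge\max\{1,K\}$ via convexity and $\varphi(x,0)=0$ yields $\rho_\varphi(v/(C_2\lambda))\le 1$ and hence $\|u\|_\varphi\le C_2\lambda\,\|u\|_\psi$; this is a complete and standard modular proof. The paper itself does not prove this statement at all — it is quoted directly from \cite[Theorem 8.5]{MR724434} — so your route differs only in that you make the paper self-contained, at the cost of a few lines; what the citation buys is that Musielak's theorem covers the general $\Phi(\Omega)$ setting without any side conditions, whereas your write-up should be slightly tidied on two points. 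First, the final norm bound does not even need the unit ball property: by the very definition of the Luxemburg norm, $\rho_\varphi\bigl(v/(C_2\lambda)\bigr)\le 1$ already gives $\|v\|_\varphi\le C_2\lambda$; invoking \eqref{eq:unitball} is harmless but superfluous (and the paper states \eqref{eq:unitball} only with strict/equality cases, so you would have to pass through its contrapositive anyway). Second, your deduction ``$v/C_2\in L^\varphi(\Omega)$, hence $u\in L^\varphi(\Omega)$'' uses that $L^\varphi(\Omega)$ is closed under scalar multiplication, which with the paper's definition of $L^\varphi$ via finiteness of the modular is exactly where the $(\Delta_2)$-condition enters; you correctly flag this, and since the paper only defines $L^\varphi(\Omega)$ for $\varphi$ satisfying $(\Delta_2)$, it suffices to say explicitly that you work under that standing assumption (or use the equivalent definition of $L^\varphi$ as the set of $u$ with $\rho_\varphi(\gamma u)<\infty$ for some $\gamma>0$, for which your estimate gives membership directly). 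With these cosmetic adjustments your proof is a faithful, more self-contained substitute for the citation.
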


\begin{definition}
{\rm Let $\phi,\,\psi\in N(\Omega)$. We say that {\it $\phi$ increases essentially more slowly than $\psi$ near infinity}, and we write $\phi\ll\psi$, if for any $k>0$
\[
\lim_{t\to\infty}\frac{\phi(x,kt)}{\psi(x,t)}=0\quad\mbox{uniformly for a.a. }x\in\Omega.
\]
}
\end{definition}

\subsection{Functional setting and embeddings for double phase problems}
The function $\mathcal A:\Omega\times[0,\infty)\to[0,\infty)$ defined as
\[
\mathcal A(x,t):=t^p+a(x)t^q\quad\mbox{for all }(x,t)\in\Omega\times[0,\infty),
\]
with $1<p<q < N$, $q/p \le N/(N-1)$, $0\le a \in L^\infty(\Omega)\cap C^{0,\frac{N}{p}(q-p)}(\Omega)$ is a locally integrable generalized $\mathcal N$-function satisfying the $(\Delta_2)$-condition.
Therefore, the Musielak-Orlicz Lebesgue space $L^\A(\Omega)$ consists of all measurable functions $u : \Omega \to \R$ with finite $\A$-modular
\[
\rho_\A(u) := \int_\Omega \A(x,|u|)\, dx < \infty,
\]
endowed with the Luxemburg norm
\[
\norm[\A]{u} := \inf \set{\gamma > 0 : \rho_\A(u/\gamma) \le 1}.
\]
Since by \eqref{eq:unitball}, $\rho_\A(u/\norm[\A]{u}) = 1$ whenever $u \ne 0$, we have
\begin{equation} \label{17}
\min \set{\norm[\A]{u}^p,\norm[\A]{u}^q} \le \int_\Omega \big(|u|^p + a(x)\, |u|^q\big)\, dx \le \max \set{\norm[\A]{u}^p,\norm[\A]{u}^q} \quad \forall u \in L^\A(\Omega).
\end{equation}
For future use, we explicitly observe that, similarly, for a generic $\mathcal N$-function of the form $\varphi(x,t)=t^\alpha+\psi(x)t^\beta$, with $1<\alpha<\beta$, the corresponding chain of inequalities holds
\begin{equation} \label{17generic}
\min \set{\|u\|_\varphi^\alpha,\|u\|_\varphi^\beta} \le \int_\Omega \big(|u|^\alpha + \phi(x)\, |u|^\beta\big)\, dx \le \max \set{\|u\|_\varphi^\alpha,\|u\|_\varphi^q} \quad \forall u \in L^\varphi(\Omega).
\end{equation}
\smallskip 

The corresponding Musielak-Orlicz Sobolev space $W^{1,\,\A}(\Omega)$ consists of all functions $u$ in $L^\A(\Omega)$ with $|\nabla u| \in L^\A(\Omega)$, endowed with the norm
\[
\norm[1,\,\A]{u} := \norm[\A]{u} + \norm[\A]{\nabla u},
\]
where $\norm[\A]{\nabla u} = \norm[\A]{|\nabla u|}$. We work in the completion $W^{1,\,\A}_0(\Omega)$ of $C^\infty_0(\Omega)$ in $W^{1,\,\A}(\Omega)$.

We recall that the spaces $L^\mathcal A(\Omega)$, $W^{1,\mathcal A}(\Omega)$, and $W^{1,\mathcal A}_0(\Omega)$ are reflexive Banach spaces (see \cite[Propositions 2.14]{MR3558314}), and, moreover, the following Poincar\'e type inequality holds
\begin{equation*}
\norm[\A]{u}\le C_P \norm[\A]{\nabla u}\ \forall\, u\in W^{1,\mathcal A}_0(\Omega)
\end{equation*}
for some constant $C_P>0$ (see \cite[Proposition 2.18-$(iv)$]{MR3558314} and \cite[Theorem~1.2]{MR2870889}).

Hence, we can equivalently renorm $W^{1,\,\A}_0(\Omega)$ by setting $\norm{u} := \norm[\A]{\nabla u}$.

We observe that, by \eqref{17}, we have
\begin{equation} \label{18}
\min \set{\norm{u}^p,\norm{u}^q} \le \int_\Omega \big(|\nabla u|^p + a(x)\, |\nabla u|^q\big)\, dx \le \max \set{\norm{u}^p,\norm{u}^q} \quad \forall u \in W^{1,\,\A}_0(\Omega).
\end{equation}

We collect below some embedding results that will be useful for our analysis. 

\begin{proposition}[Proposition 2.15 of \cite{MR3558314}]\label{prop:embeddings1}
	Let $1<p<q<N$, $0\le a\in L^\infty(\Omega)$, 
	 and let $L^q_a(\Omega)$ be the weighted Lebesgue space $L^q_a(\Omega):=\left\{u:\Omega\to\mathbb R\,\mbox{{\rm measurable}}\,:\,\int_\Omega a(x)|u|^q\,dx<\infty\right\}$, endowed with the seminorm $|u|_{q,a}:=\left(\int_\Omega a(x)|u|^qdx\right)^{1/q}$.
Then the following embeddings hold:
\begin{enumroman}
\item $L^\mathcal A(\Omega)\hookrightarrow L^r(\Omega)$ and $W^{1,\mathcal A}_0(\Omega)\hookrightarrow W^{1,r}_0(\Omega)$ for all $r\in [1, p]$;
\item $W_0^{1,\mathcal A}(\Omega)\hookrightarrow L^r(\Omega)$ for all $r\in [1, p^\ast]$ and $W_0^{1,\mathcal A}(\Omega)\hookrightarrow\hookrightarrow L^r(\Omega)$ for all $r\in [1, p^\ast)$;
\item $L^q(\Omega)\hookrightarrow L^\mathcal A(\Omega)\hookrightarrow L^q_a(\Omega)$.
\end{enumroman}
\end{proposition}

\begin{definition}\label{criticalH}
{\rm For all $x\in\Omega$ denote by $\mathcal A^{-1}(x,\cdot):[0,\infty)\to[0,\infty)$ the inverse function of $\mathcal A(x,\cdot)$ and define $\mathcal A^{-1}_\ast:\Omega\times[0,\infty)\to[0,\infty)$ by
$$\mathcal A^{-1}_\ast(x,\tau):=\int_0^\tau\frac{\mathcal A^{-1}(x,\sigma)}{\sigma^{(N+1)/N}}d\sigma\quad\mbox{for all }(x,\tau)\in\Omega\times[0,\infty).$$
The function $\mathcal A_\ast:(x,t)\in \Omega\times[0,\infty)\mapsto \tau\in[0,\infty)$ where $\tau$ is such that $\mathcal A^{-1}_\ast(x,\tau)=t$, is called {\it Sobolev conjugate function of} $\mathcal A$.}
\end{definition}

\begin{proposition}[Theorems 1.1 - 1.2 and Proposition 3.1 of \cite{MR2870889} and Proposition 2.18 of \cite{MR3558314}]\label{prop:embeddings2} 
The following embeddings hold:
\begin{itemize}
\item[$(i)$] $W^{1,\mathcal A}(\Omega)\hookrightarrow L^{\mathcal A_\ast}(\Omega)$;
\item[$(ii)$] if $\mathcal K:\Omega\times[0,\infty)\to[0,\infty)$ is a continuous generalized $\mathcal N$-function such that $\mathcal K\ll\mathcal A_\ast$, then $W^{1,\mathcal A}(\Omega)\hookrightarrow\hookrightarrow L^\mathcal K(\Omega)$;
\item[$(iii)$] $\mathcal A\ll\mathcal A_\ast$, and consequently $W^{1,\mathcal A}(\Omega)\hookrightarrow\hookrightarrow L^\mathcal A(\Omega)$.
\end{itemize}
\end{proposition}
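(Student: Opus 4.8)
The plan is to treat $(i)$ as the substantive ingredient, derive $(ii)$ from it by a soft truncation argument, and verify $(iii)$ by an explicit estimate. For $(i)$, I would recognize $\mathcal A(x,t)=t^p+a(x)t^q$ as a special case covered by Fan's Musielak--Orlicz Sobolev embedding theorem, so the work is to check that $\mathcal A$ satisfies its hypotheses: it is a locally integrable generalized $\mathcal N$-function obeying $(\Delta_2)$, its convex conjugate also obeys $(\Delta_2)$ since $p>1$, and its Sobolev conjugate $\mathcal A_*$ is well defined because the integral $\int_0^\tau\mathcal A^{-1}(x,\sigma)\sigma^{-(N+1)/N}\,d\sigma$ converges at $0$ (from $\mathcal A^{-1}(x,\sigma)\le\sigma^{1/p}$ and $p<N$) and diverges at $\infty$ (from $\mathcal A^{-1}(x,\sigma)\ge\big(\sigma/(1+a_\infty)\big)^{1/q}$ for $\sigma$ large, with $a_\infty:=\norm[L^\infty(\Omega)]{a}$ and $q<N$; the hypothesis $q/p<1+1/N$ also enters, through the density of smooth functions needed to run the argument on $W^{1,\mathcal A}$ without a Lavrentiev gap). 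Granting this, the embedding is proved as in the Orlicz case of Donaldson--Trudinger: for $u\in C^\infty_0(\Omega)$ one starts from $|u(x)|\le c_N\int_\Omega|\nabla u(y)|\,|x-y|^{1-N}\,dy$, bounds the Luxemburg norm of $u$ in $L^{\mathcal A_*}$ by $\norm{\nabla u}$ via a generalized Young inequality built on the definition of $\mathcal A_*$, and passes to the limit by density. I expect this step to be the hard part, the delicate point being $x$-uniformity of all constants; I would therefore invoke \cite[Theorems 1.1--1.2]{MR2870889} and \cite[Proposition 2.18]{MR3558314}.

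For $(ii)$, I would first note that $\mathcal K\ll\mathcal A_*$ in particular gives $\mathcal K\preceq\mathcal A_*$, so $(i)$ and Proposition \ref{embeddingL} already yield the \emph{continuous} inclusion $W^{1,\mathcal A}(\Omega)\hookrightarrow L^{\mathcal K}(\Omega)$; the content is the compactness. Since $\mathcal A(x,t)\ge t^p$, Proposition \ref{prop:embeddings1}$(i)$ gives $L^{\mathcal A}(\Omega)\hookrightarrow L^p(\Omega)$, hence $W^{1,\mathcal A}(\Omega)\hookrightarrow W^{1,p}(\Omega)$, and the classical Rellich--Kondrachov theorem on the bounded Lipschitz domain $\Omega$ gives $W^{1,\mathcal A}(\Omega)\hookrightarrow\hookrightarrow L^p(\Omega)$. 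Given $(u_j)$ bounded in $W^{1,\mathcal A}(\Omega)$, it is bounded in $L^{\mathcal A_*}(\Omega)$ by $(i)$, and after passing to a subsequence $u_j\to u$ in $L^p(\Omega)$ and a.e.\ in $\Omega$; I would then show $u_j\to u$ in $L^{\mathcal K}(\Omega)$ by the standard truncation estimate. Fixing $\eps>0$, setting $v_j:=(u_j-u)/\eps$, and letting $L$ bound $\norm[\mathcal A_*]{v_j}$, I split the modular $\rho_{\mathcal K}(v_j)$ at the level $|v_j|=LT$: on $\{|v_j|\ge LT\}$, $\mathcal K\ll\mathcal A_*$ furnishes, for each $\delta\in(0,1)$, a threshold $T=T_\delta$ with $\mathcal K(x,Ls)\le\delta\,\mathcal A_*(x,s)$ for $s\ge T$, making that contribution $\le\delta\,\rho_{\mathcal A_*}(v_j/L)\le\delta$; on $\{|v_j|<LT\}$ one has $\mathcal K(x,|v_j|)\le\mathcal K(x,LT)\in L^1(\Omega)$ with $\mathcal K(x,|v_j|)\to0$ a.e., so dominated convergence makes that contribution $o(1)$. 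Hence $\rho_{\mathcal K}(v_j)\le\delta+o(1)<1$ for $j$ large, i.e.\ $\norm[\mathcal K]{u_j-u}\le\eps$; this is essentially \cite[Proposition 3.1]{MR2870889}.

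For $(iii)$, note first that $\mathcal A$ is a continuous generalized $\mathcal N$-function because $\mathcal A(x,t)/t=t^{p-1}+a(x)t^{q-1}\to0$ as $t\to0^+$ and $\to\infty$ as $t\to\infty$, since $p,q>1$. To obtain $\mathcal A\ll\mathcal A_*$, the plan is to derive an $x$-uniform lower bound $\mathcal A_*(x,t)\ge c_\ast\,t^{p^\ast}$: from $\mathcal A(x,t)\ge t^p$ we get $\mathcal A^{-1}(x,\sigma)\le\sigma^{1/p}$, so
\[
\mathcal A^{-1}_*(x,\tau)=\int_0^\tau\frac{\mathcal A^{-1}(x,\sigma)}{\sigma^{(N+1)/N}}\,d\sigma\le\int_0^\tau\sigma^{\,1/p-(N+1)/N}\,d\sigma=p^\ast\,\tau^{\,1/p^\ast},
\]
and inverting the increasing map $\mathcal A^{-1}_*(x,\cdot)$ gives $\mathcal A_*(x,t)\ge(t/p^\ast)^{p^\ast}=:c_\ast\,t^{p^\ast}$. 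Consequently, for every $k>0$,
\[
\frac{\mathcal A(x,kt)}{\mathcal A_*(x,t)}\le\frac{k^p\,t^p+a_\infty\,k^q\,t^q}{c_\ast\,t^{p^\ast}}\longrightarrow0\quad\text{as }t\to\infty,\ \text{uniformly in }x,
\]
since $p<p^\ast$, and since $q/p<1+1/N$ together with $p>1$ forces $q<p(N+1)/N<Np/(N-p)=p^\ast$. This proves $\mathcal A\ll\mathcal A_*$, and then $(ii)$ applied with $\mathcal K=\mathcal A$ gives $W^{1,\mathcal A}(\Omega)\hookrightarrow\hookrightarrow L^{\mathcal A}(\Omega)$. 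The only genuine obstacle in the whole proposition is $(i)$; $(ii)$ and $(iii)$ are formal once $(i)$ is in hand.
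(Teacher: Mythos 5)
Your proposal is correct, and it is worth noting that the paper itself offers no proof of this proposition: it is stated purely as a citation of Fan \cite{MR2870889} (Theorems 1.1--1.2, Proposition 3.1) and \cite[Proposition 2.18]{MR3558314}, so for the genuinely hard part $(i)$ you end up doing exactly what the paper does, namely deferring to those references after checking that $\A(x,t)=t^p+a(x)t^q$ fits their hypotheses (your remark that $q/p<1+1/N$ and the Lipschitz continuity of $a$ are what make Fan's conditions verifiable is the right diagnosis). Your reconstructions of $(ii)$ and $(iii)$ are sound and faithful to the cited sources: the truncation/modular argument for $(ii)$ is precisely Fan's Proposition 3.1, and your explicit chain $\A^{-1}(x,\sigma)\le\sigma^{1/p}\Rightarrow\A_*^{-1}(x,\tau)\le p^*\tau^{1/p^*}\Rightarrow\A_*(x,t)\ge (t/p^*)^{p^*}$, combined with $q<p(N+1)/N<p^*$, is a clean self-contained proof of $\A\ll\A_*$ replacing the citation to \cite{MR3558314}. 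The only point you pass over silently in $(ii)$ is why the dominating function $\mathcal K(\cdot,LT)$ lies in $L^1(\Omega)$: continuity of $\mathcal K$ on $\Omega\times[0,\infty)$ alone does not give this, but it does follow from $\mathcal K\ll\A_*$ together with the uniform upper bound $\A_*(x,t)\le C\,t^{q^*}$ for large $t$ (obtained from $\A(x,t)\le(1+\norm[L^\infty(\Omega)]{a})\max\{t^p,t^q\}$ by the same inversion trick you use in $(iii)$); adding that line would close the dominated-convergence step completely.
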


\begin{proposition}\label{prop:subcritical}
Let $\C$ be of the form
\[
\C(x,t) = t^r + c(x)\, t^s, \quad (x,t) \in \Omega \times [0,\infty),
\]
where $1 < r < p^\ast \le s < q^\ast$ and $c \in L^\infty(\Omega)$ is a nonnegative function satisfying \eqref{eq:hp-on-c}.
Then the following compact embedding holds: $W^{1,\,\A}_0(\Omega)\hookrightarrow \hookrightarrow L^\C(\Omega)$.
\end{proposition}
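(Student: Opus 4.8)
The plan is to reduce the claimed compact embedding $W^{1,\A}_0(\Omega)\hookrightarrow\hookrightarrow L^\C(\Omega)$ to two separate compactness facts, one for each term of $\C(x,t)=t^r+c(x)\,t^s$, handling the (easy) subcritical power $t^r$ by the Sobolev embeddings of Proposition~\ref{prop:embeddings1}(ii), and the (delicate) weighted term $c(x)\,t^s$ — which may involve an exponent $s\ge p^\ast$, hence \emph{not} controlled by the plain Sobolev embedding — by comparing $\C$ with the Sobolev conjugate function $\A_*$ and invoking Proposition~\ref{prop:embeddings2}(ii). Concretely, I would first observe that, by \eqref{17} applied with $\varphi=\C$, a sequence bounded in $W^{1,\A}_0(\Omega)$ has $\rho_\C$-bounded image, and conversely $\rho_\C(u_j-u)\to0$ forces $\norm[\C]{u_j-u}\to0$; so it suffices to show that from any bounded sequence $(u_j)$ in $W^{1,\A}_0(\Omega)$ one can extract a subsequence with $\int_\Omega|u_j-u|^r\,dx\to0$ and $\int_\Omega c(x)\,|u_j-u|^s\,dx\to0$ for some limit $u$.

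For the first integral, since $1<r<p^\ast$, Proposition~\ref{prop:embeddings1}(ii) gives $W^{1,\A}_0(\Omega)\hookrightarrow\hookrightarrow L^r(\Omega)$ directly, yielding a subsequence (not relabeled) with $u_j\to u$ strongly in $L^r(\Omega)$ and, passing to a further subsequence, $u_j\to u$ a.e.\ in $\Omega$. For the second integral the key step is to produce a continuous generalized $\mathcal N$-function $\calK$ with $c(x)\,t^s\le \calK(x,t)$ and $\calK\ll\A_*$; then Proposition~\ref{prop:embeddings2}(ii) gives $W^{1,\A}_0(\Omega)\hookrightarrow\hookrightarrow L^\calK(\Omega)$, hence $\rho_\calK(u_j-u)\to0$ along a subsequence, and since $\int_\Omega c(x)\,|u_j-u|^s\,dx\le\rho_\calK(u_j-u)$ we are done. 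The natural choice is $\calK(x,t):=t^p+c(x)\,t^s$ (adding the lower-order $t^p$ only to make $\calK$ a genuine $\mathcal N$-function, with the right behaviour as $t\to0^+$ and $t\to\infty$; one also checks measurability in $x$ and convexity in $t$, the latter being clear since $s>1$). The comparison $\calK\ll\A_*$ is where assumption \eqref{eq:hp-on-c} enters: as recalled in the paragraph preceding the statement (and proved via Fan's results \cite{MR2870889}), $\A_*(x,t)$ behaves like $t^{p^\ast}$ on $\{a=0\}$ and like $t^{q^\ast}$ on $\{a\ne0\}$; on $\{a=0\}$ we have $c(x)=0$ by \eqref{eq:hp-on-c}, so $\calK(x,t)=t^p$ there with $p<p^\ast$, and on $\{a\ne0\}$ the bound $c(x)\le C\,a(x)^{s/q}$ together with $s<q^\ast$ makes $\calK(x,t)\le t^p+C\,a(x)^{s/q}t^s$ increase essentially more slowly than $t^{q^\ast}\sim\A_*(x,t)$, uniformly in $x$, because $a$ is bounded. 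Making the "$\A_*(x,t)\gtrsim t^{q^\ast}$ on $\{a\ne 0\}$, uniformly" statement quantitatively precise — i.e.\ extracting from Definition~\ref{criticalH} a uniform lower bound of the form $\A_*(x,t)\ge c\,t^{q^\ast}$ for $t$ large whenever $a(x)\ge\delta>0$, and then splitting $\Omega$ into the regions $\{a=0\}$, $\{0<a<\delta\}$, $\{a\ge\delta\}$ and letting $\delta\to0$ — is the main obstacle and the only place requiring genuine care; everything else is a routine assembly of the cited embeddings and the modular–norm correspondence \eqref{eq:unitball}.

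Finally I would combine the two pieces: along a common subsequence, $u_j\to u$ a.e., $\int_\Omega|u_j-u|^r\,dx\to0$, and $\int_\Omega c(x)\,|u_j-u|^s\,dx\to0$; summing gives $\rho_\C(u_j-u)\to0$, whence $\norm[\C]{u_j-u}\to0$ by the unit ball property, which is exactly compactness of $W^{1,\A}_0(\Omega)\hookrightarrow L^\C(\Omega)$. (Continuity of the embedding, if wanted separately, follows the same way from Proposition~\ref{prop:embeddings1}(ii) and Proposition~\ref{prop:embeddings2}(i) together with the pointwise bound $\C(x,t)\le\calK(x,t)+t^r$ and $\calK\preceq\A_*$.)
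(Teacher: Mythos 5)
Your overall architecture (split off the harmless $t^r$ via Proposition \ref{prop:embeddings1}(ii), compare the weighted term with $\A_*$ and invoke Proposition \ref{prop:embeddings2}(ii), then recombine modular convergences through the unit ball property) points in the right direction, but it defers exactly the step that constitutes the proposition: the verification that the weighted function increases essentially more slowly than $\A_*$ \emph{uniformly in $x$}. You acknowledge this as ``the main obstacle'', but the route you sketch for closing it would fail. There is no uniform lower bound $\A_*(x,t)\ge c\,t^{q^\ast}$ (for $t$ large, with $c$ independent of $x$) on the whole set $\{a\neq 0\}$: from Definition \ref{criticalH}, $\A^{-1}(x,\tau)\sim a(x)^{-1/q}\tau^{1/q}$ only for $\tau$ large depending on $a(x)$, and when $a(x)$ is small but positive, $\A(x,t)\approx t^p$ over a long range of $t$, so $\A_*(x,t)$ is there comparable to $t^{p^\ast}\ll t^{q^\ast}$; the constant in any bound $\A_*(x,t)\gtrsim t^{q^\ast}$ necessarily degenerates as $a(x)\to 0^+$. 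Your splitting of $\Omega$ into $\{a=0\}$, $\{0<a<\delta\}$, $\{a\ge\delta\}$ and ``letting $\delta\to 0$'' does not repair this: the condition $\calK\ll\A_*$ demands a single limit as $t\to\infty$ uniform a.e.\ in $x$, and for every fixed $\delta>0$ the region $\{0<a<\delta\}$ — which is precisely where the difficulty sits — is left untreated; sending $\delta\to0$ does not make it disappear, and the two limits cannot be interchanged without an argument you do not supply. Saying ``because $a$ is bounded'' does not help either, since the danger comes from $a$ being small, not large.

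The way the paper closes this is not by a uniform lower bound on $\A_*$ but by an exact cancellation coming from \eqref{eq:hp-on-c}: working with the inverse functions, one has $\A^{-1}(x,\tau)\le a(x)^{-1/q}\tau^{1/q}$ and $\C^{-1}(x,\tau)\ge(\text{const})\,c(x)^{-1/s}\tau^{1/s}$ on $\{a\neq0\}$, and the hypothesis $c(x)\le C\,a(x)^{s/q}$ gives $c(x)^{1/s}\le C^{1/s}a(x)^{1/q}$, so in the quotient $\A_*^{-1}(x,\tau)/\C^{-1}(x,\tau)$ (computed via integration by parts and a de l'H\^opital-type estimate from Definition \ref{criticalH}) the $a(x)$-dependence cancels identically, leaving bounds of the form $\text{const}\cdot\tau^{1/q^\ast-1/s}$ with $s<q^\ast$, uniform in $x$; the set $\{a=0\}$ (where $c=0$) is handled separately using $r<p^\ast$. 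Your choice $\calK(x,t)=t^p+c(x)t^s$ would work just as well as $\C$ itself in this computation (the separate treatment of $t^r$ buys you nothing, since the $a=0$ region is equally easy either way), but the proof must go through this quantitative inverse-function comparison; as written, your proposal has a genuine gap at its central point.
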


\begin{proof}
By \cite[Theorem 3.7-(i), Remark 3.8, and Examples 2.4 and 3.11]{CianchiDiening}, it is enough to prove that for all $k>0$
\begin{equation}
\label{eq:C<<AN}
\lim_{t\to+\infty}\frac{\C(x,kt)}{t^{p^*}+a(x)^{q^*/q}t^{q^*}}=0\quad\mbox{uniformly for a.a.}x\in\Omega.
\end{equation}
By \eqref{eq:hp-on-c} and Young's inequality, for $\varepsilon>0$ we get
\[
\begin{aligned}
\frac{\C(x,kt)}{t^{p^*}+a(x)^{q^*/q}t^{q^*}}&\le \frac{k^rt^r+Ca(x)^{s/q}k^st^s}{t^{p^*}+a(x)^{q^*/q}t^{q^*}} \le (1+k^s)\frac{\varepsilon t^{p^*}+\varepsilon^{r/(r-p^*)}+C[\varepsilon a(x)^{q^*/q}t^{q^*}+\varepsilon^{s/(s-q^*)}]}{t^{p^*}+a(x)^{q^*/q}t^{q^*}}\\
&\le (1+k^s)\max\{1,C\}\left[\varepsilon+\frac{\varepsilon^{r/(r-p^*)}+\varepsilon^{s/(s-q^*)}}{t^{p^*}+a(x)^{q^*/q}t^{q^*}}\right]\\
&\le (1+k^s)\max\{1,C\}\left[\varepsilon+\frac{\varepsilon^{r/(r-p^*)}+\varepsilon^{s/(s-q^*)}}{t^{p^*}}\right]\; \to (1+k^s)\max\{1,C\}\varepsilon 
\end{aligned}
\]
as $t\to+\infty$. By the arbitrariness of $\varepsilon$, \eqref{eq:C<<AN} follows.
\end{proof}

We refer to \cite[Proposition 3.7]{HoWinkert} for a similar compactness result, under different assumptions on the weight $c(x)$.

\begin{proposition}\label{prop:emb-cont-crit}
Let $b \in L^\infty(\Omega)$ be a nonnegative function such that \eqref{2} holds, namely
\[
a_0 = \inf_{x \in \supp(b)}\, a(x) > 0
\]
and let $L^\B(\Omega)$ be the Musielak-Orlicz space associated with
\[
\B(x,t) = t^{p^\ast} + b(x)\, t^{q^\ast}, \quad (x,t) \in \Omega \times [0,\infty).
\]
Then, $W^{1,\A}_0(\Omega)\hookrightarrow L^\B(\Omega)$.

Furthermore, for every $u\in W^{1,\A}_0(\Omega)$ the following estimates hold:
\begin{equation} \label{3}
\int_\Omega |u|^{p^\ast} dx \le \frac{1}{S_p^{p^\ast/p}} \left(\int_\Omega |\nabla u|^p\, dx\right)^{p^\ast/p}
\end{equation}
and  
\begin{equation} \label{4}
\int_\Omega b(x)\, |u|^{q^\ast} dx \le \kappa \left(\int_\Omega a(x)\, |\nabla u|^q\, dx\right)^{q^\ast/q}, 
\end{equation}
where $S_p$ is the best Sobolev constants introduced in \eqref{5} and $\kappa=C_S b_\infty/a_0^{q^\ast/q}$ involves $b_\infty := \pnorm[\infty]{b}$ and the constant $C_S$ arising from the embedding $W^{1,q}(\supp(b))\hookrightarrow L^{q^\ast}(\supp(b))$.
\end{proposition}

\begin{proof}
Let $u \in W^{1,\,\A}_0(\Omega)$. Since $W^{1,\A}_0(\Omega)\hookrightarrow W^{1,p}_0(\Omega)$ by Proposition \ref{prop:embeddings1}, \eqref{3} follows immediately by the Sobolev embedding $W^{1,p}_0(\Omega)\hookrightarrow L^{p^\ast}(\Omega)$. As for \eqref{4}, if $b\equiv 0$, \eqref{4} is trivially verified, otherwise 
\begin{multline*}
\int_\Omega b(x)\, |u|^{q^\ast} dx \le b_\infty \int_{\supp(b)} |u|^{q^\ast} dx \le C_S b_\infty \left(\int_{\supp(b)} |\nabla u|^q\, dx\right)^{q^\ast/q}\\[7.5pt]
\le \frac{C_S b_\infty}{a_0^{q^\ast/q}} \left(\int_{\supp(b)} a(x)\, |\nabla u|^q\, dx\right)^{q^\ast/q} \le \frac{C_S b_\infty}{a_0^{q^\ast/q}} \left(\int_\Omega a(x)\, |\nabla u|^q\, dx\right)^{q^\ast/q},
\end{multline*}
where we used the embedding $W^{1,q}(\supp(b))\hookrightarrow L^{q^\ast}(\supp(b))$.
Now, let $u_j\to u$ in $W^{1,\A}_0(\Omega)$. By \eqref{18}, $\rho_\A(\nabla u_j-\nabla u)\to 0$, and so, by \eqref{4} and \eqref{5}, $\rho_\B(u_j-u)\to 0$, which by \eqref{17generic} implies that $u_j\to u$ in $L^\B(\Omega)$ and concludes the proof. 
\end{proof}

In order to estimate explicitly the threshold level for which the Palias-Smale condition holds for the energy functionals, we need to derive an inequality, similar to \eqref{4}, involving a constant that is independent of $\mathrm{supp}(b)$. More precisely, under stronger regularity assumptions on $b$, we show that the following estimate holds.

\begin{proposition}\label{prop:K-add}
Let $b \in C(\bar{\Omega})$ be a nonnegative function such that \eqref{2} holds. Then for any $\eps > 0$, there exists a constant $C_\eps > 0$ such that
\begin{equation} \label{1000}
\int_\Omega b(x)\, |u|^{q^\ast} dx \le \frac{b_\infty}{(a_0\, S_q)^{q^\ast/q}} \left[(1 + \eps) \int_\Omega a(x)\, |\nabla u|^q\, dx + C_\eps \int_U |u|^q\, dx\right]^{q^\ast/q}
\end{equation}
for all $u \in W^{1,\A}_0(\Omega)$, where $b_\infty := \pnorm[\infty]{b}$, $U = \set{x \in \Omega : b(x) > 0}$, and $S_q$ is the best Sobolev constant introduced in \eqref{5}.
\end{proposition}

\begin{proof}
We may assume that $u \in C^\infty_0(\Omega)$. Let $\eps_1, \eps_2 > 0$ and set
\[
U_\delta = \set{x \in U : \dist{x}{\partial{U} \setminus \partial{\Omega}} > \delta}
\]
for $\delta > 0$. Since $b = 0$ on $\bdry{U} \setminus \bdry{\Omega}$ and $b$ is uniformly continuous, $b \le \eps_1$ on $U \setminus U_\delta$ if $\delta$ is sufficiently small. Fix such a $\delta$ and let $\eta : U \to [0,1]$ be a smooth functions such that $\eta = 1$ on $U_\delta$ and $\eta = 0$ outside $U_{\delta/2}$. Then
\[
\int_\Omega b(x)\, |u|^{q^\ast} dx = \int_U b(x)\, |\eta u|^{q^\ast} dx + \int_U b(x)\, (1 - \eta^{q^\ast})\, |u|^{q^\ast} dx \le b_\infty \int_U |\eta u|^{q^\ast} dx + \eps_1 \int_U |u|^{q^\ast} dx
\]
and hence
\begin{equation} \label{1001}
\left(\int_\Omega b(x)\, |u|^{q^\ast} dx\right)^{q/q^\ast} \le b_\infty^{q/q^\ast} \left(\int_U |\eta u|^{q^\ast} dx\right)^{q/q^\ast} + \eps_1^{q/q^\ast} \left(\int_U |u|^{q^\ast} dx\right)^{q/q^\ast}.
\end{equation}
Since $\eta u \in C^\infty_0(U)$,
\begin{equation}
\left(\int_U |\eta u|^{q^\ast} dx\right)^{q/q^\ast} \le \frac{1}{S_q} \int_U |\nabla (\eta u)|^q\, dx.
\end{equation}
We have
\begin{multline}
\int_U |\nabla (\eta u)|^q\, dx \le \int_U (\eta\, |\nabla u| + |\nabla \eta|\, |u|)^q\, dx \le \int_U ((1 + \eps_2)\, \eta^q\, |\nabla u|^q + C_{\eps_2}\, |\nabla \eta|^q\, |u|^q)\, dx\\[7.5pt]
\le (1 + \eps_2) \int_U |\nabla u|^q\, dx + C_{\eps_1,\eps_2} \int_U |u|^q\, dx
\end{multline}
for some constants $C_{\eps_2}, C_{\eps_1,\eps_2} > 0$. On the other hand,
\begin{equation}
\left(\int_U |u|^{q^\ast} dx\right)^{q/q^\ast} \le C \int_U (|\nabla u|^q + |u|^q)\, dx
\end{equation}
for some constant $C > 0$ depending on $U$. By \eqref{5},
\begin{equation} \label{1002}
\int_U |\nabla u|^q\, dx \le \frac{1}{a_0} \int_U a(x)\, |\nabla u|^q\, dx \le \frac{1}{a_0} \int_\Omega a(x)\, |\nabla u|^q\, dx.
\end{equation}
Combining \eqref{1001}--\eqref{1002} and choosing $\eps_1$ and $\eps_2$ appropriately gives \eqref{1000}.
\end{proof}

\section{Proofs of compactness results in the local case}\label{sec3}
\subsection{Some preliminary lemmas}
We begin with the following lemmas.
\begin{lemma} \label{Lemma 0}
If $\seq{u_j} \subset W^{1,\,\A}_0(\Omega)$ converges weakly to $u$ and $E'(u_j) \to 0$, then, up to a subsequence, $\nabla u_j\to\nabla u$ a.e. in $\Omega$.
\end{lemma}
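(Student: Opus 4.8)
The plan is to adapt the classical almost-everywhere gradient convergence argument (due to Boccardo--Murat) to the double phase operator. First I would set $v_j := u_j - u$, so that $v_j \wto 0$ in $W^{1,\A}_0(\Omega)$ and, by Proposition \ref{prop:embeddings2}-(iii) together with Proposition \ref{prop:subcritical} and Proposition \ref{prop:emb-cont-crit}, $v_j \to 0$ strongly in $L^\A(\Omega)$ and (after passing to a subsequence) $v_j \to 0$ a.e. in $\Omega$ and in $L^r(\Omega)$ for $r < p^*$. I would test $E'(u_j)$ against $v_j$, which is admissible since $\norm{v_j}$ is bounded, so that $\langle E'(u_j), v_j\rangle \to 0$. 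Expanding this pairing, the terms coming from the nonlinearities $\mu|u_j|^{p^*-2}u_j$, $b(x)|u_j|^{q^*-2}u_j$ and $g(x,u_j)$ all have the form $\int_\Omega (\text{bounded in a dual space}) \cdot v_j\, dx$, and these tend to $0$: the subcritical term by compactness (Proposition \ref{prop:subcritical}), and the two critical terms by the continuous embedding into $L^\B(\Omega)$ of Proposition \ref{prop:emb-cont-crit} combined with the fact that $|u_j|^{p^*-2}u_j$ and $b(x)|u_j|^{q^*-2}u_j$ are bounded in the appropriate dual Musielak-Orlicz spaces and converge a.e., so one applies a Brezis--Lieb / weak-$L^1$ type argument, or more simply notes $\int_\Omega (\,\ldots\,) v_j \to 0$ because the $L^\B$-boundedness of $\{u_j\}$ makes the integrands uniformly integrable against $v_j \to 0$ in measure. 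The upshot is
\[
\int_\Omega \big(|\nabla u_j|^{p-2}\nabla u_j + a(x)|\nabla u_j|^{q-2}\nabla u_j\big)\cdot \nabla v_j\, dx \to 0.
\]

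Next I would subtract off the ``frozen'' term $\int_\Omega \big(|\nabla u|^{p-2}\nabla u + a(x)|\nabla u|^{q-2}\nabla u\big)\cdot \nabla v_j\, dx$, which tends to $0$ because $\nabla v_j \wto 0$ in $L^\A(\Omega)$ and $|\nabla u|^{p-2}\nabla u + a(x)|\nabla u|^{q-2}\nabla u$ lies in the dual space. This yields
\[
\Lambda_j := \int_\Omega \big(|\nabla u_j|^{p-2}\nabla u_j - |\nabla u|^{p-2}\nabla u\big)\cdot(\nabla u_j - \nabla u)\, dx + \int_\Omega a(x)\big(|\nabla u_j|^{q-2}\nabla u_j - |\nabla u|^{q-2}\nabla u\big)\cdot(\nabla u_j - \nabla u)\, dx \to 0.
\]
Both integrands are pointwise nonnegative by the standard monotonicity inequality for the $p$- and $q$-Laplacian integrands (strict monotonicity: $(|\xi|^{m-2}\xi - |\eta|^{m-2}\eta)\cdot(\xi-\eta) > 0$ for $\xi \ne \eta$, with equality iff $\xi = \eta$), and $a \ge 0$, so each of the two integrals tends to $0$ separately. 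In particular $\int_\Omega \big(|\nabla u_j|^{p-2}\nabla u_j - |\nabla u|^{p-2}\nabla u\big)\cdot(\nabla u_j - \nabla u)\, dx \to 0$, i.e. the nonnegative integrands $h_j(x) := \big(|\nabla u_j(x)|^{p-2}\nabla u_j(x) - |\nabla u(x)|^{p-2}\nabla u(x)\big)\cdot(\nabla u_j(x) - \nabla u(x))$ converge to $0$ in $L^1(\Omega)$, hence (up to a further subsequence) $h_j \to 0$ a.e. in $\Omega$. A now-standard pointwise lemma (see e.g. Boccardo--Murat, or Dal Maso--Murat) shows that for a.e. fixed $x$, $h_j(x) \to 0$ forces $\nabla u_j(x) \to \nabla u(x)$: one argues by contradiction, extracting for such $x$ a subsequence along which either $|\nabla u_j(x)| \to \infty$ or $\nabla u_j(x)$ converges to some $\xi \ne \nabla u(x)$, and in either case $h_j(x)$ stays bounded away from $0$. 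This gives $\nabla u_j \to \nabla u$ a.e. in $\Omega$ along the subsequence.

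The main obstacle is the passage showing that the nonlinear (right-hand side) terms paired against $v_j$ vanish, specifically the two \emph{critical} terms $\int_\Omega \mu|u_j|^{p^*-2}u_j\, v_j\, dx$ and $\int_\Omega b(x)|u_j|^{q^*-2}u_j\, v_j\, dx$: here there is no compactness of the embedding $W^{1,\A}_0(\Omega) \hookrightarrow L^\B(\Omega)$, so I cannot simply invoke strong convergence. Instead I would use that $\{u_j\}$ is bounded in $W^{1,\A}_0(\Omega)$, hence (by Proposition \ref{prop:emb-cont-crit}) bounded in $L^\B(\Omega)$, so that $\{|u_j|^{p^*-2}u_j\}$ is bounded in $L^{(p^*)'}(\Omega)$ and $\{b(x)^{1/q^*}|u_j|^{q^*-2}u_j\}$ is bounded in the relevant $L^{(q^*)'}$-type space; combined with $v_j \to 0$ a.e. and a uniform integrability estimate — or, cleanly, the fact that $u_j \wto u$ in $L^\B(\Omega)$ and a.e., so $|u_j|^{p^*-2}u_j \wto |u|^{p^*-2}u$ in $L^{(p^*)'}$ while $v_j \to 0$ weakly too — one still needs an argument that the product integral vanishes. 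The safe route is: by Vitali's theorem it suffices to show $\{|u_j|^{p^*-2}u_j\,v_j\}$ is uniformly integrable and converges to $0$ in measure; convergence to $0$ in measure is immediate from $v_j \to 0$ a.e. and the a.e. boundedness of $u_j$ on each fixed measurable set of finite measure, while uniform integrability follows from the $L^{(p^*)'}$-bound on $|u_j|^{p^*-2}u_j$ and the fact that $v_j$ is bounded in $L^{p^*}(\Omega)$ — indeed, on a set $F$ of small measure, $\int_F ||u_j|^{p^*-2}u_j\, v_j| \le \big(\int_F |u_j|^{p^*}\big)^{1/(p^*)'}\big(\int_F|v_j|^{p^*}\big)^{1/p^*}$, and the first factor is small uniformly in $j$ because $\{|u_j|^{p^*}\}$ is equi-integrable as a weakly convergent sequence in $L^1$, being bounded in $L^\B \subset L^{p^*}$. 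The $q^*$-term is handled identically on $\supp(b)$, using $a_0 > 0$ and \eqref{4}. I would also double-check that the subcritical term $\int_\Omega g(x,u_j)v_j$ vanishes using \eqref{11} and the compact embeddings of Propositions \ref{prop:embeddings1}-(ii) and \ref{prop:subcritical}.
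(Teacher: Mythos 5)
The proof breaks down at the step where you claim that the critical terms tested against $v_j=u_j-u$ vanish, i.e.\ $\int_\Omega \mu|u_j|^{p^\ast-2}u_j\,v_j\,dx\to 0$ and the analogous $q^\ast$-term. This is false in general for a sequence that is merely bounded with $E'(u_j)\to 0$: Lemma \ref{Lemma 0} is invoked in Proposition \ref{Proposition 3} precisely where concentration may occur, and for a concentrating Palais--Smale sequence with weak limit $u=0$ (rescaled Aubin--Talenti type bubbles) one has $\int_\Omega |u_j|^{p^\ast-2}u_j\,v_j\,dx=\int_\Omega|u_j|^{p^\ast}dx\to Z>0$. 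The flaw in your justification is the assertion that $\{|u_j|^{p^\ast}\}$ is equi-integrable ``as a weakly convergent sequence in $L^1$'': boundedness of $u_j$ in $L^{p^\ast}(\Omega)$ only gives boundedness of $|u_j|^{p^\ast}$ in $L^1(\Omega)$, which does not imply uniform integrability or weak $L^1$ convergence; concentration is exactly the failure mode, and on small sets $F$ around the concentration point $\int_F|u_j|^{p^\ast}dx$ stays bounded away from $0$ uniformly in $j$. Since these terms need not vanish, you cannot conclude that your quantity $\Lambda_j$ tends to $0$, and the monotonicity argument collapses (in the concentration scenario above $\Lambda_j\to X+Y>0$, even though the a.e.\ gradient convergence itself is still true).

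The repair --- and this is what the paper does --- is the Boccardo--Murat truncation trick: test $E'(u_j)$ against $T(u_j-u)$, where $T$ truncates at height $1$. Then all right-hand-side terms vanish without any equi-integrability of $|u_j|^{p^\ast}$: by H\"older, $\int_\Omega|u_j|^{p^\ast-1}|T(u_j-u)|\,dx\le \big(\int_\Omega|u_j|^{p^\ast}dx\big)^{(p^\ast-1)/p^\ast}\big(\int_\Omega|T(u_j-u)|^{p^\ast}dx\big)^{1/p^\ast}$, where the first factor is bounded and the second tends to $0$ by dominated convergence since $|T(u_j-u)|\le 1$ and $T(u_j-u)\to 0$ a.e.; the $b$- and $g$-terms are handled in the same way. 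The monotonicity estimate is then carried out on $\Omega_j=\{|u_j-u|\le 1\}$, where $\nabla T(u_j-u)=\nabla(u_j-u)$, giving $\int_\Omega\big(|\nabla u_j|^{p-2}\nabla u_j-|\nabla u|^{p-2}\nabla u\big)\cdot\nabla T(u_j-u)\,dx\to 0$, after which the a.e.\ convergence of the gradients follows from \cite[Theorem 1.1]{MR2560133} (or from your pointwise argument restricted to $\Omega_j$, noting that $|\Omega\setminus\Omega_j|\to 0$). The remaining ingredients of your outline --- subtracting the frozen term, nonnegativity of each of the two monotone pieces, the pointwise lemma --- are sound once $v_j$ is replaced by $T(u_j-u)$ throughout.
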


\begin{proof}
Let
\[T(\tau):=\begin{cases}\tau\quad&\mbox{if }|\tau|\le 1,\\
\frac{\tau}{|\tau|}&\mbox{if }|\tau|>1\end{cases}
\]
We recall that, if we prove that the following limit holds true
\begin{equation}\label{eq:thesis}
\int_\Omega\left(|\nabla u_j|^{p-2}\nabla u_j-|\nabla u|^{p-2}\nabla u\right)\cdot \nabla T(u_j-u)\,dx\to 0\quad\mbox{as }j\to\infty,
\end{equation}
we can conclude the proof by \cite[Theorem 1.1]{MR2560133}.

Let
\[
\Omega_j:=\{x\in\Omega\,:\,|u_j(x)-u(x)|\le 1\}\quad\mbox{for every }j\in\mathbb N.
\]
We recall that, by Simon's inequality, if $r\ge 2$, there exists a constant $\kappa_r>0$ such that
\[
(|\xi|^{r-2}\xi-|\eta|^{r-2}\eta)\cdot(\xi-\eta)\ge \kappa_r|\xi-\eta|^r
\]
for all $\xi\,\eta\in \mathbb R^N$.
Hence, for every $j\in\mathbb N$
\begin{equation}\label{eq:Ippos}
\begin{aligned}
I_{p,j}&:=\int_\Omega\left(|\nabla u_j|^{p-2}\nabla u_j-|\nabla u|^{p-2}\nabla u\right)\cdot \nabla T(u_j-u)\,dx\\
&=\int_{\Omega_j}\left(|\nabla u_j|^{p-2}\nabla u_j-|\nabla u|^{p-2}\nabla u\right)\cdot \nabla(u_j-u)\,dx\\
&\ge \kappa_p\int_{\Omega_j}|\nabla u_j-\nabla u|^p\,dx\ge 0
\end{aligned}
\end{equation}
and similarly,
\begin{equation}\label{eq:Iqpos}
\begin{aligned}
I_{q,j}&:=\int_\Omega a(x)\left(|\nabla u_j|^{q-2}\nabla u_j-|\nabla u|^{q-2}\nabla u\right)\cdot \nabla T(u_j-u)\,dx\\
&=\int_{\Omega_j}a(x)\left(|\nabla u_j|^{q-2}\nabla u_j-|\nabla u|^{q-2}\nabla u\right)\cdot \nabla(u_j-u)\,dx\\
&\ge \kappa_q\int_{\Omega_j}a(x)|\nabla u_j-\nabla u|^q\,dx\ge 0.
\end{aligned}
\end{equation}
Moreover, by $u_j\rightharpoonup u$ in $W^{1,\,\A}_0(\Omega)\hookrightarrow W^{1,p}_0(\Omega)$, $u_j\to u$ a.e. in $\Omega$ up to a subsequence still denoted by $u_j$.
Therefore, definitely in $j$, along such a subsequence, $T(u_j-u)=u_j-u$ and also $\nabla T(u_j-u)=\nabla (u_j-u)$. Now, since $u_j\rightharpoonup u$ in $W^{1,\mathcal A}_0(\Omega)$, $\nabla (u_j-u)\rightharpoonup 0$ in $(L^{\mathcal A}(\Omega))^N$ and so also $\nabla T(u_j-u)\rightharpoonup 0$ in $(L^{\mathcal A}(\Omega))^N$. Hence,
\begin{equation}\label{eq:conseq-weak}
\int_\Omega\left(|\nabla u|^{p-2}\nabla u+a(x)|\nabla u|^{q-2}\nabla u\right)\cdot \nabla T(u_j-u)\,dx\to 0\quad\mbox{as }j\to\infty
\end{equation}
and so
\begin{equation}\label{eq:limsup}
\begin{aligned}
&\limsup_{j\to\infty}(I_{p,j}+I_{q,j})=\limsup_{j\to\infty}\left\{\int_\Omega\left(|\nabla u_j|^{p-2}\nabla u_j+a(x)|\nabla u_j|^{q-2}\nabla u_j\right)\cdot\nabla T(u_j-u)\right\}\\
&\;=\limsup_{j\to\infty}\left\{E'(u_j)[T(u_j-u)]+\int_\Omega\left(\mu|u_j|^{p^\ast-2}u_j+b(x)|u_j|^{q^\ast-2}u_j+g(x,u_j)\right)T(u_j-u)\,dx\right\}\\&\;=\limsup_{j\to\infty}\int_\Omega\left(\mu|u_j|^{p^\ast-2}u_j+b(x)|u_j|^{q^\ast-2}u_j+g(x,u_j)\right)T(u_j-u)\,dx,
\end{aligned}
\end{equation}
where in the last equality we used the assumption $E'(u_j)\to0$.

Set
\[
I_j:=\int_\Omega\left(\mu|u_j|^{p^\ast-2}u_j+b(x)|u_j|^{q^\ast-2}u_j+g(x,u_j)\right)T(u_j-u)\,dx,
\]
if we prove that $I_j\to0$, we can conclude by \eqref{eq:limsup}, \eqref{eq:Ippos}, and \eqref{eq:Iqpos}, that both $I_{p,j}$ and $I_{q,j}$ go to zero as $j\to\infty$. In particular $I_{p,j}\to 0$ implies \eqref{eq:thesis} by \eqref{eq:conseq-weak}.
To this aim, using H\"older's inequality and \eqref{11}, we can estimate
\begin{equation}\label{eq:ineq}
\begin{gathered}
\int_\Omega \mu|u_j|^{p^\ast-2}u_j T(u_j-u)\,dx\le \mu\left(\int_\Omega|u_j|^{p^\ast}\right)^{\frac{p^\ast-1}{p^\ast}}\left(\int_\Omega|T(u_j-u)|^{p^\ast}\right)^{\frac{1}{p^\ast}},\\
\int_\Omega b(x)^{\frac{q^\ast-1}{q^\ast}+\frac{1}{q^\ast}}|u_j|^{q^\ast-2}u_j T(u_j-u)\,dx\le \left(\int_\Omega b(x)|u_j|^{q^\ast}\right)^{\frac{q^\ast-1}{q^\ast}}\left(\int_\Omega b(x)|T(u_j-u)|^{q^\ast}\right)^{\frac{1}{q^\ast}},
\end{gathered}
\end{equation}
\begin{equation}\label{eq:ineq'}
\begin{aligned}
&\int_\Omega g(x,u_j) T(u_j-u)\,dx \le \int_\Omega \left(c_1+c_2|u_j|^{r-1}+c(x)|u_j|^{s-1}\right)T(u_j-u)\,dx\\
&\hspace{4.2cm}\le c_1\int_\Omega |T(u_j-u)|\,dx\\
&\hspace{4.2cm}\phantom{\le\,} + c_2\left(\int_\Omega|u_j|^r\,dx\right)^{\frac{r-1}{r}}\left(\int_\Omega|T(u_j-u)|^r\,dx\right)^{\frac{1}{r}}\\
&\hspace{4.2cm}\phantom{\le\,} +\left(\int_\Omega c(x)|u_j|^s\,dx\right)^{\frac{s-1}{s}}\left(\int_\Omega c(x)|T(u_j-u)|^s\,dx\right)^{\frac{1}{s}}.
\end{aligned}
\end{equation}
Clearly, since $u_j\rightharpoonup u\in W^{1,\mathcal A}_0(\Omega)\hookrightarrow\hookrightarrow L^1(\Omega)$, up to a subsequence $u_j\to u$ a.e. and so $\Omega_j=\Omega$ for $j$ large. Thus,
\[
\int_\Omega |T(u_j-u)|\,dx=\int_{\Omega_j} |u_j-u|\,dx\le |u_j-u|_{1}\to 0 \quad\mbox{as }j\to\infty.
\]
Furthermore, by Propositions \ref{prop:subcritical} and \ref{prop:emb-cont-crit} and since $\seq{u_j}$ is bounded in $W^{1,\mathcal A}_0(\Omega)$, all the integrals
\[
\int_\Omega|u_j|^{p^\ast}, \quad \int_\Omega b(x)|u_j|^{q^\ast}, \quad \int_\Omega|u_j|^r\,dx,\quad\mbox{and}\quad\int_\Omega c(x)|u_j|^s\,dx
\]
are bounded. Finally, since $u_j\to u$ a.e. in $\Omega$, also $T(u_j-u)\to 0$ a.e. in $\Omega$, and $|T(u_j-u)|\le 1\in L^1(\Omega)$. Thus, by the Dominated Convergence Theorem, all the integrals in the right-hand sides of \eqref{eq:ineq} and \eqref{eq:ineq'} involving $T(u_j-u)$ go to zero as $j\to\infty$.
This proves that $I_j\to0$ as well, and concludes the proof.
\end{proof}

\begin{lemma} \label{Lemma 1}
If $\seq{u_j} \subset W^{1,\,\A}_0(\Omega)$ converges weakly to $u$ and $E'(u_j) \to 0$, then $u$ is a weak solution of problem \eqref{10}.
\end{lemma}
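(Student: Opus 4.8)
The plan is to pass to the limit in the weak formulation of problem \eqref{10}, using the almost everywhere convergence of the gradients provided by Lemma \ref{Lemma 0}. First I would fix a test function $v \in W^{1,\,\A}_0(\Omega)$ and recall that, since $E'(u_j) \to 0$ in the dual space and $\seq{u_j}$ is bounded (by the coercivity-type estimate guaranteed by \eqref{19}), we have
\[
E'(u_j)[v] = \int_\Omega \left(|\nabla u_j|^{p-2} + a(x)\, |\nabla u_j|^{q-2}\right) \nabla u_j \cdot \nabla v\, dx - \int_\Omega \left(\mu\, |u_j|^{p^\ast - 2}\, u_j + b(x)\, |u_j|^{q^\ast - 2}\, u_j + g(x,u_j)\right) v\, dx \to 0.
\]
It then suffices to show that each of the two integrals converges to the corresponding expression with $u_j$ replaced by $u$. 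By Lemma \ref{Lemma 0}, along a subsequence $\nabla u_j \to \nabla u$ a.e.\ in $\Omega$, hence $|\nabla u_j|^{p-2}\nabla u_j \to |\nabla u|^{p-2}\nabla u$ and $a(x)|\nabla u_j|^{q-2}\nabla u_j \to a(x)|\nabla u|^{q-2}\nabla u$ a.e.; moreover $|u_j|^{p^\ast-2}u_j \to |u|^{p^\ast-2}u$, $b(x)|u_j|^{q^\ast-2}u_j \to b(x)|u|^{q^\ast-2}u$, and $g(x,u_j) \to g(x,u)$ a.e.\ (the last by the Carathéodory property and a.e.\ convergence $u_j \to u$, which follows from the compact embedding $W^{1,\,\A}_0(\Omega) \hookrightarrow\hookrightarrow L^1(\Omega)$).

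The second step is to upgrade these a.e.\ convergences to weak convergence of the relevant nonlinear terms in the appropriate Lebesgue/Musielak-Orlicz dual spaces, so that we may test against a fixed $v$. For the gradient term: the sequence $|\nabla u_j|^{p-2}\nabla u_j$ is bounded in $L^{p'}(\Omega)^N$ and $a(x)^{1/q}|\nabla u_j|^{q-2}\nabla u_j$ is bounded in $L^{q'}(\Omega;a\,dx)^N$ (both by \eqref{18} and the boundedness of $\seq{u_j}$); combined with a.e.\ convergence, a standard lemma (Brezis-Lieb type, or convergence of truncations) gives weak convergence to the respective limits, and pairing against $\nabla v \in L^\A(\Omega)^N \hookrightarrow L^p(\Omega)^N$ passes to the limit. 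For the critical nonlinear terms one argues similarly: by Proposition \ref{prop:emb-cont-crit} the sequence is bounded in $L^\B(\Omega)$, so $|u_j|^{p^\ast-2}u_j$ is bounded in $L^{(p^\ast)'}(\Omega)$ and $b(x)|u_j|^{q^\ast-2}u_j$ is bounded in the corresponding dual; together with a.e.\ convergence this yields weak convergence, and since $v \in L^\B(\Omega)$ the pairing converges. The term with $g$ is the easiest: by \eqref{11} and the \emph{compact} embeddings of Propositions \ref{prop:subcritical} and \ref{prop:embeddings1}, one has strong convergence $u_j \to u$ in $L^r(\Omega)$ and in $L^\C(\Omega)$, whence $g(x,u_j) \to g(x,u)$ strongly in the space dual to the one containing $v$, and the pairing passes to the limit directly.

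Putting these together, for every fixed $v \in W^{1,\,\A}_0(\Omega)$ we obtain
\[
\int_\Omega \left(|\nabla u|^{p-2} + a(x)\, |\nabla u|^{q-2}\right) \nabla u \cdot \nabla v\, dx = \int_\Omega \left(\mu\, |u|^{p^\ast - 2}\, u + b(x)\, |u|^{q^\ast - 2}\, u + g(x,u)\right) v\, dx,
\]
which is exactly the statement that $u$ is a weak solution of \eqref{10}. The main obstacle I anticipate is the passage to the limit in the two \emph{critical} terms — the Sobolev-critical power $|u_j|^{p^\ast-2}u_j$ and the weighted critical power $b(x)|u_j|^{q^\ast-2}u_j$ — since here one only has boundedness in $L^\B(\Omega)$ (no compactness, as noted after Proposition \ref{prop:emb-cont-crit}); the resolution is that a.e.\ convergence of a bounded sequence in a reflexive $L^m$-type space forces the weak limit to be the pointwise limit, so pairing against the \emph{fixed} test function $v$ still works even though we cannot pass to the limit in, say, $\int_\Omega |u_j|^{p^\ast}\,dx$ itself. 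Everything else is a routine application of dominated convergence and the compact embeddings established in Section \ref{sec2}.
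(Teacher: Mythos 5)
Your proposal follows essentially the same route as the paper's proof: the a.e.\ convergence of gradients from Lemma \ref{Lemma 0}, together with boundedness of each nonlinear term in the appropriate (weighted) Lebesgue dual space, yields weak convergence of those terms to their pointwise limits (the paper invokes \cite[Proposition A.8]{MR3057162} for precisely this fact), and one then pairs against a fixed test function $v$. The only slip is cosmetic: the sequence bounded in the weighted space should be $|\nabla u_j|^{q-2}\nabla u_j$ in $L^{q'}(\Omega;a(x)\,dx)$ itself (since $\int_\Omega a(x)|\nabla u_j|^q\,dx$ is bounded), not its product with $a(x)^{1/q}$, but this does not affect the argument.
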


\begin{proof} By assumption we have for every $v\in W^{1,\,\mathcal{A}}_0(\Omega)$
\begin{multline}\label{eq:take-lim}
\int_\Omega \Big[|\nabla u_j|^{p-2}\nabla u_j \cdot \nabla v + a(x)\, |\nabla u_j|^{q-2}\nabla u_j \cdot \nabla v - \mu\, |u_j|^{p^\ast-2}u_j v - b(x)\, |u_j|^{q^\ast-2}v\\[7,5pt]
- g(x,u_j)v\Big]\, dx = o(1)
\end{multline}
as $j\to\infty$. Our goal is to pass in the limit under the integral sign. In what follows the symbols $C,\,C'$ denote different positive constants whose exact values are not important for the argument of the proof.

Let $(u_{j_k})$ be any subsequence of $(u_j)$. By the continuous embedding $W^{1,\,\mathcal A}_0(\Omega)\hookrightarrow W^{1,p}_0(\Omega)$ and the hypothesis that $\seq{u_j}$ is weakly convergent in $W^{1,\,\mathcal A}_0(\Omega)$, $|\nabla u_{j_k}|_{p}\le C\|\nabla u_{j_k}\|_{\mathcal A}\le C'$ for every $k$. Since, by Lemma \ref{Lemma 0}, $\nabla u_{j_k}\to \nabla u$ a.e. in $\Omega$ up to a subsequence still indexed by $j_k$, by \cite[Proposition~A.8-$(i)$, with $w=\chi_\Omega$]{MR3057162}, $|\nabla u_{j_k}|^{p-2}\nabla u_{j_k}\rightharpoonup |\nabla u|^{p-2}\nabla u$ in $(L^{p'}(\Omega))^N$. By the arbitrariness of the subsequence, the entire sequence $(|\nabla u_{j}|^{p-2}\nabla u_{j})$ converges weakly to $|\nabla u|^{p-2}\nabla u$ in $(L^{p'}(\Omega))^N$. In particular, for every $v\in W^{1,\,\mathcal A}_0(\Omega)$,
\begin{equation}\label{eq:w-gradp}
\int_\Omega |\nabla u_j|^{p-2}\nabla u_j \cdot \nabla v\,dx\to \int_\Omega |\nabla u|^{p-2}\nabla u \cdot \nabla v\,dx\quad\mbox{as }j\to\infty,
\end{equation}
since $\nabla v\in (L^\mathcal A(\Omega))^N\subset (L^p(\Omega))^N$.

Similarly, since $\int_\Omega a(x)|\nabla u_{j_k}|^q\,dx\le \rho_\mathcal A(\nabla u_{j_k})\le C$ by \eqref{17}, and using the a.e. convergence of the gradients, by \cite[Proposition~A.8-$(i)$, with $w=a(x)\chi_\Omega$]{MR3057162}, it holds $|\nabla u_{j_k}|^{q-2}\nabla u_{j_k}\rightharpoonup |\nabla u|^{q-2}\nabla u$ in the weighted Lebesgue space
\[
(L^{q'}(\Omega;a(x)))^N := \left\{\varphi:\Omega\to \mathbb R^N\,\mbox{ measurable}\,:\, \int_\Omega a(x)|\varphi|^{q'}\,dx<\infty\right\}.
\]
Hence, the whole sequence $(|\nabla u_{j}|^{q-2}\nabla u_{j})$ converges weakly to $|\nabla u|^{q-2}\nabla u$ in $(L^{q'}(\Omega;a(x)))^N$. In particular, for every $v\in W^{1,\,\mathcal A}_0(\Omega)$,
\begin{equation}\label{eq:w-gradq}
\int_\Omega a(x)|\nabla u_j|^{q-2}\nabla u_j \cdot \nabla v\,dx\to \int_\Omega a(x)|\nabla u|^{q-2}\nabla u \cdot \nabla v\,dx\quad\mbox{as }j\to\infty,
\end{equation}
since $\nabla v\in (L^\mathcal A(\Omega))^N\subset (L^q(\Omega;a(x)))^N$.

Analogously, by the continuous embedding $W^{1,\,\mathcal A}_0(\Omega)\hookrightarrow L^{p^\ast}(\Omega)$ and the hypothesis that $\seq{u_j}$ is weakly convergent -and so bounded- in $W^{1,\,\mathcal A}_0(\Omega)$, the sequence $(u_{j_k})$ is bounded in $L^{p^\ast}(\Omega)$. Moreover, up to a further subsequence, $u_{j_k}\to u$ a.e. in $\Omega$, and so applying again \cite[Proposition~A.8-$(i)$, with $w=\chi_\Omega$]{MR3057162}, we can conclude by the arbitrariness of the subsequence that $|u_j|^{p^\ast-2}u_j\rightharpoonup |u|^{p^\ast-2}u$ in $L^{p^{\ast'}}(\Omega)$. In particular, for every $v\in W^{1,\,\mathcal A}_0(\Omega)$,
\begin{equation}\label{eq:w-p*}
\int_\Omega |u_j|^{p^\ast-2}u_j v\,dx\to \int_\Omega |u|^{p^\ast-2} u v\,dx\quad\mbox{as }j\to\infty.
\end{equation}
We can argue in a similar way to obtain for every $v\in W^{1,\,\mathcal A}_0(\Omega)$,
\begin{equation}\label{eq:w-q*}
\int_\Omega b(x)|u_j|^{q^\ast-2}u_j v\,dx\to \int_\Omega b(x) |u|^{q^\ast-2} u v\,dx\quad\mbox{as }j\to\infty.
\end{equation}
Finally, taking into account that $g(x,\cdot)$ is continuous and assumption \eqref{11}, also the term with $g(x,u_j)$ can be passed to the limit in the same way to get
for every $v\in W^{1,\,\mathcal A}_0(\Omega)$,
\begin{equation}\label{eq:w-g}
\int_\Omega g(x,u_j) v\,dx\to \int_\Omega g(x,u) v\,dx\quad\mbox{as }j\to\infty.
\end{equation}
Combining together \eqref{eq:w-gradp}-\eqref{eq:w-g}, we can pass to the limit in \eqref{eq:take-lim} to have the desired result.
\end{proof}

\subsection{Proofs of Proposition \ref{Proposition 3} and Proposition \ref{Proposition 4}}

\begin{proof}[Proof of Proposition \ref{Proposition 3}]
Let $\seq{u_j}$ be a \PS{\beta} sequence, i.e.,
\begin{equation} \label{12}
E(u_j) = \int_\Omega \left[\frac{1}{p}\, |\nabla u_j|^p + \frac{a(x)}{q}\, |\nabla u_j|^q - \frac{\mu}{p^\ast}\, |u_j|^{p^\ast} - \frac{b(x)}{q^\ast}\, |u_j|^{q^\ast} - G(x,u_j)\right] dx = \beta + \o(1)
\end{equation}
and
\begin{equation} \label{13}
E'(u_j)\, u_j = \int_\Omega \Big[|\nabla u_j|^p + a(x)\, |\nabla u_j|^q - \mu\, |u_j|^{p^\ast} - b(x)\, |u_j|^{q^\ast} - u_j\, g(x,u_j)\Big]\, dx = \o(\norm{u_j}).
\end{equation}
Dividing \eqref{13} by $\sigma$, with $\sigma$ as in \eqref{19}, subtracting from \eqref{12}, and using \eqref{19} gives
\[
\left(\frac{1}{p} - \frac{1}{\sigma}\right) \int_\Omega |\nabla u_j|^p\, dx + \left(\frac{1}{q} - \frac{1}{\sigma}\right) \int_\Omega a(x)\, |\nabla u_j|^q\, dx \le \o(\norm{u_j}) + \beta + c_3 \vol{\Omega} + \o(1),
\]
which together with \eqref{18} implies that $\seq{u_j}$ is bounded. Since $W^{1,\,\A}_0(\Omega)$ is reflexive, a renamed subsequence of $\seq{u_j}$ then converges weakly to some $u \in W^{1,\,\A}_0(\Omega)$. By Lemma \ref{Lemma 1}, $u$ is a weak solution of problem \eqref{10}.

It remains to show that $u$ is nontrivial. Suppose $u = 0$. Then the growth condition \eqref{11} implies by Proposition \ref{prop:subcritical} that
\begin{equation}\label{eq:uses-subcrit}
\int_\Omega G(x,u_j)\, dx \to 0, \qquad \int_\Omega u_j\, g(x,u_j)\, dx \to 0,
\end{equation}
so \eqref{12} and \eqref{13} reduce to
\begin{equation} \label{14}
\int_\Omega \left[\frac{1}{p}\, |\nabla u_j|^p + \frac{a(x)}{q}\, |\nabla u_j|^q - \frac{\mu}{p^\ast}\, |u_j|^{p^\ast} - \frac{b(x)}{q^\ast}\, |u_j|^{q^\ast}\right] dx = \beta + \o(1)
\end{equation}
and
\begin{equation} \label{20}
\int_\Omega \Big[|\nabla u_j|^p + a(x)\, |\nabla u_j|^q - \mu\, |u_j|^{p^\ast} - b(x)\, |u_j|^{q^\ast}\Big]\, dx = \o(1),
\end{equation}
respectively. Since $\seq{u_j}$ is bounded, \eqref{18}, \eqref{3}, and \eqref{4} imply that
\[
\int_\Omega |\nabla u_j|^p\, dx \to X, \quad \int_\Omega a(x)\, |\nabla u_j|^q\, dx \to Y, \quad \mu \int_\Omega |u_j|^{p^\ast} dx \to Z, \quad \int_\Omega b(x)\, |u_j|^{q^\ast} dx \to W
\]
for a renamed subsequence and some $X, Y, Z, W \ge 0$. Then \eqref{14} gives
\begin{equation} \label{24}
\beta = I(X,Y,Z,W),
\end{equation}
in particular, \eqref{21} holds, and \eqref{20} in the limit reduces to \eqref{22}. Now, we recall that 
\[
\mu \int_\Omega |u_j|^{p^\ast} dx \le \frac{\mu}{S_p^{p^\ast/p}} \left(\int_\Omega |\nabla u_j|^p\, dx\right)^{p^\ast/p}
\]
holds by \eqref{3}, then, passing to the limit gives that the first inequality in \eqref{23} holds for $Z$ and $X$. Furthermore, 
    \[
    \int_\Omega b(x)\, |u_j|^{q^\ast} dx \le \frac{b_\infty}{(a_0\, S_q)^{q^\ast/q}} \left[(1 + \eps) \int_\Omega a(x)\, |\nabla u_j|^q\, dx + C_\eps \int_U |u_j|^q\, dx\right]^{q^\ast/q}
    \]
by \eqref{1000}.  
Since $u_j \wto 0$ in $W^{1,\,\A}_0(\Omega)$, $u_j \to 0$ in $L^q(U)$ by Proposition \ref{prop:embeddings1}-($ii$) applied with $r=q<p^*$, so passing to the limit as $j\to\infty$ in the last inequality gives
    \[
    W \le \frac{b_\infty}{(a_0\, S_q)^{q^\ast/q}}\, [(1 + \eps)\, Y]^{q^\ast/q}.
    \]
    Now letting $\eps \to 0$ shows that the second inequality in \eqref{23} holds for $W$ and $Y$.
So $(X,Y,Z,W) \in S(\mu,b_\infty)$ and hence \eqref{24} gives $\beta \ge \beta^\ast(\mu,b_\infty)$, contrary to assumption.
\end{proof}

We stress the importance of the compactness of the embedding $W^{1,\A}(\Omega)\hookrightarrow\hookrightarrow L^\C(\Omega)$ for the proof of $u\neq0$ in the last part of the previous proof (see \eqref{eq:uses-subcrit}).

\begin{proof}[Proof of Proposition \ref{Proposition 4}]
Let $\seq{(X_j,Y_j,Z_j,W_j)} \subset S(\mu,b_\infty)$ be a minimizing sequence for $\beta^\ast(\mu,b_\infty)$. By \eqref{23} and \eqref{22},
\begin{multline} \label{25}
(X_j + Y_j) \left[1 - \frac{\mu}{S_p^{p^\ast/p}}\, X_j^{\frac{p^\ast}{p} - 1} - \frac{b_\infty}{(a_0\, S_q)^{q^\ast/q}}\, Y_j^{\frac{q^\ast}{q} - 1}\right] \le X_j + Y_j\\[7.5pt]
- \frac{\mu}{S_p^{p^\ast/p}}\, X_j^{p^\ast/p} - \frac{b_\infty}{(a_0\, S_q)^{q^\ast/q}}\, Y_j^{q^\ast/q} \le X_j + Y_j - Z_j - W_j = 0.
\end{multline}
If $X_j + Y_j = 0$, then $Z_j + W_j = 0$ also by \eqref{22}, so $X_j = Y_j = Z_j = W_j = 0$ and hence $I(X_j,Y_j,Z_j,W_j) = 0$, contradicting \eqref{21}$_j$ (we denote \eqref{21} applied to $(X_j,Y_j,Z_j,W_j)$ by \eqref{21}$_j$). So $X_j + Y_j > 0$ and hence \eqref{25} implies that
\begin{equation} \label{27}
\frac{\mu}{S_p^{N/(N-p)}}\, X_j^{p/(N-p)} + \frac{b_\infty}{(a_0\, S_q)^{N/(N-q)}}\, Y_j^{q/(N-q)} \ge 1.
\end{equation}

Dividing \eqref{22}$_j$ by $p^\ast$, subtracting from \eqref{26}$_j$, and combining with \eqref{27} gives
\begin{multline} \label{28}
I(X_j,Y_j,Z_j,W_j) = \frac{1}{N}\, X_j + \left(\frac{1}{q} - \frac{1}{p^\ast}\right) Y_j + \left(\frac{1}{p^\ast} - \frac{1}{q^\ast}\right) W_j\\[7.5pt]
\ge \frac{1}{N}\, \frac{S_p^{N/p}}{\mu^{(N-p)/p}} \left[1 - \frac{b_\infty}{(a_0\, S_q)^{N/(N-q)}}\, Y_j^{q/(N-q)}\right]^{(N-p)/p},
\end{multline}
where we used that $p<q<p^\ast<q^\ast$.
Since $S(\mu,0) \subset S(\mu,b_\infty)$, $\beta^\ast(\mu,b_\infty) \le \beta^\ast(\mu,0)$ and hence $I(X_j,Y_j,Z_j,W_j)$ is bounded uniformly in $b_\infty \ge 0$ for fixed $\mu > 0$, so the equality in \eqref{28} implies for $j$ large
\[
\left(\frac{1}{q} - \frac{1}{p^\ast}\right)Y_j\le I(X_j,Y_j,Z_j,W_j)\le \beta^\ast(\mu,b_\infty)+1\le \beta^\ast(\mu,0)+1,
\]
that is, $Y_j$ is bounded uniformly in $b_\infty$. So \eqref{15} follows from the last inequality in \eqref{28}.

Dividing \eqref{22}$_j$ by $q^\ast$, subtracting from \eqref{26}$_j$, and combining with \eqref{27} and \eqref{23}$_j$ gives
\begin{multline} \label{29}
I(X_j,Y_j,Z_j,W_j) = \left(\frac{1}{p} - \frac{1}{q^\ast}\right) X_j + \frac{1}{N}\, Y_j - \left(\frac{1}{p^\ast} - \frac{1}{q^\ast}\right) Z_j\\[7.5pt]
\ge \frac{1}{N}\, \frac{(a_0\, S_q)^{N/q}}{b_\infty^{(N-q)/q}} \left[1 - \frac{\mu}{S_p^{N/(N-p)}}\, X_j^{p/(N-p)}\right]^{(N-q)/q} - \left(\frac{1}{p^\ast} - \frac{1}{q^\ast}\right) \frac{\mu}{S_p^{p^\ast/p}}\, X_j^{p^\ast/p}.
\end{multline}
Since $S(0,b_\infty) \subset S(\mu,b_\infty)$, $\beta^\ast(\mu,b_\infty) \le \beta^\ast(0,b_\infty)$ and hence $I(X_j,Y_j,Z_j,W_j)$ is bounded uniformly in $\mu \ge 0$ for fixed $b_\infty > 0$, so the first equality in \eqref{28} implies that $X_j$ is bounded uniformly in $\mu$. So \eqref{16} follows from \eqref{29}.
\end{proof}

\section{Proofs of existence results in the local case}\label{sec4}

\subsection{Proof of Theorem \ref{Theorem 1}}
The variational functional associated with problem \eqref{6} is
\[
E(u) = \int_\Omega \left[\frac{1}{p}\, |\nabla u|^p + \frac{a(x)}{q}\, |\nabla u|^q - \frac{\lambda}{r}\, |u|^r - \frac{\mu}{p^\ast}\, |u|^{p^\ast} - \frac{b(x)}{q^\ast}\, |u|^{q^\ast}\right] dx, \quad u \in W^{1,\,\A}_0(\Omega).
\]
When $r = p$, \eqref{7}, \eqref{3}, and \eqref{4} give
\begin{multline} \label{30}
E(u) \ge \frac{1}{p} \left(1 - \frac{\lambda}{\lambda_1(p)}\right) \int_\Omega |\nabla u|^p\, dx + \frac{1}{q} \int_\Omega a(x)\, |\nabla u|^q\, dx\\[4.5pt] - \frac{\mu}{p^\ast\, S_p^{p^\ast/p}} \left(\int_\Omega |\nabla u|^p\, dx\right)^{p^\ast/p}
- \frac{\kappa}{q^\ast} \left(\int_\Omega a(x)\, |\nabla u|^q\, dx\right)^{q^\ast/q}.
\end{multline}
For $\norm{u} \le 1$,
\[
\norm{u}^q \le \int_\Omega \big(|\nabla u|^p + a(x)\, |\nabla u|^q\big)\, dx \le \norm{u}^p
\]
by \eqref{18} and hence, using that $0<\lambda<\lambda_1(p)$, \eqref{30} gives
\[
E(u) \ge \min \set{\frac{1}{p} \left(1 - \frac{\lambda}{\lambda_1(p)}\right),\frac{1}{q}} \norm{u}^q - \frac{\mu}{p^\ast\, S_p^{p^\ast/p}} \norm{u}^{p^\ast} - \frac{\kappa}{q^\ast} \norm{u}^{p\, q^\ast/q}.
\]
Since $q < p^\ast < p\, q^\ast/q$, it follows from this that the origin is a strict local minimizer of $E$ when $0 < \lambda < \lambda_1(p)$. A similar argument using $W^{1,\A}_0(\Omega)\hookrightarrow L^r(\Omega)$ (see Proposition \ref{prop:embeddings1}-($ii$)) shows that when $r > p$, the origin is a strict local minimizer of $E$ for all $\lambda > 0$. On the other hand, $E(tu) \to - \infty$ as $t \to + \infty$ for any $u \in W^{1,\,\A}_0(\Omega) \setminus \set{0}$. So $E$ has the mountain pass geometry.

Let
\[
\beta := \inf_{\gamma \in \Gamma}\, \max_{u \in \gamma([0,1])}\, E(u) > 0
\]
be the mountain pass level, where
\[
\Gamma = \big\{\gamma \in C([0,1],W^{1,\,\A}_0(\Omega)) : \gamma(0) = 0,\, E(\gamma(1)) < 0\big\}
\]
is the class of paths in $W^{1,\,\A}_0(\Omega)$ joining the origin to the set $\big\{u \in W^{1,\,\A}_0(\Omega) : E(u) < 0\big\}$. A standard deformation argument shows that $E$ has a \PS{\beta} sequence $\seq{u_j}$. We will show that for all $\mu > 0$ and sufficiently small $b_\infty \ge 0$, in each of the cases in Theorem \ref{Theorem 1},
\begin{equation} \label{31}
\beta < \beta^\ast(\mu,b_\infty)
\end{equation}
and hence $\seq{u_j}$ has a subsequence that converges weakly to a nontrivial weak solution of problem \eqref{6} by Proposition \ref{Proposition 3}.

For any $u \in W^{1,\,\A}_0(\Omega) \setminus \set{0}$, there exists $t_u > 0$ such that $E(t_u u) < 0$ since $E(tu) \to - \infty$ as $t \to + \infty$. Then the line segment $\set{t_u\, tu : 0 \le t \le 1}$ belongs to $\Gamma$ and hence
\[
\beta \le \max_{0 \le t \le 1}\, E(t_u\, tu) \le \max_{t \ge 0}\, E(tu).
\]
So, to show that \eqref{31} holds, it suffices to show that
\begin{equation} \label{32}
\max_{t \ge 0}\, E(t u_0) < \beta^\ast(\mu,b_\infty)
\end{equation}
for some $u_0 \in W^{1,\,\A}_0(\Omega) \setminus \set{0}$. To construct such a function $u_0$, take $x_0 = 0$ for the sake of simplicity, let $\psi \in C^\infty_0(B_\rho(0))$ be a cut-off function such that $0 \le \psi \le 1$ and $\psi = 1$ on $B_{\rho/2}(0)$, and set
\[
u_\eps(x) = \frac{\psi(x)}{\left(\eps^{p/(p-1)} + |x|^{p/(p-1)}\right)^{(N-p)/p}}, \qquad v_\eps(x) = \frac{u_\eps(x)}{\pnorm[p^\ast]{u_\eps}}
\]
for $\eps > 0$. We will show that \eqref{32} holds for $u_0 = v_\eps$ with $\eps > 0$ sufficiently small.

By Dr{\'a}bek and Huang \cite{MR1473856}, we have the estimates
\begin{equation} \label{34}
\int_\Omega |\nabla v_\eps|^p\, dx = S_p + O\big(\eps^{(N-p)/(p-1)}\big)
\end{equation}
and
\begin{equation} \label{35}
\int_\Omega v_\eps^r\, dx = \begin{cases}
O\big(\eps^{[Np-(N-p)r]/p}\big), & r > \frac{N(p - 1)}{N - p}\\[7.5pt]
O\big(\eps^{N/p}\, |\log \eps|\big), & r = \frac{N(p - 1)}{N - p}\\[7.5pt]
O\big(\eps^{(N-p)r/p(p-1)}\big), & r < \frac{N(p - 1)}{N - p}
\end{cases}
\end{equation}
as $\eps \to 0$.

\begin{lemma} \label{Lemma 3}
If the following limit holds
\begin{equation} \label{217}
\lim_{\eps\to 0}\frac{\eps^{(N-p)/(p-1)}}{\dint_\Omega v_\eps^r\, dx} = 0,
\end{equation}
then there exist $\eps_0, b^\ast > 0$ such that
\[
\max_{t \ge 0}\, E(tv_{\eps_0}) < \beta^\ast(\mu,b_\infty)
\]
when $\mu > 0$ and $0 \le b_\infty < b^\ast$.
\end{lemma}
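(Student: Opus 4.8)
The goal is to estimate $\max_{t\ge 0}E(tv_\eps)$ from above by something strictly below $\beta^\ast(\mu,b_\infty)$, using the hypothesis \eqref{217} and taking $b_\infty$ small. The plan is to write out $E(tv_\eps)$ explicitly using the normalization $\int_\Omega v_\eps^{p^\ast}\,dx = 1$:
\[
E(tv_\eps) = \frac{t^p}{p}\int_\Omega|\nabla v_\eps|^p\,dx + \frac{t^q}{q}\int_\Omega a(x)|\nabla v_\eps|^q\,dx - \frac{\lambda t^r}{r}\int_\Omega v_\eps^r\,dx - \frac{\mu t^{p^\ast}}{p^\ast} - \frac{t^{q^\ast}}{q^\ast}\int_\Omega b(x)|v_\eps|^{q^\ast}\,dx.
\]
The key simplification is that $\supp\psi\subset B_\rho(x_0)$ and, by \eqref{33}, $a(x)=0$ on $B_\rho(x_0)$, so the $a(x)|\nabla v_\eps|^q$ term \emph{vanishes identically}. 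Thus $E(tv_\eps)$ reduces to a function of $t$ built from the four remaining terms, with $\int_\Omega|\nabla v_\eps|^p\,dx = S_p + O(\eps^{(N-p)/(p-1)})$ by \eqref{34}, the $v_\eps^r$ term of known order by \eqref{35}, and the $b(x)|v_\eps|^{q^\ast}$ term controlled by $b_\infty\int_\Omega|v_\eps|^{q^\ast}\,dx$.

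First I would bound the $b_\infty$ term: since $\int_\Omega|v_\eps|^{q^\ast}\,dx$ could blow up as $\eps\to 0$, I would fix $\eps$ \emph{first} (call it $\eps_0$) and only afterwards send $b_\infty\to 0$, which is exactly the quantifier structure in the statement ("there exist $\eps_0, b^\ast>0$"). For fixed $\eps_0$ the quantity $\int_\Omega|v_{\eps_0}|^{q^\ast}\,dx$ is just a finite constant, so the term $-\frac{t^{q^\ast}}{q^\ast}b_\infty\int_\Omega|v_{\eps_0}|^{q^\ast}\,dx$ is $\le 0$ and can be dropped when bounding $E$ from above; hence $\max_{t\ge 0}E(tv_{\eps_0})\le \max_{t\ge 0}\big[\frac{t^p}{p}(S_p+O(\eps_0^{(N-p)/(p-1)})) - \frac{\mu t^{p^\ast}}{p^\ast} - \frac{\lambda t^r}{r}\int_\Omega v_{\eps_0}^r\,dx\big]$. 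Next, the standard Brezis–Nirenberg computation: the function $g(t)=\frac{t^p}{p}A - \frac{\mu t^{p^\ast}}{p^\ast}$ (with $A = \int|\nabla v_\eps|^p$) attains its maximum at an explicit $t_\ast$ and $\max g = \frac{1}{N}\frac{A^{N/p}}{\mu^{(N-p)/p}}$; plugging $A = S_p + O(\eps^{(N-p)/(p-1)})$ and expanding gives $\max g = \frac{1}{N}\frac{S_p^{N/p}}{\mu^{(N-p)/p}} + O(\eps^{(N-p)/(p-1)})$. The subtracted term $\frac{\lambda t^r}{r}\int v_\eps^r$ is where \eqref{217} enters: one shows that near the maximizing $t_\ast$ this term is bounded below by a positive constant times $\int_\Omega v_\eps^r\,dx$, which by \eqref{217} dominates the $O(\eps^{(N-p)/(p-1)})$ error. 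A careful but routine argument (maximizing over $t$ in a neighborhood of $t_\ast$, or comparing $\max(g - h)$ with $\max g$ when $h\ge 0$ is bounded below on the relevant range) yields
\[
\max_{t\ge 0}E(tv_\eps) \le \frac{1}{N}\frac{S_p^{N/p}}{\mu^{(N-p)/p}} + O\big(\eps^{(N-p)/(p-1)}\big) - c\int_\Omega v_\eps^r\,dx
\]
for some $c>0$, and by \eqref{217} the right-hand side is strictly less than $\frac{1}{N}\frac{S_p^{N/p}}{\mu^{(N-p)/p}}$ for all sufficiently small $\eps$; fix such an $\eps_0$.

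Finally, to close the gap with $\beta^\ast$: by Proposition \ref{Proposition 4}, $\beta^\ast(\mu,b_\infty)\ge \frac{1}{N}\frac{S_p^{N/p}}{\mu^{(N-p)/p}} + \o(1)$ as $b_\infty\to 0$. Having fixed $\eps_0$ so that $\max_{t\ge 0}E(tv_{\eps_0}) \le \frac{1}{N}\frac{S_p^{N/p}}{\mu^{(N-p)/p}} - \delta$ for some $\delta>0$ (with the $-\frac{t^{q^\ast}}{q^\ast}b_\infty\int|v_{\eps_0}|^{q^\ast}$ term only helping), I would then choose $b^\ast>0$ small enough that $\beta^\ast(\mu,b_\infty) > \frac{1}{N}\frac{S_p^{N/p}}{\mu^{(N-p)/p}} - \delta \ge \max_{t\ge 0}E(tv_{\eps_0})$ for all $0\le b_\infty < b^\ast$, which is the desired conclusion. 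The main obstacle is the middle step — extracting, from the subcritical $v_\eps^r$ term, a lower bound of the right order that genuinely beats the $O(\eps^{(N-p)/(p-1)})$ correction coming from \eqref{34}; this is precisely what hypothesis \eqref{217} is engineered to guarantee, and verifying \eqref{217} in the three parameter regimes of Theorem \ref{Theorem 1} (via the asymptotics \eqref{35}) is presumably handled separately. A secondary technical point is making the "$\max(g-h)\le \max g - c\int v_\eps^r$" estimate rigorous uniformly in $\eps$, e.g. by restricting attention to $t$ in a fixed compact interval around the limiting maximizer $t_\ast = (S_p^{N/p}/\mu^{N/p})^{(p-1)/N(p-1)}$ and using that $g$ is strictly concave-like there.
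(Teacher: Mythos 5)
Your proposal is correct and follows the paper's overall skeleton --- reduce to the fibered maximum along $tv_\eps$ (the $a$-term vanishes on $B_\rho(0)$ by \eqref{33}), show that this maximum is strictly below $L:=\frac{1}{N}\,S_p^{N/p}/\mu^{(N-p)/p}$ for all small $\eps$, then fix such an $\eps_0$ and invoke \eqref{15} of Proposition \ref{Proposition 4} to choose $b^\ast$ --- but you carry out the key inner estimate by a genuinely different mechanism. The paper argues by contradiction: assuming $\varphi_{\eps_j}(t_j)\ge L$ with $t_j\varphi_{\eps_j}'(t_j)=0$ along $\eps_j\to0$, it passes to the limit in the critical-point equation, uses the mean value theorem together with \eqref{217} to force $t_j\le t_0$, and then extracts the contradiction $\lambda(1/r-1/p^\ast)\,t_0^r\le0$. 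You instead estimate directly: you drop the nonpositive $b$-term (the paper in fact notes $b\equiv0$ on $B_\rho(0)$ since \eqref{2} gives $\supp(b)\subset\supp(a)$, but dropping it is equally valid and even spares that observation), use the exact formula $\max_{t\ge0}\big[\tfrac{t^p}{p}A_\eps-\tfrac{\mu}{p^\ast}t^{p^\ast}\big]=\tfrac{1}{N}A_\eps^{N/p}/\mu^{(N-p)/p}=L+O\big(\eps^{(N-p)/(p-1)}\big)$, and then subtract $c\int_\Omega v_\eps^r\,dx$ after localizing the maximizer of the full fiber map to a fixed interval $[t_1,t_2]$ with $t_1>0$, so that \eqref{217} lets the subcritical term beat the $O(\eps^{(N-p)/(p-1)})$ error. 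The localization you flag as the delicate point does go through: the fiber maps converge uniformly on compact sets to $\tfrac{S_p}{p}t^p-\tfrac{\mu}{p^\ast}t^{p^\ast}$, so their maxima stay above $L/2$ for small $\eps$, while the values near $t=0$ are below $L/2$ and those for large $t$ are negative; hence the maximizers are trapped in $[t_1,t_2]$, where the subcritical term is at least $\tfrac{\lambda}{r}t_1^r\int_\Omega v_\eps^r\,dx$. So your more direct route is complete in outline and trades the paper's sequence/mean-value-theorem contradiction for a uniform localization of the maximizer; the only slip is cosmetic: your closing formula for the limiting maximizer should read $t_\ast=(S_p/\mu)^{(N-p)/p^2}$, but its exact value is never used in your argument.
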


\begin{proof}
Since \eqref{2} implies that $\supp(b) \subset \supp(a)$, $a = 0 = b$ a.e.\! in $B_\rho(0)$ by \eqref{33}. Since $\supp(v_\eps) \subset B_\rho(0)$ and $\pnorm[p^\ast]{v_\eps} = 1$, this gives
\[
E(tv_\eps) = \frac{t^p}{p} \int_\Omega |\nabla v_\eps|^p\, dx - \frac{\lambda t^r}{r} \int_\Omega v_\eps^r\, dx - \frac{\mu t^{p^\ast}}{p^\ast} =: \varphi_\eps(t).
\]
So, in view of \eqref{15}, it suffices to show that
\[
\max_{t \ge 0}\, \varphi_\eps(t) < \frac{1}{N}\, \frac{S_p^{N/p}}{\mu^{(N-p)/p}}
\]
for all sufficiently small $\eps > 0$. Suppose this is false. Then there are sequences $\eps_j \to 0$ and $t_j > 0$ such that
\begin{equation} \label{218}
\varphi_{\eps_j}(t_j) = \frac{t_j^p}{p} \int_\Omega |\nabla v_j|^p\, dx - \frac{\lambda t_j^r}{r} \int_\Omega v_j^r\, dx - \frac{\mu t_j^{p^\ast}}{p^\ast} \ge \frac{1}{N}\, \frac{S_p^{N/p}}{\mu^{(N-p)/p}}
\end{equation}
and
\begin{equation} \label{219}
t_j\, \varphi_{\eps_j}'(t_j) = t_j^p \int_\Omega |\nabla v_j|^p\, dx - \lambda t_j^r \int_\Omega v_j^r\, dx - \mu t_j^{p^\ast} = 0,
\end{equation}
where $v_j = v_{\eps_j}$. By \eqref{34} and \eqref{35},
\[
\int_\Omega |\nabla v_j|^p\, dx \to S_p, \qquad \int_\Omega v_j^r\, dx \to 0.
\]
So \eqref{218} implies that the sequence $\seq{t_j}$ is bounded and hence converges to some $t_0 > 0$ for a renamed subsequence. Passing to the limit in \eqref{219} gives
\begin{equation} \label{220}
S_p\, t_0^p - \mu\, t_0^{p^\ast} = 0,
\end{equation}
so $t_0 = \left(S_p/\mu\right)^{(N-p)/p^2}$.
Subtracting \eqref{220} from \eqref{219} and using \eqref{34} gives
\[
S_p\, \big(t_j^p - t_0^p\big) - \lambda t_j^r \int_\Omega v_j^r\, dx - \mu\, \big(t_j^{p^\ast} - t_0^{p^\ast}\big) = O\big(\eps_j^{(N-p)/(p-1)}\big).
\]
Then
\begin{equation} \label{221}
\left(p\, S_p\, \sigma_j^{p-1} - p^\ast \mu\, \tau_j^{p^\ast - 1}\right) (t_j - t_0) = \lambda t_j^r \int_\Omega v_j^r\, dx + O\big(\eps_j^{(N-p)/(p-1)}\big)
\end{equation}
for some $\sigma_j,\, \tau_j$ between $t_0$ and $t_j$ by the mean value theorem. Since $t_j \to t_0$, $\sigma_j, \tau_j \to t_0$. So
\[
p\, S_p\, \sigma_j^{p-1} - p^\ast \mu\, \tau_j^{p^\ast - 1} \to p\, S_p\, t_0^{p-1} - p^\ast \mu\, t_0^{p^\ast - 1} = - (p^\ast - p)\, \mu\, t_0^{p^\ast - 1} < 0
\]
by \eqref{220}. Thus \eqref{221} and \eqref{217} imply that $t_j \le t_0$ for all sufficiently large $j$. Dividing \eqref{219} by $p^\ast$, subtracting from \eqref{218}, and using \eqref{34} and \eqref{220} gives
\[
\frac{1}{N}\, S_p\, t_j^p - \lambda \left(\frac{1}{r} - \frac{1}{p^\ast}\right) t_j^r \int_\Omega v_j^r\, dx \ge \frac{1}{N}\, S_p\, t_0^p + O\big(\eps_j^{(N-p)/(p-1)}\big).
\]
This together with $t_j \le t_0$ and \eqref{217} gives
\[
\lambda \left(\frac{1}{r} - \frac{1}{p^\ast}\right) t_0^r \le 0,
\]
a contradiction since $\lambda, t_0 > 0$ and $r < p^\ast$.
\end{proof}

In view of Lemma \ref{Lemma 3}, it suffices to show that \eqref{217} holds in each of the cases in Theorem~\ref{Theorem 1} to complete its proof. Equation \eqref{35} gives us the estimate
\begin{equation} \label{36}
\frac{\eps^{(N-p)/(p-1)}}{\dint_\Omega v_\eps^r\, dx} = \begin{cases}
O\big(\eps^{[(N-p)(p-1)r-(Np-2N+p)p]/p(p-1)}\big), & r > \frac{N(p - 1)}{N - p}\\[7.5pt]
O\big(\eps^{(N-p^2)/p(p-1)}/|\log \eps|\big), & r = \frac{N(p - 1)}{N - p}\\[7.5pt]
O\big(\eps^{(N-p)(p-r)/p(p-1)}\big), & r < \frac{N(p - 1)}{N - p}
\end{cases}
\end{equation}
as $\eps \to 0$.

({\em i}) Let $N \ge p^2$ and $r = p$. Then \eqref{36} gives
\[
\frac{\eps^{(N-p)/(p-1)}}{\dint_\Omega v_\eps^r\, dx} = \begin{cases}
O\big(\eps^{(N-p^2)/(p-1)}\big), & N > p^2\\[7.5pt]
O\big(1/|\log \eps|\big), & N = p^2
\end{cases}\qquad\mbox{as }\eps\to 0,
\]
so \eqref{217} follows.

({\em ii}) Let $N \ge p^2$ and $p < r < p^\ast$. Then $p \ge N(p - 1)/(N - p)$ and hence $r > N(p - 1)/(N - p)$, so \eqref{36} gives
\[
\frac{\eps^{(N-p)/(p-1)}}{\dint_\Omega v_\eps^r\, dx} = O\big(\eps^{[(N-p)(p-1)r-(Np-2N+p)p]/p(p-1)}\big)\qquad\mbox{as }\eps\to 0.
\]
Since $(N - p)(p - 1)\, r - (Np - 2N + p)\, p > (N - p^2)\, p \ge 0$, \eqref{217} follows.

({\em iii}) Let $N < p^2$ and $(Np - 2N + p)\, p/(N - p)(p - 1) < r < p^\ast$. Then $(Np - 2N + p)\, p/(N - p)(p - 1) > N(p - 1)/(N - p)$ and hence $r > N(p - 1)/(N - p)$, so \eqref{36} gives
\[
\frac{\eps^{(N-p)/(p-1)}}{\dint_\Omega v_\eps^r\, dx} = O\big(\eps^{[(N-p)(p-1)r-(Np-2N+p)p]/p(p-1)}\big)\qquad\mbox{as }\eps\to 0.
\]
Since $(N - p)(p - 1)\, r - (Np - 2N + p)\, p > 0$ by the hypothesis on $r$, \eqref{217} follows.\hfill \qedsymbol

\subsection{Proof of Theorem \ref{Theorem 2}}
The variational functional associated with problem \eqref{8} is
\[
E(u) = \int_\Omega \left[\frac{1}{p}\, |\nabla u|^p + \frac{a(x)}{q}\, |\nabla u|^q - \frac{c(x)}{s}\, |u|^s - \frac{\mu}{p^\ast}\, |u|^{p^\ast} - \frac{b(x)}{q^\ast}\, |u|^{q^\ast}\right] dx, \quad u \in W^{1,\,\A}_0(\Omega).
\]
Fix $p < r < p^\ast$ and let
\[
\C(x,t) = t^r + c(x)\, t^s, \quad (x,t) \in \Omega \times [0,\infty).
\]
Then
\[
\int_\Omega c(x)\, |u|^s\, dx \le \int_\Omega \big(|u|^r + c(x)\, |u|^s\big)\, dx \le \max \set{\norm[\C]{u}^r,\norm[\C]{u}^s} \le \norm[\C]{u}^r
\]
for all $u \in L^\C(\Omega)$ with $\norm[\C]{u} \le 1$, and $W^{1,\,\A}_0(\Omega)$ is continuously embedded in $L^\C(\Omega)$ by Proposition \ref{prop:subcritical}, so an argument similar to that in the proof of Theorem \ref{Theorem 1} shows that $E$ has the mountain pass geometry. As in that proof, it now suffices to show that for all $b_\infty > 0$ and sufficiently small $\mu \ge 0$,
\begin{equation} \label{38}
\max_{t \ge 0}\, E(t u_0) < \beta^\ast(\mu,b_\infty)
\end{equation}
for some $u_0 \in W^{1,\,\A}_0(\Omega) \setminus \set{0}$. Take $x_0 = 0$, let $\psi \in C^\infty_0(B_\rho(0))$ be a cut-off function such that $0 \le \psi \le 1$ and $\psi = 1$ on $B_{\rho/2}(0)$, and set
\[
u_{\eps,\delta}(x) = \frac{\psi(x/\delta)}{\left(\eps^{q/(q-1)} + |x|^{q/(q-1)}\right)^{(N-q)/q}}, \qquad v_{\eps,\delta}(x) = \frac{u_{\eps,\delta}(x)}{\pnorm[q^\ast]{u_{\eps,\delta}}}
\]
for $\eps > 0$ and $0 < \delta \le 1$. We will show that \eqref{38} holds for $u_0 = v_{\eps,\delta}$ with suitably chosen $\eps, \delta > 0$.

We have the estimates
\begin{equation} \label{214}
\int_\Omega |\nabla v_{\eps,\delta}|^q\, dx = S_q + O\big((\eps/\delta)^{(N-q)/(q-1)}\big),
\end{equation}
\begin{equation} \label{215}
\int_\Omega |\nabla v_{\eps,\delta}|^p\, dx = \begin{cases}
O\big(\eps^{N(q-p)/q}\big), & p > \frac{N(q - 1)}{N - 1}\\[7.5pt]
O\big(\eps^{N(N-q)/(N-1)q}\, |\log\, (\eps/\delta)|\big), & p = \frac{N(q - 1)}{N - 1}\\[7.5pt]
O\big(\eps^{(N-q)p/q(q-1)}\, \delta^{[N(q-1)-(N-1)p]/(q-1)}\big), & p < \frac{N(q - 1)}{N - 1},
\end{cases}
\end{equation}
and
\begin{equation} \label{216}
\int_\Omega v_{\eps,\delta}^s\, dx = \begin{cases}
O\big(\eps^{[Nq-(N-q)s]/q}\big), & s > \frac{N(q - 1)}{N - q}\\[7.5pt]
O\big(\eps^{N/q}\, |\log\, (\eps/\delta)|\big), & s = \frac{N(q - 1)}{N - q}\\[7.5pt]
O\big(\eps^{(N-q)s/q(q-1)}\, \delta^{[N(q-1)-(N-q)s]/(q-1)}\big), & s < \frac{N(q - 1)}{N - q}
\end{cases}
\end{equation}
as $\eps \to 0$ and $\eps/\delta \to 0$ (see Ho et al.\! \cite[Lemma 3.2]{HoPeSi}).

\begin{lemma} \label{Lemma 4}
If $\eps/\delta \to 0$ and the following limits hold
\begin{equation} \label{222}
\lim_{\eps\to 0}\frac{\dint_\Omega |\nabla v_{\eps,\delta}|^p\, dx}{\dint_\Omega v_{\eps,\delta}^s\, dx} = 0, \qquad \lim_{\eps\to 0}\frac{(\eps/\delta)^{(N-q)/(q-1)}}{\dint_\Omega v_{\eps,\delta}^s\, dx} = 0,
\end{equation}
then there exist $\eps_0,\, \delta_0,\, \mu^\ast > 0$ such that
\[
\max_{t \ge 0}\, E(tv_{\eps_0,\delta_0}) < \beta^\ast(\mu,b_\infty)
\]
when $0 \le \mu < \mu^\ast$ and $b_\infty > 0$.
\end{lemma}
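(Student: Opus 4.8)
The plan is to adapt the argument of Lemma~\ref{Lemma 3}, with the exponent $q$ playing the role of $p$ and with $\int_\Omega|\nabla v_{\eps,\delta}|^p\,dx$ treated as an extra lower-order perturbation. First I would exploit \eqref{9}: since $0<\delta\le1$ and $\psi$ is compactly supported in $B_\rho(0)$, one has $\supp(v_{\eps,\delta})\subset B_\rho(0)$, where $a\equiv a_0$, $b\equiv b_\infty$, $c\ge c_0$, while $\norm[L^{q^\ast}(\Omega)]{v_{\eps,\delta}}=1$. Dropping the nonnegative term $\tfrac{\mu t^{p^\ast}}{p^\ast}\int_\Omega|v_{\eps,\delta}|^{p^\ast}\,dx$ and bounding $\int_\Omega c(x)\,|v_{\eps,\delta}|^s\,dx\ge c_0\int_\Omega v_{\eps,\delta}^s\,dx$ yields, for $t\ge0$,
\[
E(tv_{\eps,\delta})\le\varphi_{\eps,\delta}(t):=\frac{t^p}{p}\,A_p+\frac{a_0\,t^q}{q}\,A_q-\frac{c_0\,t^s}{s}\,B_s-\frac{b_\infty\,t^{q^\ast}}{q^\ast},
\]
where $A_p:=\int_\Omega|\nabla v_{\eps,\delta}|^p\,dx$, $A_q:=\int_\Omega|\nabla v_{\eps,\delta}|^q\,dx$, $B_s:=\int_\Omega v_{\eps,\delta}^s\,dx$. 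Writing $L:=\tfrac1N(a_0\,S_q)^{N/q}/b_\infty^{(N-q)/q}$, estimate \eqref{16} reads $\beta^\ast(\mu,b_\infty)\ge L+\o(1)$ as $\mu\to0$; hence it is enough to find $\eps_0,\delta_0>0$ (with $\eps_0/\delta_0$ small) such that $\max_{t\ge0}\varphi_{\eps_0,\delta_0}(t)<L$ strictly, for then, taking $\eta:=\tfrac12\big(L-\max_{t\ge0}\varphi_{\eps_0,\delta_0}(t)\big)>0$ and $\mu^\ast>0$ so small that $\beta^\ast(\mu,b_\infty)>L-\eta$ for $0\le\mu<\mu^\ast$, we get $\max_{t\ge0}E(tv_{\eps_0,\delta_0})\le\max_{t\ge0}\varphi_{\eps_0,\delta_0}(t)=L-2\eta<\beta^\ast(\mu,b_\infty)$.

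The crucial observation is that the \emph{leading-order} profile $t\mapsto\tfrac{a_0}{q}S_q\,t^q-\tfrac{b_\infty}{q^\ast}t^{q^\ast}$ already attains its maximum exactly at $L$, at the point $t_0=(a_0S_q/b_\infty)^{1/(q^\ast-q)}$ (short computation using $\tfrac1q-\tfrac1{q^\ast}=\tfrac1N$ and $q^\ast-q=q^2/(N-q)$). So a strict gain $\max_t\varphi_{\eps,\delta}<L$ must come from the perturbations, and it is the \emph{negative} term $-\tfrac{c_0}{s}t^s B_s$ that has to outweigh the \emph{positive} ones $\tfrac1p t^p A_p$ and $\tfrac{a_0}{q}t^q(A_q-S_q)=\tfrac{a_0}{q}t^q\,O\big((\eps/\delta)^{(N-q)/(q-1)}\big)$ (the latter by \eqref{214}); this is exactly the content of \eqref{222}. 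To make it precise I would argue by contradiction: if $\max_{t\ge0}\varphi_{\eps_j,\delta_j}(t)\ge L$ along sequences $\eps_j\to0$, $\eps_j/\delta_j\to0$, choose a maximizer $t_j>0$, so $\varphi_{\eps_j,\delta_j}(t_j)\ge L$ and $t_j\,\varphi_{\eps_j,\delta_j}'(t_j)=0$. By \eqref{214}, \eqref{215}, \eqref{216} we have $A_{p,j}\to0$, $A_{q,j}\to S_q$, $B_{s,j}\to0$; inserting this into the two relations and using $p,s<q^\ast$ shows $\seq{t_j}$ is bounded and bounded away from $0$, hence $t_j\to t_0>0$ (along a subsequence), and passing to the limit in $t_j\varphi_{\eps_j,\delta_j}'(t_j)=0$ gives $a_0S_q\,t_0^q=b_\infty\,t_0^{q^\ast}$, whence $t_0$ equals the value above.

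Next I would subtract $a_0S_q t_0^q-b_\infty t_0^{q^\ast}=0$ from $t_j\varphi_{\eps_j,\delta_j}'(t_j)=0$ and apply the mean value theorem to $t^q$, $t^{q^\ast}$, obtaining
\[
\big(q\,a_0S_q\,\sigma_j^{q-1}-q^\ast b_\infty\,\tau_j^{q^\ast-1}\big)(t_j-t_0)=c_0\,t_j^s\,B_{s,j}-t_j^p\,A_{p,j}-a_0\,t_j^q\,O\big((\eps_j/\delta_j)^{(N-q)/(q-1)}\big)
\]
with $\sigma_j,\tau_j$ between $t_0$ and $t_j$; the bracket converges to $(q-q^\ast)b_\infty t_0^{q^\ast-1}<0$ (using $a_0S_q t_0^{q-1}=b_\infty t_0^{q^\ast-1}$), whereas dividing the right-hand side by $B_{s,j}>0$ and using \eqref{222} shows it is eventually positive, of order $c_0 t_0^s B_{s,j}$; hence $t_j<t_0$ for $j$ large. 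Finally, subtracting $\tfrac1{q^\ast}t_j\varphi_{\eps_j,\delta_j}'(t_j)=0$ from $\varphi_{\eps_j,\delta_j}(t_j)\ge L$ and using $\tfrac1q-\tfrac1{q^\ast}=\tfrac1N$, $A_{q,j}\le S_q+O\big((\eps_j/\delta_j)^{(N-q)/(q-1)}\big)$, $t_j<t_0$, and $\tfrac{a_0}{N}S_q t_0^q=L$, we are left with
\[
\Big(\frac1s-\frac1{q^\ast}\Big)c_0\,t_j^s\,B_{s,j}\le\Big(\frac1p-\frac1{q^\ast}\Big)t_j^p\,A_{p,j}+\frac{a_0}{N}\,t_j^q\,O\big((\eps_j/\delta_j)^{(N-q)/(q-1)}\big);
\]
dividing by $B_{s,j}$ and letting $j\to\infty$, the left side tends to $\big(\tfrac1s-\tfrac1{q^\ast}\big)c_0 t_0^s>0$ (as $s<q^\ast$) while the right side tends to $0$ by \eqref{222} --- a contradiction, which gives $\max_{t\ge0}\varphi_{\eps_0,\delta_0}(t)<L$ for a suitable choice of $\eps_0,\delta_0$ and completes the proof.

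I expect the mean value step yielding $t_j<t_0$ to be the main obstacle: it forces one to track the sign of every perturbation and to confirm that $-\tfrac{c_0}{s}t^s B_s$ genuinely dominates, and this is precisely where both limits in \eqref{222} are consumed. As with \eqref{217} and \eqref{36} in the proof of Theorem~\ref{Theorem 1}, it will still remain to verify separately --- within the parameter ranges of Theorem~\ref{Theorem 2} --- that \eqref{222} together with $\eps/\delta\to0$ can actually be achieved by a suitable choice of $\delta=\delta(\eps)$; that verification is not part of Lemma~\ref{Lemma 4}.
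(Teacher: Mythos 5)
Your proposal is correct and follows essentially the same route as the paper's proof: bound $E(tv_{\eps,\delta})$ by the same $\mu$-independent function $\varphi_{\eps,\delta}$ using \eqref{9}, reduce via \eqref{16} to showing $\max_{t\ge 0}\varphi_{\eps,\delta}(t)<\tfrac1N (a_0 S_q)^{N/q}/b_\infty^{(N-q)/q}$, and argue by contradiction with maximizers $t_j\to t_0$, the mean value theorem to get $t_j\le t_0$, and the two limits in \eqref{222} to reach the final contradiction. The only additions are explicit bookkeeping (the value $L$, the choice of $\eta$ and $\mu^\ast$) that the paper leaves implicit in the phrase ``in view of \eqref{16}, it suffices.''
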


\begin{proof}
Since $\supp(v_{\eps,\delta}) \subset B_\rho(0)$ and $\pnorm[q^\ast]{v_{\eps,\delta}} = 1$, \eqref{9} gives
\[
E(tv_{\eps,\delta}) \le \frac{t^p}{p} \int_\Omega |\nabla v_{\eps,\delta}|^p\, dx + \frac{a_0\, t^q}{q} \int_\Omega |\nabla v_{\eps,\delta}|^q\, dx - \frac{c_0\, t^s}{s} \int_\Omega v_{\eps,\delta}^s\, dx - \frac{b_\infty\, t^{q^\ast}}{q^\ast} =: \varphi_\eps(t)
\]
for all $\mu \ge 0$. So, in view of \eqref{16}, it suffices to show that
\[
\max_{t \ge 0}\, \varphi_\eps(t) < \frac{1}{N}\, \frac{(a_0\, S_q)^{N/q}}{b_\infty^{(N-q)/q}}
\]
for all sufficiently small $\eps > 0$. Suppose this is false. Then there are sequences $\eps_j \to 0$ and $t_j > 0$ such that
\begin{equation} \label{2180}
\varphi_{\eps_j}(t_j) = \frac{t_j^p}{p} \int_\Omega |\nabla v_j|^p\, dx + \frac{a_0\, t_j^q}{q} \int_\Omega |\nabla v_j|^q\, dx - \frac{c_0\, t_j^s}{s} \int_\Omega v_j^s\, dx - \frac{b_\infty\, t_j^{q^\ast}}{q^\ast} \ge \frac{1}{N}\, \frac{(a_0\, S_q)^{N/q}}{b_\infty^{(N-q)/q}}
\end{equation}
and
\begin{equation} \label{2190}
t_j\, \varphi_{\eps_j}'(t_j) = t_j^p \int_\Omega |\nabla v_j|^p\, dx + a_0\, t_j^q \int_\Omega |\nabla v_j|^q\, dx - c_0\, t_j^s \int_\Omega v_j^s\, dx - b_\infty\, t_j^{q^\ast} = 0,
\end{equation}
where $v_j = v_{\eps_j,\delta_j}$ and $\delta_j = \delta(\eps_j)$. By \eqref{214}--\eqref{216},
\[
\int_\Omega |\nabla v_j|^q\, dx \to S_q, \qquad \int_\Omega |\nabla v_j|^p\, dx \to 0, \qquad \int_\Omega v_j^s\, dx \to 0.
\]
So \eqref{2180} implies that the sequence $\seq{t_j}$ is bounded and hence converges to some $t_0 > 0$ for a renamed subsequence. Passing to the limit in \eqref{2190} gives
\begin{equation} \label{2200}
a_0\, S_q\, t_0^q - b_\infty\, t_0^{q^\ast} = 0,
\end{equation}
so $t_0 = \left(a_0\, S_q/b_\infty\right)^{(N-q)/q^2}$.
Subtracting \eqref{2200} from \eqref{2190} and using \eqref{214} gives
\[
t_j^p \int_\Omega |\nabla v_j|^p\, dx + a_0\, S_q\, \big(t_j^q - t_0^q\big) - c_0\, t_j^s \int_\Omega v_j^s\, dx - b_\infty\, \big(t_j^{q^\ast} - t_0^{q^\ast}\big) = O\big((\eps_j/\delta_j)^{(N-q)/(q-1)}\big).
\]
Then
\begin{multline} \label{2210}
\left(q a_0\, S_q\, \sigma_j^{q-1} - q^\ast\, b_\infty\, \tau_j^{q^\ast - 1}\right) (t_j - t_0) = c_0\, t_j^s \int_\Omega v_j^s\, dx - t_j^p \int_\Omega |\nabla v_j|^p\, dx\\[7.5pt]
+ O\big((\eps_j/\delta_j)^{(N-q)/(q-1)}\big)
\end{multline}
for some $\sigma_j,\, \tau_j$ between $t_0$ and $t_j$ by the mean value theorem. Since $t_j \to t_0$, $\sigma_j, \tau_j \to t_0$ and hence
\[
q a_0\, S_q\, \sigma_j^{q-1} - q^\ast\, b_\infty\, \tau_j^{q^\ast - 1} \to q a_0\, S_q\, t_0^{q-1} - q^\ast\, b_\infty\, t_0^{q^\ast - 1} = - (q^\ast - q)\, b_\infty\, t_0^{q^\ast - 1} < 0
\]
by \eqref{2200}. So \eqref{2210} and \eqref{222} imply that $t_j \le t_0$ for all sufficiently large $j$. Dividing \eqref{2190} by $q^\ast$, subtracting from \eqref{2180}, and using \eqref{214} and \eqref{2200} gives
\begin{multline*}
\left(\frac{1}{p} - \frac{1}{q^\ast}\right) t_j^p \int_\Omega |\nabla v_j|^p\, dx + \frac{1}{N}\, a_0\, S_q\, t_j^q - c_0 \left(\frac{1}{s} - \frac{1}{q^\ast}\right) t_j^s \int_\Omega v_j^s\, dx \ge \frac{1}{N}\, a_0\, S_q\, t_0^q\\[7.5pt]
+ O\big((\eps_j/\delta_j)^{(N-q)/(q-1)}\big).
\end{multline*}
This together with $t_j \le t_0$ and \eqref{222} gives $c_0 \left(\frac{1}{s} - \frac{1}{q^\ast}\right) t_0^s \le 0$,
a contradiction since $c_0, t_0 > 0$ and $s < q^\ast$.
\end{proof}

In view of Lemma \ref{Lemma 4}, it only remains to find a $\delta = \delta(\eps) \in (0,1]$ such that $\eps/\delta \to 0$ and \eqref{222} holds as $\eps \to 0$ in each of the two cases in Theorem \ref{Theorem 2}.

({\em i}) Let $1 < p < N(q - 1)/(N - 1)$ and $N^2 (q - 1)/(N - 1)(N - q) < s < q^\ast$. We take $\delta = \eps^\kappa$, where $\kappa \in [0,1)$ is to be determined. Since
\[
s > \frac{N^2 (q - 1)}{(N - 1)(N - q)} > \frac{N(q - 1)}{N - q},
\]
\eqref{215} and \eqref{216} give
\[
\frac{\dint_{\R^N} |\nabla v_{\eps,\delta}|^p\, dx}{\dint_{\R^N} v_{\eps,\delta}^s\, dx} 
 = O\left(\eps_j^{\frac{1}{q}[(N-q)(s+\frac{p}{q-1})-Nq]+\frac{\kappa}{q-1}[N(q-1)-(N-1)p]}\right)
= O\left(\eps^{\frac{\kappa - \underline{\kappa}}{q-1}[N(q-1)-(N-1)p]}\right),
\]
where
\[
\underline{\kappa} = \frac{Nq(q-1)-(N-q)(q-1)s-(N-q)p}{[N(q-1)-(N-1)p]q},
\]
and \eqref{216} gives
\[
\frac{(\eps/\delta)^{(N-q)/(q-1)}}{\dint_{\R^N} v_{\eps,\delta}^s\, dx} 
= O\left(\eps^{\frac{1}{q(q-1)}[(N-q)(q-1)s-(Nq-2N+q)q]-\frac{\kappa(N-q)}{q-1}}\right)
= O\left(\eps^{\frac{\overline{\kappa} - \kappa}{q-1}(N-q)}\right),
\]
where
\[
\overline{\kappa} = \frac{(N-q)(q-1)s-(Nq-2N+q)q}{(N-q)q}.
\]
We want to choose $\kappa \in [0,1)$ so that $\kappa > \underline{\kappa}$ and $\kappa < \overline{\kappa}$. This is possible if and only if $\underline{\kappa} < \overline{\kappa}$, $\underline{\kappa} < 1$, and $\overline{\kappa} > 0$. Tedious calculations show that these inequalities are equivalent to
\[
s > \frac{N^2 (q - 1)}{(N - 1)(N - q)},\quad
s > \frac{Np}{N - q},\quad\mbox{and}\quad
s > \frac{N^2 (q - 1)}{(N - 1)(N - q)} - \frac{N - q}{(N - 1)(q - 1)},
\]
respectively, all of which hold under our assumptions on $p$ and $s$.

({\em ii}) Let $N(q - 1)/(N - 1) \le p < q$ and $Np/(N - q) < s < q^\ast$. We take $\delta = 1$. Since
\[
s > \frac{Np}{N - q} \ge \frac{N^2 (q - 1)}{(N - 1)(N - q)} > \frac{N(q - 1)}{N - q},
\]
\eqref{215} and \eqref{216} give
\[
\frac{\dint_{\R^N} |\nabla v_{\eps,\delta}|^p\, dx}{\dint_{\R^N} v_{\eps,\delta}^s\, dx} = \begin{cases}
O\big(\eps^{[(N-q)s-Np]/q}\big), & p > \frac{N(q - 1)}{N - 1}\\[7.5pt]
O\big(\eps^{[(N-q)s-Np]/q}\, |\log \eps|\big), & p = \frac{N(q - 1)}{N - 1}
\end{cases}
\]
and
\[
\frac{(\eps/\delta)^{(N-q)/(q-1)}}{\dint_{\R^N} v_{\eps,\delta}^s\, dx} = O\left(\eps^{\frac{1}{q(q-1)}[(N-q)(q-1)s-(Nq-2N+q)q]}\right).
\]
Since $s > Np/(N - q)$, the first limit in \eqref{222} holds. The second limit also holds since
\[
\frac{Np}{N - q} \ge \frac{N^2 (q - 1)}{(N - 1)(N - q)} > \frac{(Nq - 2N + q) q}{(N - q)(q - 1)}. \hfill \qedsymbol
\]

\section{Proofs of nonexistence results for the local case}\label{sec6}

In this section we prove the Poho\v{z}aev type identity \eqref{eq:poho} and derive subsequent nonexistence results for the double phase problem \eqref{1}, which we report here for the reader's convenience
\begin{equation}\label{eq:P-poho}
\left\{\begin{aligned}
- \divg \left(|\nabla u|^{p-2}\, \nabla u + a(x)\, |\nabla u|^{q-2}\, \nabla u\right) & = f(x,u) && \text{in } \Omega\\[10pt]
u & = 0 && \text{on } \bdry{\Omega},
\end{aligned}\right.
\end{equation}
where $\Omega\subset\mathbb R^N$ is a $C^1$ bounded domain, $1 < p < q < N$, $q/p \le N/(N-1)$, and $0\le a \in C^1(\Omega)$.

\subsection{Proof of Theorem \ref{thm:poho}}
Multiplying the left-hand side of the equation in \eqref{eq:P-poho} by $(x\cdot\nabla u)$, and integrating by parts over $\Omega$, we get by straightforward calculations
\begin{equation}\label{eq:poho-LHS}
\begin{aligned}
& - \int_\Omega\divg \left(|\nabla u|^{p-2}\, \nabla u + a(x)\, |\nabla u|^{q-2}\, \nabla u\right)(x\cdot\nabla u)\,dx\\
& \hspace{1cm}= \int_\Omega\left[\left(1-\frac{N}{p}\right)|\nabla u|^p+\left(1-\frac{N}{q}\right)a(x)|\nabla u|^q\right]\,dx - \int_\Omega \frac{|\nabla u|^q}{q}(\nabla a\cdot x)\,dx\\
& \hspace{5cm}-\int_{\partial\Omega}\left[\left(1-\frac{1}{p}\right)|\nabla u|^p+\left(1-\frac{1}{q}\right)a(x)|\nabla u|^q\right](x\cdot\nu)d\sigma.
\end{aligned}
\end{equation}
Then, performing the same operations on the right-hand side, we have by straightforward calculations
\begin{equation}\label{eq:poho-RHS}
 - \int_\Omega f(x,u)\,(x\cdot\nabla u)\,dx = -N\int_\Omega F(x,u)\, dx.
\end{equation}
Hence, using that $u$ solves the equation in \eqref{eq:P-poho}, by \eqref{eq:poho-LHS} and \eqref{eq:poho-RHS}, we obtain the following identity
\begin{equation}\label{eq:poho-RHS-LHS}
\begin{aligned}
&\int_\Omega\left(\frac{1}{p^\ast}|\nabla u|^p +\frac{1}{q^\ast} a(x)|\nabla u|^q\right) dx+ \frac{1}{Nq}\int_\Omega |\nabla u|^q(\nabla a\cdot x)\,dx\\
&\hspace{3cm}+\frac{1}{N}\int_{\partial\Omega}\left[\left(1-\frac{1}{p}\right)|\nabla u|^p+\left(1-\frac{1}{q}\right)a(x)|\nabla u|^q\right](x\cdot\nu)d\sigma\\
&\hspace{10.5cm} = \int_\Omega F(x,u)\, dx
\end{aligned}
\end{equation}
Now, multiplying the equation in \eqref{eq:P-poho} by $u/q^\ast$ and integrating by parts over $\Omega$, we also have
\begin{equation}\label{eq:poho-aux}
\frac{1}{q^\ast}\int_\Omega\left(|\nabla u|^p+a(x)|\nabla u|^q\right)\,dx=\frac{1}{q^\ast}\int_\Omega f(x,u)u\,dx
\end{equation}
Finally, subtracting \eqref{eq:poho-aux} from \eqref{eq:poho-RHS-LHS} and observing that $1/p^\ast-1/q^\ast=1/p-1/q$ and $|\nabla u|=|\partial_\nu u|$ over $\partial\Omega$, in view of the homogeneous boundary conditions, we obtain the desired identity \eqref{eq:poho} and conclude the proof.

\subsection{Proofs of Theorems \ref{thm:nonex} and \ref{thm:small}}
In the present subsection we assume further that $\Omega$ is starshaped. Without loss of generality we suppose that $0\in \Omega$ and $\Omega$ is starshaped with respect to the origin, and so $(x\cdot\nu)\ge 0$ on $\partial\Omega$. Being $a(x)$ nonnegative, the boundary term in \eqref{eq:poho} is nonnegative, and the Poho\v{z}aev type identity \eqref{eq:poho} gives the inequality
\begin{equation}\label{eq:poho-ineq}
\left(\frac{1}{p}-\frac{1}{q}\right)\int_\Omega|\nabla u|^p dx +\frac{1}{Nq}\int_\Omega|\nabla u|^q(\nabla a\cdot x)dx \le \int_\Omega\left(F(x,u)-\frac{1}{q^\ast}f(x,u)u\right)dx.
\end{equation}
We are now ready to prove the nonexistence theorem.
\begin{proof}[Proof of Theorem \ref{thm:nonex}]
Being $a(x)$ radial and radially nondecreasing, $(\nabla a\cdot x)\ge 0$. Thus, by \eqref{eq:poho-ineq}, we have
\begin{equation}\label{eq:ineq-poho-special}
\left(\frac{1}{p}-\frac{1}{q}\right)\int_\Omega|\nabla u|^p dx\le \int_\Omega\left[c(x)\left(\frac{1}{r}-\frac{1}{q^\ast}\right)|u|^r+\mu\left(\frac{1}{p^\ast}-\frac{1}{q^\ast}\right)|u|^{p^\ast}\right]dx.
\end{equation}
In case $(i)$ this immediately gives $\left(\frac{1}{p}-\frac{1}{q}\right)\int_\Omega|\nabla u|^p dx\le 0$, that is $u \equiv 0$.
In case $(ii)$, by inequality \eqref{eq:ineq-poho-special} we can infer that
\[
\left(\frac{1}{p}-\frac{1}{q}\right)\int_\Omega|\nabla u|^p dx \le c_{\infty}\left(\frac{1}{p}-\frac{1}{q^\ast}\right) \int_\Omega |u|^p\,dx.
\]
Hence, using the variational characterization of the first eigenvalue $\lambda_1(p)$ of the $p$-Laplacian, we have
\[
\left[\left(\frac{1}{p}-\frac{1}{q}\right)-\frac{c_{\infty}}{\lambda_1(p)}\left(\frac{1}{p}-\frac{1}{q^\ast}\right)\right]\int_\Omega|\nabla u|^p dx\le 0,
\]
which again implies $u \equiv 0$, by the assumption on $c_{\infty}$.
Finally, for $(iii)$, we go back to the Poho\v{z}aev type identity \eqref{eq:poho} and observe that in this case, using again the variational characterization of $\lambda_1(p)$ and the assumptions on $\mu$ and $c_{\infty}$, it implies
\[
\begin{aligned}
&\left(\frac{1}{p}-\frac{1}{q}\right)\int_\Omega|\nabla u|^p\, dx +\frac{1}{N}\int_{\partial\Omega}\left[\left(1-\frac{1}{p}\right)|\partial_\nu u|^p+\left(1-\frac{1}{q}\right)a(x)|\partial_\nu u|^q\right](x\cdot\nu)\,d\sigma\\
&\hspace{5cm} \le c_\infty \left(\frac{1}{p}-\frac{1}{q^\ast}\right) \int_\Omega |u|^p\,dx \le \left(\frac{1}{p}-\frac{1}{q}\right)\int_\Omega|\nabla u|^p\, dx.
\end{aligned}
\]
Therefore,
\[
\int_{\partial\Omega}\left[\left(1-\frac{1}{p}\right)|\partial_\nu u|^p+\left(1-\frac{1}{q}\right)a(x)|\partial_\nu u|^q\right](x\cdot\nu)\,d\sigma\le 0,
\]
which in view of the strict starshapedness of $\Omega$ (i.e., $(x\cdot\nu)>0$ on $\partial\Omega$) forces
\begin{equation}\label{eq:partialnu=0}
\partial_\nu u=0\quad\mbox{ on }\partial\Omega.
\end{equation}
Since, by the explicit expression for $c_\infty$ given in the hypothesis, it holds
\[
\left(\frac{1}{p}-\frac{1}{q}\right)\int_\Omega|\nabla u|^p dx\le c_\infty\left(\frac{1}{p}-\frac{1}{q^\ast}\right)\int_\Omega|u|^p dx = \left(\frac{1}{p}-\frac{1}{q}\right)\lambda_1(p)\int_\Omega|u|^p dx.
\]
By the variational characterization of $\lambda_1(p)$, if $u$ is nonzero, it must be the first eigenvalue of the $p$-Laplacian, and so it satisfies the equation $-\Delta_p u=\lambda_1(p)u^{p-1}$. Hence, by the Hopf Lemma, $\partial_\nu u\neq 0$ on $\partial\Omega$. Thus, in view of \eqref{eq:partialnu=0}, $u \equiv 0$.
\end{proof}

\begin{proof}[Proof of Theorem \ref{thm:small}] 
By contradiction, suppose there exists a sequence $(u_j) \subset W^{1,\A}_0(\Omega)\cap W^{2,\A}(\Omega)$ of solutions of \eqref{eq:P-poho} such that $\|u_j\|\to 0$ as $j\to \infty$. Let $\gamma>q^\ast$. Multiplying the equation in \eqref{eq:P-poho} by $u/\gamma$, we get for $f$ as in the hypothesis
\[
\frac{1}{\gamma}\int_\Omega(|\nabla u_j|^p+a(x)|\nabla u_j|^q)dx=\frac{1}{\gamma}\int_\Omega(c(x)|u_j|^r+\mu|u_j|^{p^\ast}+b(x)|u_j|^{q^\ast})dx.
\]
Subtracting this last identity from \eqref{eq:poho-RHS-LHS}, and using that $\Omega$ is starshaped and $a$ is radial and radially nondecreasing, this implies
\[
\begin{aligned}
&\left(\frac{1}{p^\ast}-\frac{1}{\gamma}\right)\int_\Omega|\nabla u_j|^pdx+\left(\frac{1}{q^\ast}-\frac{1}{\gamma}\right)\int_\Omega a(x)|\nabla u_j|^qdx\\
&\le \left(\frac{1}{r} -\frac{1}{\gamma}\right)\int_\Omega c(x)|u_j|^r dx+\mu \left(\frac{1}{p^\ast}-\frac{1}{\gamma}\right)\int_\Omega |u_j|^{p^\ast}dx+\left(\frac{1}{q^\ast}-\frac{1}{\gamma}\right)\int_\Omega b(x)|u_j|^{q^\ast}dx\\
&\le \left(\frac{1}{r} -\frac{1}{\gamma}\right)\int_\Omega c(x)|u_j|^r dx+\mu \left(\frac{1}{p^\ast}-\frac{1}{\gamma}\right)\frac{1}{S_p^{p^\ast/p}}(\rho_\A(\nabla u_j))^{p^\ast/p}
+\kappa\left(\frac{1}{q^\ast}-\frac{1}{\gamma}\right)(\rho_\A(\nabla u_j))^{q^\ast/q},
\end{aligned}
\]
where in the last steps we used \eqref{3} and \eqref{4}. Thus, the last chain of inequalities gives
\begin{equation}\label{eq:poho.ineq-small}
\begin{aligned}
\left(\frac{1}{q^\ast}-\frac{1}{\gamma}\right)\rho_\A(\nabla u_j)\le&\left(\frac{1}{r}-\frac{1}{\gamma}\right)\int_\Omega c(x)|u_j|^r dx+\mu \left(\frac{1}{p^\ast}-\frac{1}{\gamma}\right)\frac{1}{S_p^{p^\ast/p}}(\rho_\A(\nabla u_j))^{p^\ast/p}\\
&+\kappa\left(\frac{1}{q^\ast}-\frac{1}{\gamma}\right)(\rho_\A(\nabla u_j))^{q^\ast/q}.
\end{aligned}
\end{equation}  
Now, if $r\in(p,p^\ast]$, by the Sobolev embedding $W^{1,p}_0(\Omega)\hookrightarrow L^r(\Omega)$, we obtain
\[
\begin{aligned}
\left(\frac{1}{q^\ast}-\frac{1}{\gamma}\right)&\rho_\A(\nabla u_j)\le
\left(\frac{1}{r}-\frac{1}{\gamma}\right)c_\infty C_S\left(\int_\Omega |\nabla u_j|^p dx\right)^{r/p}\\
&+\mu \left(\frac{1}{p^\ast}-\frac{1}{\gamma}\right)\frac{1}{S_p^{p^\ast/p}}(\rho_\A(\nabla u_j))^{p^\ast/p}+\kappa\left(\frac{1}{q^\ast}-\frac{1}{\gamma}\right)(\rho_\A(\nabla u_j))^{q^\ast/q}\\
&\hspace{-.5cm}\le \left(\frac{1}{r}-\frac{1}{\gamma}\right)c_\infty C_S(\rho_\A(\nabla u_j))^{r/p}+\mu \left(\frac{1}{p^\ast}-\frac{1}{\gamma}\right)\frac{1}{S_p^{p^\ast/p}}(\rho_\A(\nabla u_j))^{p^\ast/p}\\
&+\kappa\left(\frac{1}{q^\ast}-\frac{1}{\gamma}\right)(\rho_\A(\nabla u_j))^{q^\ast/q}.
\end{aligned}
\]
Since by \eqref{18}, $\rho_\A(\nabla u_j)\to 0$ as well, this gives a contradiction, because all the exponents of $\rho_\A(\nabla u_j)$ on the right-hand side are larger than 1. Thus, $(i)$ is proved.
As for $(ii)$, setting $\C(x,t):=t^\sigma+c(x)t^r$, with $\sigma\in(q,p^\ast)$, by Proposition \ref{prop:subcritical}, we know that $W^{1,\A}(\Omega)\hookrightarrow L^\C(\Omega)$. Thus,
\[
\int_\Omega c(x)|u_j|^rdx\le \rho_\C(u_j)\le \max\{\|u_j\|_\C^{\sigma},\|u_j\|_\C^{r}\}\le C_S\max\{\|u_j\|^{\sigma},\|u_j\|^{r}\}=C_S \|u_j\|^{\sigma},
\]
where used that $\|u_j\|<1$ for $j$ large. Therefore, combining with \eqref{eq:poho.ineq-small} and \eqref{18}, we have
\[
\begin{aligned}
\left(\frac{1}{q^\ast}-\frac{1}{\gamma}\right)\|u_j\|^q\le & \left(\frac{1}{q^\ast}-\frac{1}{\gamma}\right)\rho_\A(\nabla u_j)
\le \left(\frac{1}{r}-\frac{1}{\gamma}\right)C_S \|u_j\|^{\sigma}\\
& + \mu \left(\frac{1}{p^\ast}-\frac{1}{\gamma}\right)\frac{1}{S_p^{p^\ast/p}}\|u_j\|^{p^\ast}+\kappa\left(\frac{1}{q^\ast}-\frac{1}{\gamma}\right)\|u_j\|^{pq^\ast/q},
\end{aligned}
\]
which contradicts $\|u_j\|\to 0$, because all the exponents of $\|u_j\|$ on the right-hand side are larger than $q$, in particular $pq^\ast/q>q$ is a consequence of $q/p \le N/(N-1)$. This concludes the proof of $(ii)$.
Finally, to prove $(iii)$, we first estimate
\begin{multline*}
\int_\Omega c(x)\, |u|^{r} dx \le c_\infty \int_{\supp(c)} |u|^{r} dx \le c_\infty C_S\left(\int_{\supp(c)} |\nabla u|^q\, dx\right)^{r/q}\\[7.5pt]
\le \frac{c_\infty C_S}{(a'_0)^{r/q}} \left(\int_{\supp(c)} a(x)\, |\nabla u|^q\, dx\right)^{r/q} \le
\frac{c_\infty C_S}{(a'_0)^{r/q}} (\rho_A(\nabla u))^{r/q},
\end{multline*}
in view of the embedding $W^{1,q}(\mathrm{supp}(c))\hookrightarrow L^r(\mathrm{supp}(c))$. The conclusion follows as for $(i)$.
\end{proof}

\section{Proofs of compactness results in the nonlocal case}\label{sec7}
In this section $J$ is the energy functional introduced in \eqref{eq:energyJ} for problem \eqref{10KCpt}.
\subsection{Some preliminary results}
We explicitly derive here some consequences on $h$ of the assumptions \ref{K1} and \ref{K2} made on $K$. Note that \ref{K1} implies that $K_\ell$ is nonnegative and hence nondecreasing in both $t_1$ and $t_2$ by \ref{K2}. We also note that 
\begin{equation} \label{9K}
K_\ell\big(\pnorm{\nabla u}^p,\pnorm[q,a]{\nabla u}^q\big) \ge \alpha\, \rho_\A(\nabla u)^\gamma \quad \forall \, u \in W^{1,\,\A}_0(\Omega)
\end{equation}
by \ref{K1}.
Furthermore, by the monotonicity of $h$, $H(t)\le th(t)$ for all $t\ge0$, thus \ref{K1}, together with the properties of $h$, implies that for every $t_1,\,t_2\ge 0$, 
\[
(t_1+t_2)h(t_1+t_2)\ge\left(\frac{t_1}{p}+\frac{t_2}{q}\right)h\left(\frac{t_1}{p}+\frac{t_2}{q}\right)\ge H\left(\frac{t_1}{p}+\frac{t_2}{q}\right)\ge\alpha(t_1+t_2)^\gamma, 
\]
that is 
\begin{equation}\label{19K}
h(t) \ge \alpha t^{\gamma - 1}\quad \forall \, t \ge 0.
\end{equation}
We observe that in turn this implies that $h$ can vanish only at zero, namely
\begin{equation}\label{h0}
h(t)>0 \quad\forall\,t>0.
\end{equation}
Moreover, by \eqref{19K} we also infer that $h$ is not constant, unless $\gamma = 1$. 
\\
Finally \ref{K1} gives also for every $t_1,\,t_2\ge 0$, 
\[
H\left(\frac{t_1}{p}+\frac{t_2}{q}\right) \ge \frac{1}{\ell^*}(t_1+t_2)h\left(\frac{t_1}{p}+\frac{t_2}{q}\right),
\]
so, if $t_1=0$, we get for every $t_2\ge 0$, $H(\frac{t_2}{q}) \ge \frac{q}{\ell^*}\,\frac{t_2}{q}\,h(\frac{t_2}{q})$, 
which means that 
\[
H(t)\ge \frac{q}{\ell^*}th(t)=\frac{q}{\ell^*}tH'(t)\quad \mbox{for all }t\ge 0.
\] 
Consequently, $H(t)\le H(1) t^{\ell^*/q}$ for all $t\ge 1$. This, together with \eqref{19K} and the monotonicity of $h$, implies
\begin{equation}\label{eq:growth}
\frac{\alpha}{\gamma}t^\gamma\le H(t)\le th(t)\le \frac{\ell^*}{q} H(t)\le \frac{\ell^*}{q} H(1)t^{\ell^*/q}\quad\forall\,t\ge 1.
\end{equation}
\\
We stress that, apart from the monotonicity in both $t_1$ and $t_2$, all the other properties listed above are consequences of condition \ref{K1} alone.

\begin{lemma} \label{Lemma 0K}
Let $h$ satisfy \eqref{h0}. 
If $\seq{u_j} \subset W^{1,\,\A}_0(\Omega)$ converges weakly to $u$ and $J'(u_j) \to 0$, then, up to a subsequence, $\nabla u_j\to\nabla u$ a.e. in $\Omega$.
\end{lemma}

\begin{proof} Arguing as in the proof of Lemma \ref{Lemma 0} up to the limit in \eqref{eq:conseq-weak}, we get
\begin{equation}\label{eq:limsupK}
\begin{aligned}
\limsup_{j\to\infty}&(I_{p,j}+I_{q,j})h(\mathcal E_\A(u_j))\\
&=\limsup_{j\to\infty}\Big\{J'(u_j)[T(u_j-u)]\\
&\hspace{2cm}+\int_\Omega\left(\mu|u_j|^{p^\ast-2}u_j+b(x)|u_j|^{q^\ast-2}u_j+f(x,u_j)\right)T(u_j-u)\,dx\Big\}\\
&=\limsup_{j\to\infty}\int_\Omega\left(\mu|u_j|^{p^\ast-2}u_j+b(x)|u_j|^{q^\ast-2}u_j+g(x,u_j)\right)T(u_j-u)\,dx.
\end{aligned}
\end{equation} 
Now, reasoning as in the last part of the proof of Lemma \ref{Lemma 0}, we get
\[
I_j:=\int_\Omega\left(\mu|u_j|^{p^\ast-2}u_j+b(x)|u_j|^{q^\ast-2}u_j+g(x,u_j)\right)T(u_j-u)\,dx\to 0 \quad\mbox{as }j\to\infty,
\]
which by \eqref{eq:limsupK}, \eqref{eq:Ippos}, and \eqref{eq:Iqpos} implies that $(I_{p,j}+I_{q,j})h(\mathcal E_\A(u_j))\to 0$ as $j\to\infty$.

We observe that since the sequence $(u_j)$ is bounded in $W^{1,\A}_0(\Omega)$, also $(\E_\A(u_j))\subset \mathbb R^+$ is bounded by \eqref{18}. Thus $(\E_\A(u_j))$ converges to a nonnegative number up to a subsequence.
Now, if $h(\mathcal E_\A(u_j))\to 0$ (case arising only for degenerate Kirchhoff functions), by the continuity of $h$ and the hypothesis \eqref{h0}, we get that $\mathcal E_\A(u_j)\to 0$, which in particular implies that $|\nabla u_j|_p\to 0$. In this case, up to a subsequence, $\nabla u_j\to 0$ a.e. in $\Omega$ and the proof is concluded. If $I_{p,j}+I_{q,j}\to 0$, in particular $I_{p,j}\to 0$, from which we infer that \eqref{eq:thesis} holds by \eqref{eq:conseq-weak}. 
\end{proof}

\subsection{Proofs of Theorem \ref{Theorem 1K}, Proposition \ref{Proposition 2}, and Theorem \ref{Theorem 2K}}
\begin{proof}[Proof of Theorem \ref{Theorem 1K}]
\ref{Theorem 1.i} We have
\begin{equation} \label{13K}
J(u_j) = H(\EA{u_j}) - \int_\Omega \left(\frac{\mu}{p^\ast}\, |u_j|^{p^\ast} + \frac{b(x)}{q^\ast}\, |u_j|^{q^\ast} + F(x,u_j)\right) dx = \beta + \o(1)
\end{equation}
and
\begin{equation} \label{14K}
J'(u_j)\, u_j = \rho_\A(\nabla u_j)\, h(\EA{u_j}) - \int_\Omega \left(\mu\, |u_j|^{p^\ast} + b(x)\, |u_j|^{q^\ast} + u_j\, f(x,u_j)\right) dx = \o(\norm{u_j}).
\end{equation}
$\bullet$ \textit{Case $\ell=p$}: 
Dividing \eqref{14K} by $p^\ast$, subtracting from \eqref{13K}, and combining with \eqref{8K} and \ref{11K} gives
\begin{equation} \label{15K}
K_p\big(\pnorm{\nabla u_j}^p,\pnorm[q,a]{\nabla u_j}^q\big) \le \vartheta \int_\Omega |u_j|^{\gamma p}\, dx + \beta + \o(1 + \norm{u_j}).
\end{equation}
Since
\[
K_p\big(\pnorm{\nabla u_j}^p,\pnorm[q,a]{\nabla u_j}^q\big) \ge \alpha\, \rho_\A(\nabla u_j)^\gamma
\]
by \eqref{9K} and
\begin{equation}
\label{eq:conseq-eigen-p}
\lambda_1(\gamma,p) \int_\Omega |u_j|^{\gamma p}\, dx \le \left(\int_\Omega |\nabla u_j|^p\, dx\right)^\gamma \le \rho_\A(\nabla u_j)^\gamma
\end{equation}
by \eqref{10K}, this gives
\[
\left(\alpha - \frac{\vartheta}{\lambda_1(\gamma,p)}\right) \rho_\A(\nabla u_j)^\gamma \le \beta + \o(1 + \norm{u_j}).
\]
Since $\vartheta < \alpha\, \lambda_1(\gamma,p)$, this together with the first inequality in \eqref{18} implies that $(\norm{u_j})$ is bounded.

$\bullet$ \textit{Case $\ell=q$}: 
Dividing \eqref{14K} by $q^\ast$, subtracting from \eqref{13K}, and combining with \eqref{8K} and \ref{11q} gives
\begin{equation} \label{15q}
\begin{aligned}
K_q\big(\pnorm{\nabla u_j}^p,\pnorm[q,a]{\nabla u_j}^q\big) \le  \mu\left(\frac{1}{p^*}-\frac{1}{q^*}\right)&\int_\Omega |u_j|^{p^*}\, dx + \vartheta_1 \int_\Omega |u_j|^{r}\, dx\\ 
&+ \vartheta_2 \int_\Omega c(x)|u_j|^{\gamma q}\, dx + \beta + \o(1 + \norm{u_j}).
\end{aligned}
\end{equation}
Now, by \eqref{9K} we get
\[
K_q\big(\pnorm{\nabla u_j}^p,\pnorm[q,a]{\nabla u_j}^q\big) \ge \alpha\, \rho_\A(\nabla u_j)^\gamma \ge \alpha (|\nabla u_j|_p^{\gamma p}+|\nabla u_j|_{q,a}^{\gamma q}),
\]
by \eqref{10q}
\[
\lambda_1(\gamma,q,a) \int_\Omega c(x)|u_j|^{\gamma q}\, dx \le |\nabla u_j|^{\gamma q}_{q,a}.
\]
Moreover, by H\"older's and Sobolev's inequalities, $|u_j|_r\le |\Omega|^{1/r-1/p^*} |u_j|_{p^*}\le |\Omega|^{1/r-1/p^*}|\nabla u_j|_p/S_p^{1/p}$. Altogether, from \eqref{15q} we obtain
\begin{equation}
\label{eq:tohaveabsurd}
\begin{aligned}
\alpha (|\nabla u_j|_p^{\gamma p}+|\nabla u_j|_{q,a}^{\gamma q}) \le \frac{\mu}{S_p^{p^*/p}}\left(\frac{1}{p^*}-\frac{1}{q^*}\right)&|\nabla u_j|_p^{p^*} + 
\frac{\vartheta_1}{S_p^{r/p}}|\Omega|^{1-r/p^*}|\nabla u_j|^r_p \\
&+ \frac{\vartheta_2}{\lambda_1(\gamma,q,a)}|\nabla u_j|^{\gamma q}_{q,a}+ \beta + \o(1 + \norm{u_j}).
\end{aligned}
\end{equation}
Suppose by contradiction that $(\norm{u_j})$ is unbounded, then for a renamed subsequence $\|u_j\|\to\infty$, and so $|\nabla u_j|_p\to\infty$ or $|\nabla u_j|_{q,a}\to\infty$ by the first inequality of \eqref{18}. 
Therefore, eventually for $j$ we have
\[
\begin{aligned}
\alpha (|\nabla u_j|_p^{\gamma p}+|\nabla u_j|_{q,a}^{\gamma q}) \le &\left[\frac{\mu}{S_p^{p^*/p}}\left(\frac{1}{p^*}-\frac{1}{q^*}\right)+ \frac{\vartheta_1}{S_p^{r/p}}|\Omega|^{1-r/p^*}\right] \max\{|\nabla u_j|_p^{p^*}, |\nabla u_j|^r_p\}\\
&\hspace{4cm}+ \frac{\vartheta_2}{\lambda_1(\gamma,q,a)}|\nabla u_j|^{\gamma q}_{q,a}+ \beta + \o(1 + \norm{u_j}).
\end{aligned}
\]
Since $\gamma p > p^* > r$ and $\vartheta_2 < \alpha\, \lambda_1(\gamma,q,a)$, this gives a contradiction and allows to conclude that $(\norm{u_j})$ is bounded.

\ref{Theorem 1.ii} By the Brezis-Lieb lemma (see \cite{MR699419}),
\[
\rho_\A(\nabla u_j) = \rho_\A(\nabla (u_j - u)) + \rho_\A(\nabla u) + \o(1), \qquad \EA{u_j} = \EA{u_j - u} + \EA{u} + \o(1).
\]
Since
\begin{multline*}
\rho_\A(\nabla (u_j - u)) = \pnorm{\nabla (u_j - u)}^p + \pnorm[q,a]{\nabla (u_j - u)}^q \to t_1 + t_2,\\[7.5pt]
\EA{u_j - u} = \frac{1}{p}\, \pnorm{\nabla (u_j - u)}^p + \frac{1}{q}\, \pnorm[q,a]{\nabla (u_j - u)}^q \to \frac{t_1}{p} + \frac{t_2}{q},
\end{multline*}
this together with the continuity of $h$ gives
\[
\rho_\A(\nabla u_j) \to t_1 + t_2 + \rho_\A(\nabla u), \quad h(\EA{u_j}) \to h\bigg(\frac{t_1}{p} + \frac{t_2}{q} + \EA{u}\bigg).
\]
So passing to the limit in \eqref{14K} using \eqref{11} and the compact embedding $W^{1,\,\A}_0(\Omega) \hookrightarrow \hookrightarrow L^\C(\Omega)$ in Proposition \ref{prop:subcritical} gives
\begin{equation} \label{16K}
(t_1 + t_2 + \rho_\A(\nabla u))\, h\bigg(\frac{t_1}{p} + \frac{t_2}{q} + \EA{u}\bigg) = \int_\Omega \left(\mu\, |u_j|^{p^\ast} + b(x)\, |u_j|^{q^\ast} + u\, f(x,u)\right) dx + \o(1).
\end{equation}
On the other hand, passing to the limit in
\begin{multline*}
J'(u_j)\, u = \left(\int_\Omega |\nabla u_j|^{p-2}\, \nabla u_j \cdot \nabla u\, dx + \int_\Omega a(x)\, |\nabla u_j|^{q-2}\, \nabla u_j \cdot \nabla u\, dx\right) h(\EA{u_j})\\[7.5pt]
- \int_\Omega \left(\mu\, |u_j|^{p^\ast - 2}\, u_j + b(x)\, |u_j|^{q^\ast - 2}\, u_j + f(x,u_j)\right) u\, dx = \o(1)
\end{multline*}
gives 
\begin{equation} \label{17K}
\rho_\A(\nabla u)\, h\bigg(\frac{t_1}{p} + \frac{t_2}{q} + \EA{u}\bigg) = \int_\Omega \left(\mu\, |u|^{p^\ast} + b(x)\, |u|^{q^\ast} + u\, f(x,u)\right) dx.
\end{equation}
Indeed, by (i), $(\nabla u_j)$ is bounded in $[L^p(\Omega)]^N$, and so $(|\nabla u_j|^{p-2}\nabla u_j)$ is bounded in the reflexive Banach space $[L^{p'}(\Omega)]^N$. Thus, up to a subsequence $|\nabla u_j|^{p-2}\nabla u_j\rightharpoonup V$ in $[L^{p'}(\Omega)]^N$. On the other hand, by Lemma \ref{Lemma 0K}, $\nabla u_j\to  \nabla u$ a.e. in $\Omega$, hence by uniqueness of the a.e. limit, $|\nabla u_j|^{p-2}\nabla u_j\rightharpoonup |\nabla u|^{p-2}\nabla u$ in $[L^{p'}(\Omega)]^N$. This proves the convergence 
\[
\int_\Omega |\nabla u_j|^{p-2}\, \nabla u_j \cdot \nabla u\, dx\to \int_\Omega |\nabla u|^{p}\, dx.
\]
Arguing similarly for the other terms, we obtain \eqref{17K}.
Subtracting \eqref{17K} from \eqref{16K} and again using the Brezis-Lieb lemma gives
\[
(t_1 + t_2)\, h\bigg(\frac{t_1}{p} + \frac{t_2}{q} + \EA{u}\bigg) = \int_\Omega \left(\mu\, |u_j - u|^{p^\ast} + b(x)\, |u_j - u|^{q^\ast}\right) dx + \o(1).
\]
Combining this with the assumption that $h$ is nondecreasing and the inequalities \eqref{3} and \eqref{1000} gives for every $\varepsilon>0$
\begin{multline*}
(t_1 + t_2)\, h\bigg(\frac{t_1}{p} + \frac{t_2}{q}\bigg) \le \frac{\mu}{S_p^{p^\ast/p}} \left(\int_\Omega |\nabla (u_j - u)|^p\, dx\right)^{p^\ast/p}\\[7.5pt]
+ \frac{b_\infty}{(a_0\, S_q)^{q^\ast/q}} \left((1+\varepsilon)\int_\Omega a(x)\, |\nabla (u_j - u)|^q\, dx + C_\varepsilon\int_\Omega |u_j - u|^q\, dx\right)^{q^\ast/q} + \o(1).
\end{multline*}
By the compact embedding of $W^{1,\A}_0(\Omega)$ into $L^q(\Omega)$ (cf. Proposition \ref{prop:embeddings1}-($ii$)), passing to the limit as $j\to\infty$ gives
\[
(t_1 + t_2)\, h\bigg(\frac{t_1}{p} + \frac{t_2}{q}\bigg) \le \mu \left(\frac{t_1}{S_p}\right)^{p^\ast/p} + b_\infty \left((1+\varepsilon)\frac{t_2}{a_0\, S_q}\right)^{q^\ast/q}\quad\mbox{for every } \varepsilon>0
\]
so, letting $\varepsilon\to 0$, we conclude that $(t_1,t_2) \in S_{\mu,b_\infty}$.

\ref{Theorem 1.iii} Let $\beta < \beta_{\mu,b_\infty,\ell}^\ast$. We will show that $(t_1,t_2) = (0,0)$. Suppose $(t_1,t_2) \ne (0,0)$. Then $(t_1,t_2) \in S_{\mu,b_\infty} \setminus \set{(0,0)}$ and hence
\begin{equation} \label{18K}
K_\ell(t_1,t_2) \ge \beta_{\mu,b_\infty,\ell}^\ast
\end{equation}
by \eqref{12K}.

$\bullet$ \textit{Case $\ell=p$}: Since
\[
\pnorm{\nabla u_j}^p \to t_1 + \pnorm{\nabla u}^p, \qquad \pnorm[q,a]{\nabla u_j}^q \to t_2 + \pnorm[q,a]{\nabla u}^q
\]
by the Brezis-Lieb lemma and $\gamma p < p^\ast$, passing to the limit in \eqref{15K} gives
\[
K_p\big(t_1 + \pnorm{\nabla u}^p,t_2 + \pnorm[q,a]{\nabla u}^q\big) \le \vartheta \int_\Omega |u|^{\gamma p}\, dx + \beta.
\]
Since $\lambda_1(\gamma,p)\int_\Omega |u|^{\gamma p} \le \left(\int_\Omega |\nabla u|^p\right)^{\gamma} \le \rho_\A(\nabla u)^\gamma$,
\[
K_p\big(t_1 + \pnorm{\nabla u}^p,t_2 + \pnorm[q,a]{\nabla u}^q\big) \ge K_p(t_1,t_2) + K_p\big(\pnorm{\nabla u}^p,\pnorm[q,a]{\nabla u}^q\big) \ge \beta_{\mu,b_\infty,p}^\ast + \alpha\, \rho_\A(\nabla u)^\gamma
\]
by \ref{K2}, \eqref{18K}, \eqref{9K}, and $\vartheta < \alpha\lambda_1(\gamma,p)$, 
this gives $\beta \ge \beta_{\mu,b_\infty,p}^\ast$, contrary to assumption. So $(t_1,t_2) = (0,0)$.

$\bullet$ \textit{Case $\ell=q$}: Similarly to the previous case, when $\mu = \vartheta_2 = 0$, passing to the limit in \eqref{15q} gives
\[
K_q\big(t_1 + \pnorm{\nabla u}^p,t_2 + \pnorm[q,a]{\nabla u}^q\big) \le \vartheta_2 \int_\Omega c(x)|u|^{\gamma q}\, dx + \beta.
\]
Since
\[
K_q\big(t_1 + \pnorm{\nabla u}^p,t_2 + \pnorm[q,a]{\nabla u}^q\big) \ge K_q(t_1,t_2) + K_q\big(\pnorm{\nabla u}^p,\pnorm[q,a]{\nabla u}^q\big) \ge \beta_{0,b_\infty,q}^\ast + \alpha\, \rho_\A(\nabla u)^\gamma
\]
by \ref{K2}, \eqref{18K}, and \eqref{9K}, and
\[
\vartheta_2 \int_\Omega c(x)|u|^{\gamma q}\, dx \le \frac{\vartheta_2}{\lambda_1(\gamma,q,a)} \left(\int_\Omega a(x)|\nabla u|^q\, dx\right)^\gamma \le \alpha\, \rho_\A(\nabla u)^\gamma
\]
by \eqref{10q}, this gives $\beta \ge \beta_{0,b_\infty,q}^\ast$, contrary to assumption. So $(t_1,t_2) = (0,0)$.
\end{proof}
\smallskip

For all $(t_1,t_2) \in S_{\mu,b_\infty} \setminus \set{(0,0)}$,
\[
\mu \left(\frac{t_1}{S_p}\right)^{p^\ast/p} + b_\infty \left(\frac{t_2}{a_0\, S_q}\right)^{q^\ast/q} \ge \alpha\, (t_1 + t_2) \bigg(\frac{t_1}{p} + \frac{t_2}{q}\bigg)^{\gamma - 1} \ge \frac{\alpha}{q^{\gamma - 1}}\, (t_1 + t_2)^\gamma
\]
by \eqref{19K} and hence
\begin{equation} \label{20K}
\frac{\mu}{S_p^{p^\ast/p}}\, t_1^{p^\ast/p - \gamma} + \frac{b_\infty}{(a_0\, S_q)^{q^\ast/q}}\, t_2^{q^\ast/q - \gamma} \ge \frac{\alpha}{q^{\gamma - 1}}.
\end{equation}
We are now ready to prove Proposition \ref{Proposition 2} and Theorem \ref{Theorem 2K}.

\begin{proof}[Proof of Proposition \ref{Proposition 2}]
Since $S_{\mu,b_\infty} \ne \set{(0,0)}$, $\beta_{\mu,b_\infty,\ell}^\ast < + \infty$. Let $\seq{(t_{1j},t_{2j})} \subset S_{\mu,b_\infty} \setminus \set{(0,0)}$ be a minimizing sequence for $\beta_{\mu,b_\infty,\ell}^\ast$. Then $\seq{(t_{1j},t_{2j})}$ is bounded by \ref{K1} and hence converges to some $(t_1,t_2) \in S_{\mu,b_\infty}$ for a renamed subsequence. We have $\beta_{\mu,b_\infty,\ell}^\ast = K_\ell(t_1,t_2)$ by the continuity of $K_\ell$ and $(t_1,t_2) \ne (0,0)$ by \eqref{20K}.
\end{proof}

\begin{proof}[Proof of Theorem \ref{Theorem 2K}]
\ref{Theorem 2.i} We have $(t_1,0) \in S_{\mu,b_\infty}$ for all
\[
t_1 \ge \tilde{h}_p^{-1}\bigg(\frac{\mu}{S_p^{p^\ast/p}}\bigg)
\]
and hence $S_{\mu,b_\infty} \ne \set{(0,0)}$ by \eqref{21K}. So there exists $(t_1,t_2) \in S_{\mu,b_\infty} \setminus \set{(0,0)}$ such that
\begin{equation} \label{25K}
\beta_{\mu,b_\infty,p}^\ast = K_p(t_1,t_2) > 0
\end{equation}
by Proposition \ref{Proposition 2}. Since $h$ is nondecreasing and $(t_1,t_2) \in S_{\mu,b_\infty}$,
\[
t_1\, h\bigg(\frac{t_1}{p}\bigg) \le (t_1 + t_2)\, h\bigg(\frac{t_1}{p} + \frac{t_2}{q}\bigg) \le \mu \left(\frac{t_1}{S_p}\right)^{p^\ast/p} + b_\infty \left(\frac{t_2}{a_0\, S_q}\right)^{q^\ast/q}
\]
and hence
\begin{equation} \label{26K}
\tilde{h}_p(t_1) \le \frac{\mu}{S_p^{p^\ast/p}} + \frac{b_\infty}{(a_0\, S_q)^{q^\ast/q}}\, \frac{t_2^{q^\ast/q}}{t_1^{p^\ast/p}}.
\end{equation}
Since $S_{\mu,0} \subset S_{\mu,b_\infty}$, $\beta_{\mu,b_\infty,p}^\ast \le \beta^\ast_{\mu,0,p}$, so \eqref{25K} together with \ref{K1} implies that $t_2$ is bounded uniformly in $b_\infty$. Then \eqref{20K} implies that $t_1$ is bounded away from zero for all sufficiently small $b_\infty$. So \eqref{26K} gives
\[
\tilde{h}_p(t_1) \le \frac{\mu}{S_p^{p^\ast/p}} + C b_\infty
\]
for some constant $C > 0$ independent of $b_\infty$. Since $\tilde{h}_p$ is decreasing, this gives
\[
t_1 \ge \tilde{h}_p^{-1}\bigg(\frac{\mu}{S_p^{p^\ast/p}} + C b_\infty\bigg).
\]
Since $K_p$ is nondecreasing, this in turn gives
\[
K_p(t_1,t_2) \ge K_p\bigg(\tilde{h}_p^{-1}\bigg(\frac{\mu}{S_p^{p^\ast/p}} + C b_\infty\bigg),0\bigg) \to K_p\bigg(\tilde{h}_p^{-1}\bigg(\frac{\mu}{S_p^{p^\ast/p}}\bigg),0\bigg) \quad \text{as } b_\infty \to 0
\]
by the continuity of $\tilde{h}_p^{-1}$ and $K_p$. The desired conclusion follows from this and \eqref{25K}. 

\ref{Theorem 2.ii} We have $(0,t_2) \in S_{0,b_\infty}$ for all
\[
t_2\ge \tilde{h}_q^{-1}\bigg(\frac{b_\infty}{(a_0S_q)^{q^\ast/q}}\bigg)
\]
and hence $S_{0,b_\infty} \ne \set{(0,0)}$ by \eqref{21q}. So there exists $(t_1,t_2) \in S_{0,b_\infty} \setminus \set{(0,0)}$ such that
\begin{equation} \label{25q}
\beta_{0,b_\infty,q}^\ast = K_q(t_1,t_2) > 0
\end{equation}
by Proposition \ref{Proposition 2}. Now, if $t_1 = 0$, $t_2 \neq 0$. If $t_1 \neq 0$, then $t_1 h(t_1/p+t_2/q) > 0$ by \eqref{h0}. Thus, since $h$ is nondecreasing and $(t_1,t_2) \in S_{0,b_\infty}$,
\[
t_2\, h\bigg(\frac{t_2}{q}\bigg) < (t_1 + t_2)\, h\bigg(\frac{t_1}{p} + \frac{t_2}{q}\bigg) \le  b_\infty \left(\frac{t_2}{a_0\, S_q}\right)^{q^\ast/q},
\]
which implies that $t_2 \neq 0$ also in this case. Now, in both cases the following inequality holds 
\begin{equation}
\label{eq:ineqq}
t_2\, h\bigg(\frac{t_2}{q}\bigg) \le  b_\infty \left(\frac{t_2}{a_0\, S_q}\right)^{q^\ast/q}
\end{equation}
and $t_2 \neq 0$. Hence, dividing \eqref{eq:ineqq} by $t_2^{q^*/q}$, we obtain
\[
\tilde{h}_q(t_2) \le \frac{b_\infty}{(a_0\, S_q)^{q^\ast/q}}.
\]
Since $\tilde{h}_q$ is decreasing, this gives $t_2 \ge \tilde{h}_q^{-1}(b_\infty/(a_0S_q)^{q^\ast/q})$.
Since $K_q$ is nondecreasing, this in turn gives
\[
K_q(t_1,t_2) \ge K_q\bigg(0,\tilde{h}_q^{-1}\bigg(\frac{b_\infty}{(a_0S_q)^{q^\ast/q}}\bigg)\bigg).
\]
The desired conclusion follows from this and \eqref{25q}.
\end{proof}

\section{Proofs of existence results in the nonlocal case}\label{sec8}

In this section we prove Theorems \ref{Theorem 3} and \ref{Theorem 4} concerning the existence for solutions to \eqref{27K} and \eqref{27q}, respectively.

\subsection{Proof of Theorem \ref{Theorem 3}}

Throughout this subsection, $\ell=p$. 
The energy functional associated with problem \eqref{27K} is
\[
J(u) = H(\EA{u}) - \int_\Omega \left(\frac{\lambda}{\gamma p}\, |u|^{\gamma p} + \frac{\mu}{p^\ast}\, |u|^{p^\ast} + \frac{c(x)}{s}\, |u|^s + \frac{b(x)}{q^\ast}\, |u|^{q^\ast}\right) dx, \quad u \in W^{1,\,\A}_0(\Omega).
\]

\begin{lemma} \label{Lemma 1K}
Let \ref{K1} hold for $K_p$. If $\lambda < \alpha \gamma p\, \lambda_1(\gamma,p)$, there exists $\varrho > 0$ such that
\[
\inf_{\norm{u} = \varrho}\, J(u) > 0.
\]
\end{lemma}

\begin{proof}
By \eqref{8K} and \eqref{9K},
\[
H(\EA{u}) = K_p\big(\pnorm{\nabla u}^p,\pnorm[q,a]{\nabla u}^q\big) + \frac{1}{p^\ast}\, \rho_\A(\nabla u)\, h(\EA{u}) \ge \alpha\, \rho_\A(\nabla u)^\gamma,
\]
while
\[
\int_\Omega |u|^{\gamma p}\, dx \le \frac{1}{\lambda_1(\gamma,p)} \left(\int_\Omega |\nabla u|^p\, dx\right)^\gamma \le \frac{\rho_\A(\nabla u)^\gamma}{\lambda_1(\gamma,p)}
\]
by \eqref{10K}. Fix $r \in (\gamma q,p^\ast)$, let
\[
\C(x,t) = t^r + c(x)\, t^s, \quad (x,t) \in \Omega \times [0,\infty).
\]
By Proposition \ref{prop:subcritical},
\[
\norm[\C]{u} \le k \norm{u} \quad \forall\, u \in W^{1,\,\A}_0(\Omega)
\]
for some constant $k \ge 1$. Then for all $u \in W^{1,\,\A}_0(\Omega)$ with $\norm{u} \le 1/k$, we have $\norm[\C]{u} \le 1$ and $\norm{u} \le 1$, so
\[
\int_\Omega c(x)\, |u|^s\, dx \le \rho_\C(u) \le \norm[\C]{u}^r \le k^r \norm{u}^r \le k^r \rho_\A(\nabla u)^{r/q}
\]
by \eqref{18}. By \eqref{3} and \eqref{4},
\begin{gather*}
\int_\Omega |u|^{p^\ast} dx \le \frac{\rho_\A(\nabla u)^{p^\ast/p}}{S_p^{p^\ast/p}}, \qquad \int_\Omega b(x)\, |u|^{q^\ast} dx \le \kappa\, \rho_\A(\nabla u)^{q^\ast/q},
\end{gather*}
respectively. Thus, for all $u \in W^{1,\,\A}_0(\Omega)$ with $\norm{u} \le 1/k$,
\[
J(u) \ge \left(\alpha - \frac{\lambda}{\gamma p\, \lambda_1(\gamma,p)}\right) \rho_\A(\nabla u)^\gamma - \frac{k^r \rho_\A(\nabla u)^{r/q}}{s} - \frac{\mu\, \rho_\A(\nabla u)^{p^\ast/p}}{p^\ast S_p^{p^\ast/p}} - \frac{\kappa}{q^\ast} \rho_\A(\nabla u)^{q^\ast/q}.
\]
Since $\lambda < \alpha \gamma p\, \lambda_1(\gamma,p)$, $\gamma < r/q < p^\ast/p < q^\ast/q$, and $\norm{u}^q \le \rho_\A(\nabla u) \le \norm{u}^p$ by \eqref{18}, the desired conclusion follows.
\end{proof}

We take $x_0 = 0$ for the sake of simplicity. Let $\psi \in C^\infty_0(B_\rho(0))$ be a cut-off function such that $0 \le \psi \le 1$ and $\psi = 1$ on $B_{\rho/2}(0)$, and set
\[
u_\eps(x) := \frac{\psi(x)}{\left(\eps^{p/(p-1)} + |x|^{p/(p-1)}\right)^{(N-p)/p}}, \quad v_\eps(x) := \frac{u_\eps(x)}{\pnorm[p^\ast]{u_\eps}}, \quad \eps > 0.
\]
Then, by \eqref{35}, 
for some constant $\kappa > 0$,
\begin{equation} \label{35q}
\int_\Omega v_\eps^{\gamma p}\, dx \ge \begin{cases}
\kappa\, \eps^{N-\gamma(N-p)} & \text{if } \gamma > \frac{N(p - 1)}{(N - p)p}\\[7.5pt]
\kappa\, \eps^{N/p}\, |\log \eps| & \text{if } \gamma = \frac{N(p - 1)}{(N - p)p}\\[7.5pt]
\kappa\, \eps^{\gamma(N-p)/(p-1)} & \text{if } \gamma < \frac{N(p - 1)}{(N - p)p}
\end{cases}
\end{equation}
for all sufficiently small $\eps > 0$. Since $\supp(v_\eps) \subset B_\rho(0)$ and the assumption that $a = 0$ a.e.\! in $B_\rho(0)$ together with \eqref{2} and \eqref{eq:hp-on-c} imply that $b = c = 0$ a.e.\! in $B_\rho(0)$,
\begin{equation} \label{36K}
J(\tau v_\eps) = H\bigg(\frac{\tau^p}{p} \int_\Omega |\nabla v_\eps|^p\, dx\bigg) - \frac{\lambda \tau^{\gamma p}}{\gamma p} \int_\Omega v_\eps^{\gamma p}\, dx - \frac{\mu \tau^{p^\ast}}{p^\ast}, \quad \tau \ge 0.
\end{equation}

\begin{lemma} \label{Lemma 2}
Let \ref{K1} hold for $K_p$. For all $\eps > 0$
\[
J(\tau v_\eps) \to - \infty \quad \text{as } \tau \to + \infty.
\]
\end{lemma}

\begin{proof}
We have
\begin{equation} 
J(\tau v_\eps) \le H\bigg(\frac{\tau^p}{p} \int_\Omega |\nabla v_\eps|^p\, dx\bigg) - \frac{\mu \tau^{p^\ast}}{p^\ast}, \quad \tau \ge 0
\end{equation}
by \eqref{36K}. 
Thus, by the last inequality of \eqref{eq:growth} with $\ell=p$,
\[
J(\tau v_\eps) \le \frac{p^*}{q}	H(1)\frac{|\nabla v_\eps|_p^{pp^*/q}}{p^{p^*/q}}\tau^{pp^*/q} - \frac{\mu \tau^{p^\ast}}{p^\ast}\to -\infty \quad\mbox{as } \tau \to\infty,
\]
the desired conclusion follows.
\end{proof}

By Lemma \ref{Lemma 2} and \eqref{34}, there exists $\tau_0 > 0$ such that $J(\tau_0 v_\eps) < 0$ and $\norm{\tau_0 v_\eps} > \varrho$ for all $\eps > 0$. This together with Lemma \ref{Lemma 1K} shows that $J$ has the mountain pass geometry. Denote by
\[
\Gamma := \big\{\varphi \in C([0,\tau_0],W^{1,\,\A}_0(\Omega)) : \varphi(0) = 0,\, \varphi(\tau_0) = \tau_0 v_\eps\big\}
\]
the class of paths in $W^{1,\,\A}_0(\Omega)$ joining the origin to $\tau_0 v_\eps$ and let
\[
\beta := \inf_{\varphi \in \Gamma}\, \max_{\tau \in [0,\tau_0]}\, J(\varphi(\tau)) > 0
\]
be the mountain pass level. Since the segment $\varphi_0(\tau) = \tau v_\eps,\, \tau \in [0,\tau_0]$ belongs to $\Gamma$,
\begin{equation} \label{220K}
\beta \le \max_{\tau \in [0,\tau_0]}\, J(\varphi_0(\tau)) \le \max_{\tau \ge 0}\, J(\tau v_\eps).
\end{equation}

\begin{lemma} \label{Lemma 3K}
Suppose that the assumptions of Theorem \ref{Theorem 3} hold. In each of the two cases in Theorem \ref{Theorem 3},
\[
\max_{\tau \ge 0}\, J(\tau v_\eps) < \widetilde{K}_p\bigg(\tilde{h}_p^{-1}\bigg(\frac{\mu}{S_p^{p^\ast/p}}\bigg)\bigg)
\]
for all sufficiently small $\eps > 0$.
\end{lemma}

\begin{proof}
Suppose that the conclusion of the lemma is false. Then there are sequences $\seq{\eps_j}$ and $\seq{\tau_j}$, with $\eps_j, \tau_j > 0$ and $\eps_j \to 0$, such that
\begin{equation} \label{213}
J(\tau_j v_{\eps_j}) = H\bigg(\frac{\tau_j^p}{p} \int_\Omega |\nabla v_{\eps_j}|^p\, dx\bigg) - \frac{\lambda \tau_j^{\gamma p}}{\gamma p} \int_\Omega v_{\eps_j}^{\gamma p}\, dx - \frac{\mu \tau_j^{p^\ast}}{p^\ast} \ge \widetilde{K}_p\bigg(\tilde{h}_p^{-1}\bigg(\frac{\mu}{S_p^{p^\ast/p}}\bigg)\bigg)
\end{equation}
and
\begin{equation} \label{214K}
\tau_j\, J'(\tau_j v_{\eps_j})\, v_{\eps_j} = h\bigg(\frac{\tau_j^p}{p} \int_\Omega |\nabla v_{\eps_j}|^p\, dx\bigg)\, \tau_j^p \int_\Omega |\nabla v_{\eps_j}|^p\, dx - \lambda \tau_j^{\gamma p} \int_\Omega v_{\eps_j}^{\gamma p}\, dx - \mu \tau_j^{p^\ast} = 0.
\end{equation}
Let $t_j = \tau_j^p \int_\Omega |\nabla v_{\eps_j}|^p\, dx$. Then \eqref{213} and \eqref{214K} can be written as
\begin{equation} \label{216K}
H\bigg(\frac{t_j}{p}\bigg) \ge \widetilde{K}_p\bigg(\tilde{h}_p^{-1}\bigg(\frac{\mu}{S_p^{p^\ast/p}}\bigg)\bigg) + \frac{\mu t_j^{p^\ast/p}}{p^\ast \left(\dint_\Omega |\nabla v_{\eps_j}|^p\, dx\right)^{p^\ast/p}} + \frac{\lambda t_j^\gamma \dint_\Omega v_{\eps_j}^{\gamma p}\, dx}{\gamma p \left(\dint_\Omega |\nabla v_{\eps_j}|^p\, dx\right)^\gamma}
\end{equation}
and
\begin{equation} \label{217K}
t_j\, h\bigg(\frac{t_j}{p}\bigg) = \frac{\mu t_j^{p^\ast/p}}{\left(\dint_\Omega |\nabla v_{\eps_j}|^p\, dx\right)^{p^\ast/p}} + \frac{\lambda t_j^\gamma \dint_\Omega v_{\eps_j}^{\gamma p}\, dx}{\left(\dint_\Omega |\nabla v_{\eps_j}|^p\, dx\right)^\gamma},
\end{equation}
respectively.

By \eqref{216K} and \eqref{eq:growth}, 
\[
\frac{H(1)}{p^{p^*/q}}t_j^{p^*/q}\ge H\bigg(\frac{t_j}{p}\bigg) \ge \widetilde{K}_p\bigg(\tilde{h}_p^{-1}\bigg(\frac{\mu}{S_p^{p^\ast/p}}\bigg)\bigg) + \frac{\mu}{p^\ast \left(\dint_\Omega |\nabla v_{\eps_j}|^p\, dx\right)^{p^\ast/p}}t_j^{p^\ast/p}.
\]
Since $p^*/q<p^*/p$, this implies that $\seq{t_j}$ is bounded and hence converges to some $t_0 \ge 0$ for a renamed subsequence.
Passing to the limit in \eqref{216K} gives
\[
H\bigg(\frac{t_0}{p}\bigg) \ge \widetilde{K}_p\bigg(\tilde{h}_p^{-1}\bigg(\frac{\mu}{S_p^{p^\ast/p}}\bigg)\bigg) + \frac{\mu}{p^\ast} \left(\frac{t_0}{S_p}\right)^{p^\ast/p} > 0,
\]
so $t_0 > 0$. Dividing \eqref{217K} by $t_j^{p^\ast/p}$ gives
\begin{equation} \label{215K}
\tilde{h}_p(t_j) = \frac{\mu}{\left(\dint_\Omega |\nabla v_{\eps_j}|^p\, dx\right)^{p^\ast/p}} + \frac{\lambda \dint_\Omega v_{\eps_j}^{\gamma p}\, dx}{t_j^{p^\ast/p - \gamma} \left(\dint_\Omega |\nabla v_{\eps_j}|^p\, dx\right)^\gamma}.
\end{equation}
Now passing to the limit in \eqref{215K} gives
\begin{equation} \label{219K}
\tilde{h}_p(t_0) = \frac{\mu}{S_p^{p^\ast/p}},
\end{equation}
so
\begin{equation} \label{218K}
t_0 = \tilde{h}_p^{-1}\bigg(\frac{\mu}{S_p^{p^\ast/p}}\bigg).
\end{equation}

Combining \eqref{215K} with \eqref{34}, \eqref{35q}, and \eqref{219K} gives as $j\to\infty$
\[
\tilde{h}_p(t_j) \ge \tilde{h}_p(t_0) + \begin{cases}
\sigma_j\, \eps_j^{N-\gamma(N-p)} + \O(\eps_j^{(N-p)/(p-1)}) & \text{if } \gamma > \frac{N(p - 1)}{(N - p)p}\\[7.5pt]
\sigma_j\, \eps_j^{N/p}\, |\log \eps_j| + \O(\eps_j^{(N-p)/(p-1)}) & \text{if } \gamma = \frac{N(p - 1)}{(N - p)p},
\end{cases}
\]
where
\[
\sigma_j = \frac{\kappa \lambda}{t_j^{p^\ast/p - \gamma} \left(\dint_\Omega |\nabla v_{\eps_j}|^p\, dx\right)^\gamma} \to \frac{\kappa \lambda}{t_0^{p^\ast/p - \gamma} S_p^\gamma} > 0.
\]
It follows from this that in each of the two cases in the lemma and for all sufficiently large $j$, $\tilde{h}_p(t_j) \ge \tilde{h}_p(t_0)$. Since $\tilde{h}_p$ is decreasing, this gives $t_j \le t_0$. Since $\widetilde{K}_p$ is nondecreasing, this in turn gives $\widetilde{K}_p(t_j) \le \widetilde{K}_p(t_0)$. However, dividing \eqref{217K} by $p^\ast$, subtracting from \eqref{216K}, and using \eqref{218K} gives
\[
\widetilde{K}_p(t_j) \ge \widetilde{K}_p(t_0) + \frac{\left(\dfrac{1}{\gamma p} - \dfrac{1}{p^\ast}\right) \lambda t_j^\gamma \dint_\Omega v_{\eps_j}^{\gamma p}\, dx}{\left(\dint_\Omega |\nabla v_{\eps_j}|^p\, dx\right)^\gamma} > \widetilde{K}_p(t_0).
\]
This contradiction completes the proof.
\end{proof}

We can now conclude the proof of Theorem \ref{Theorem 3}. By \eqref{220K} and Lemma \ref{Lemma 3K},
\[
\beta < \widetilde{K}_p\bigg(\tilde{h}_p^{-1}\bigg(\frac{\mu}{S_p^{p^\ast/p}}\bigg)\bigg).
\]
Then for all sufficiently small $b_\infty \ge 0$,
\[
\beta < \beta_{\mu,b_\infty,p}^\ast
\]
by Theorem \ref{Theorem 2K} ($i$) and hence $J$ satisfies the \PS{\beta} condition by Corollary \ref{Corollary 1K} \ref{Corollary 1.i}. So $\beta > 0$ is a critical level of $J$ by the mountain pass theorem.

\begin{remark} The previous proof provides in particular the following estimate of the energy of the mountain pass solution $u$:
\[
0 < J(u) < \widetilde{K}_p\bigg(\tilde{h}_p^{-1}\bigg(\frac{\mu}{S_p^{p^\ast/p}}\bigg)\bigg).
\]
\end{remark}

\subsection{Proof of Theorem \ref{Theorem 4}}
Throughout this subsection, $\ell=q$ and $h$ satisfies also condition \eqref{hp:h}. The energy functional associated with problem \eqref{27q} is
\[
J(u) = H(\EA{u}) - \int_\Omega \left(\eta\frac{c(x)}{\gamma q}\, |u|^{\gamma q} + \frac{b(x)}{q^\ast}\, |u|^{q^\ast}\right) dx, \quad u \in W^{1,\,\A}_0(\Omega).
\]

\begin{lemma} \label{Lemma 1q}
Let \ref{K1} hold for $K_q$. If $\eta < \alpha \gamma q\, \lambda_1(\gamma,q,a)$, there exists $\varrho > 0$ such that
\[
\inf_{\norm{u} = \varrho}\, J(u) > 0.
\]
\end{lemma}

\begin{proof}
By \eqref{8K} and \eqref{9K},
\[
H(\EA{u}) = K_q\big(\pnorm{\nabla u}^p,\pnorm[q,a]{\nabla u}^q\big) + \frac{1}{q^\ast}\, \rho_\A(\nabla u)\, h(\EA{u}) \ge \alpha\, \rho_\A(\nabla u)^\gamma,
\]
while
\[
\int_\Omega c(x)|u|^{\gamma q}\, dx \le \frac{1}{\lambda_1(\gamma,q,a)} \left(\int_\Omega a(x)|\nabla u|^q\, dx\right)^\gamma \le \frac{\rho_\A(\nabla u)^\gamma}{\lambda_1(\gamma,q,a)}
\]
by \eqref{10q}. 
Thus, by \eqref{4}, for all $u \in W^{1,\,\A}_0(\Omega)$,
\[
J(u) \ge \left(\alpha - \frac{\eta}{\gamma q\, \lambda_1(\gamma,q,a)}\right) \rho_\A(\nabla u)^\gamma - \frac{\kappa}{q^\ast}\rho_\A(\nabla u)^{q^\ast/q}.
\]
Since $\eta < \alpha \gamma q\, \lambda_1(\gamma,q,a)$, $\gamma < q^\ast/q$, and by \eqref{18}, the desired conclusion follows.
\end{proof}

We take $x_0 = 0$ without loss of generality. Let $\psi \in C^\infty_0(B_\rho(0))$ be a cut-off function such that $0 \le \psi \le 1$ and $\psi = 1$ on $B_{\rho/2}(0)$, and set
\[
u_{\eps,\delta}(x) := \frac{\psi(x/\delta)}{\left(\eps^{q/(q-1)} + |x|^{q/(q-1)}\right)^{(N-q)/q}}, \qquad v_{\eps,\delta}(x) := \frac{u_{\eps,\delta}(x)}{|u_{\eps,\delta}|_{q^\ast}}
\]
for $\eps > 0$ and $0 < \delta \le 1$.

\begin{lemma} \label{Lemma 2q}
Let the assumptions of Theorem \ref{Theorem 4} hold. For all sufficiently small $\eps > 0$ and $\eps/\delta>0$,
\[
J(\tau v_{\eps,\delta}) \to - \infty \quad \text{as } \tau \to + \infty.
\]
\end{lemma}

\begin{proof}
Since $\supp(v_{\eps,\delta}) \subset B_\rho(0)$, by the assumptions in \eqref{9q} we have 
\begin{equation} \label{28q}
J(\tau v_{\eps,\delta}) \le H\bigg(\frac{\tau^p}{p} |\nabla v_{\eps,\delta}|_p^p + \frac{a_0\tau^q}{q} |\nabla v_{\eps,\delta}|_q^q\bigg) - \frac{b_\infty \tau^{q^\ast}}{q^\ast}, \quad \tau \ge 0.
\end{equation}
Let $t = a_0\tau^q |\nabla v_{\eps,\delta}|_q^q$. Then $t \to +\infty$ as $\tau \to +\infty$, and \eqref{28q}, \eqref{215}, and \eqref{eq:conseq-h3} give
\[
J(\tau v_{\eps,\delta}) \le - \frac{t^{q^\ast/q}}{q^\ast} \left[b_\infty\left(a_0|\nabla v_{\eps,\delta}|_q^q\right)^{-q^\ast/q} - \frac{q^\ast H(t/q)}{t^{q^\ast/q}}\right]+\O(t^{p/q-1})\quad\mbox{as } \tau\to\infty.
\]
Since $|\nabla v_{\eps,\delta}|_q^q \to S_q$ as $\eps \to 0$ and $\eps/\delta\to 0$ by \eqref{214} and
\[
\lim_{t \to + \infty}\, \frac{q^\ast H(t/q)}{t^{q^\ast/q}} = \lim_{t \to + \infty}\, \frac{h(t/q)}{t^{q^\ast/q - 1}} = \lim_{t \to + \infty}\, \tilde{h}_q(t) < \frac{b_\infty}{(a_0 S_q)^{q^\ast/q}}
\]
by L’H\^{o}pital’s rule and \eqref{21q}, the desired conclusion follows.
\end{proof}

By Lemma \ref{Lemma 2q} and by \eqref{214}, there exists $\tau_0 > 0$ such that $J(\tau_0 v_{\eps,\delta}) < 0$ and $\norm{\tau_0 v_{\eps,\delta}} > \varrho$ for all sufficiently small $\eps > 0$ and $\eps/\delta>0$. This, together with Lemma \ref{Lemma 1q}, shows that, under \eqref{21q}, $J$ has the mountain pass geometry. Denote by
\[
\Gamma := \big\{\varphi \in C([0,\tau_0],W^{1,\,\A}_0(\Omega)) : \varphi(0) = 0,\, \varphi(\tau_0) = \tau_0 v_{\eps,\delta}\big\}
\]
the class of paths in $W^{1,\,\A}_0(\Omega)$ joining the origin to $\tau_0 v_{\eps,\delta}$ and let
\[
\beta := \inf_{\varphi \in \Gamma}\, \max_{\tau \in [0,\tau_0]}\, J(\varphi(\tau)) > 0
\]
be the mountain pass level. Since the segment $\varphi_0(\tau) = \tau v_{\eps,\delta},\, \tau \in [0,\tau_0]$ belongs to $\Gamma$,
\begin{equation} \label{220q}
\beta \le \max_{\tau \in [0,\tau_0]}\, J(\varphi_0(\tau)) \le \max_{\tau \ge 0}\, J(\tau v_{\eps,\delta}).
\end{equation}

For $\delta=\delta(\varepsilon)$, 
\begin{equation} \label{36q}
\begin{aligned}
J(\tau v_{\eps,\delta}) \le & H\bigg(\frac{\tau^p}{p} \int_\Omega |\nabla v_{\eps,\delta}|^p\, dx+a_0\frac{\tau^q}{q} \int_\Omega |\nabla v_{\eps,\delta}|^q\, dx\bigg) - b_\infty\frac{\tau^{q^\ast}}{q^\ast} - \eta c_0\frac{\tau^{\gamma q}}{\gamma q}\int_\Omega v_{\eps,\delta}^{\gamma q}\, dx\\
&=:\varphi_\eps(\tau).
\end{aligned}
\end{equation}
Consequently, in view of \eqref{220q} and \eqref{22q}, in order to prove Theorem \ref{Theorem 4} it is enough to show that 
\begin{equation}\label{eq:thesisK}
\max_{\tau\ge 0}\varphi_\eps(\tau)<\widetilde{K}_q\left(\tilde{h}_q^{-1}\left(\frac{b_\infty}{(a_0S_q)^{q^\ast/q}}\right)\right)
\end{equation}
for all sufficiently small $\eps>0$.

\begin{lemma} 
Under the hypotheses of Theorem \ref{Theorem 4}, if $\eps/\delta \to 0$ and the following limits hold
\begin{equation} \label{222K}
\lim_{\eps\to 0}\frac{|\nabla v_{\eps,\delta}|_p^p}{|v_{\eps,\delta}|_{\gamma q}^{\gamma q}} = 0, \qquad \lim_{\eps\to 0}\frac{(\eps/\delta)^{(N-q)/(q-1)}}{|v_{\eps,\delta}|_{\gamma q}^{\gamma q}} = 0,
\end{equation}
then there exist $\eps_0,\, \delta_0 > 0$ such that \eqref{eq:thesisK} holds true when $b_\infty > 0$.
\end{lemma}

\begin{proof}
Suppose \eqref{eq:thesisK} is false. Then there are sequences $\eps_j \to 0$ and $\tau_j > 0$ such that
\begin{equation} \label{2180K}
\varphi_{\eps_j}(\tau_j) = H\left(\frac{\tau_j^p}{p} |\nabla v_j|_p^p + \frac{a_0\, \tau_j^q}{q} |\nabla v_j|_q^q\right) - \eta \frac{c_0\, \tau_j^{\gamma q}}{{\gamma q}} |v_j|_{\gamma q}^{\gamma q} - \frac{b_\infty\, \tau_j^{q^\ast}}{q^\ast} \ge \widetilde{K}_q\left(\tilde{h}_q^{-1}\left(\frac{b_\infty}{(a_0S_q)^{q^\ast/q}}\right)\right)
\end{equation}
and
\begin{equation} \label{2190K}
\tau_j\, \varphi_{\eps_j}'(\tau_j) = (\tau_j^p |\nabla v_j|_p^p + a_0\, \tau_j^q |\nabla v_j|_q^q)h\left(\frac{\tau_j^p}{p} |\nabla v_j|_p^p + \frac{a_0\, \tau_j^q}{q} |\nabla v_j|_q^q\right) - \eta c_0\, \tau_j^{\gamma q} |v_j|_{\gamma q}^{\gamma q} - b_\infty\, \tau_j^{q^\ast} = 0,
\end{equation}
where $v_j = v_{\eps_j,\delta_j}$ and $\delta_j = \delta(\eps_j)$. By \eqref{214}--\eqref{216},
\begin{equation}\label{eq:convergences}
|\nabla v_j|_q^q \to S_q, \qquad |\nabla v_j|_p^p \to 0, \qquad |v_j|_{\gamma q}^{\gamma q} \to 0.
\end{equation}
So, dividing \eqref{2190K} by $\left[q\left(\frac{\tau_j^p}{p} |\nabla v_j|_p^p + \frac{a_0\, \tau_j^q}{q} |\nabla v_j|_q^q\right)\right]^{q*/q-1}$ we get 
\begin{equation}\label{eq:htildeinfty}
\tilde h_q\left(\frac{\tau_j^p}{p} |\nabla v_j|_p^p + \frac{a_0\, \tau_j^q}{q} |\nabla v_j|_q^q\right)\ge \frac{b_\infty \tau_j^{q^*}}{(\tau_j^p |\nabla v_j|_p^p + a_0\, \tau_j^q |\nabla v_j|_q^q)\left[q\left(\frac{\tau_j^p}{p} |\nabla v_j|_p^p + \frac{a_0\, \tau_j^q}{q} |\nabla v_j|_q^q\right)\right]^{q*/q-1}}
\end{equation}
Suppose by contradiction that $\seq{\tau_j}$ is not bounded, then $\tau_j\to+\infty$ up to a  subsequence. Along this subsequence, in view of \eqref{eq:convergences}, 
\[
\tau_j^p |\nabla v_j|_p^p + a_0\, \tau_j^q |\nabla v_j|_q^q\sim a_0\, \tau_j^q S_q,\quad \left[q\left(\frac{\tau_j^p}{p} |\nabla v_j|_p^p + \frac{a_0\, \tau_j^q}{q} |\nabla v_j|_q^q\right)\right]^{q*/q-1}\sim (a_0\, \tau_j^q S_q)^{q*/q-1},
\]
and so the right hand side of \eqref{eq:htildeinfty} goes to $b_\infty/(a_0S_q)^{q^*/q}$, contradicting the second inequality in \eqref{21q}. Therefore, $(\tau_j)$ is bounded and so it converges to some $\tau_0 > 0$ for a renamed subsequence ($\tau_0\neq 0$ by \eqref{2180K}). Passing to the limit in \eqref{2190K} gives
\begin{equation} 
a_0\, S_q\, \tau_0^q h\left(\frac{a_0\tau_0^q S_q}{q} \right) - b_\infty\, \tau_0^{q^\ast} = 0,
\end{equation}
that is $a_0\tau_0^q S_q = \tilde h_q^{-1}\left(\frac{b_\infty}{(a_0\, S_q)^{q^\ast/q}}\right)$.
Now, setting $t_{1j}:=\tau_j^p|\nabla v_j|_p^p$ and $t_{2j}:=a_0\tau_j^q|\nabla v_j|_q^q$, we get $t_{1j}=\O(|\nabla v_j|_p^p)\to 0$ and $t_{2j}\to a_0\tau_0^q S_q=:t_{2,0}>0$ as $j\to \infty$. 
Dividing \eqref{2190K} by $t_{2j}^{q^\ast/q}$ and taking into account \eqref{19K} and \eqref{hp:h} we obtain
\[
\begin{aligned}
\tilde h_q(t_{2j})+\O(|\nabla v_j|_p^p)&=\frac{t_{1j}+t_{2j}}{t_{2j}^{q^*/q}}\left[h\left(\frac{t_{2j}}{q}\right)+\O(|\nabla v_j|_p^p)\right]\\
&= \frac{t_{1j}+t_{2j}}{t_{2j}^{q^*/q}}h\left(\frac{t_{1j}}{p}+\frac{t_{2j}}{q}\right)=\frac{b_\infty}{(a_0|\nabla v_j|_q^q)^{q^*/q}}+\eta\frac{c_0\tau_j^{\gamma q}}{t_{2j}^{q^*/q}}|v_j|_{\gamma q}^{\gamma q}\\
&=\frac{b_\infty}{(a_0S_q)^{q^*/q}}+\O\big((\eps_j/\delta_j)^{\frac{N-q}{q-1}}\big)+\eta\frac{c_0\tau_j^{\gamma q}}{t_{2j}^{q^*/q}}|v_j|_{\gamma q}^{\gamma q}
\end{aligned}
\]
where in the last equality we used \eqref{214}. By the hypothesis \eqref{222K} and the fact that $\eta c_0\tau_j^{\gamma q}/t_{2j}^{q^*/q}\to \eta c_0\tau_0^{\gamma q}/t_{20}^{q^*/q}>0$, we infer that $\tilde h_q(t_{2j}) \ge b_\infty/(a_0S_q)^{q^*/q} = \tilde h_q(t_{20})$. Since $\tilde h_q$ is decreasing and $\widetilde{K}_q$ is nondecreasing, this implies that 
\begin{equation}\label{eq:to_contradict}
\widetilde K_q(t_{2j})\le \widetilde K_q(t_{20})
\end{equation}
is eventually true for $j$. On the other hand, dividing \eqref{2190K} by $q^\ast$, subtracting from \eqref{2180K}, and using \eqref{214} and \eqref{hp:h} gives
\[
\begin{aligned}
H\left(\frac{t_{2j}}{q}\right)-\frac{1}{q^*}t_{2j}h\left(\frac{t_{2j}}{q}\right)+\O(|\nabla v_j&|_p^p)\ge H\left(\frac{t_{1j}}{p}+\frac{t_{2j}}{q}\right)-\frac{1}{q^*}(t_{1j}+t_{2j})h\left(\frac{t_{1j}}{p}+\frac{t_{2j}}{q}\right)\\
&\ge \widetilde{K}_q\left(\tilde{h}_q^{-1}\left(\frac{b_\infty}{(a_0S_q)^{q^\ast/q}}\right)\right)+\eta c_0\left(\frac{1}{{\gamma q}}-\frac{1}{q^*}\right)\tau_j^{\gamma q}|v_j|_{\gamma q}^{\gamma q}.
\end{aligned}
\]
This in turn gives by \eqref{214} and \eqref{222K}
\[
\widetilde K_q(t_{2j}) = K(0,t_{2j}) > \widetilde K_q(\tilde{h}_q^{-1}(\tilde{h}_q(t_{20})))= \widetilde K_q(t_{20})\quad \text{eventually for }j,
\]
contradicting \eqref{eq:to_contradict} and concluding the proof. 
\end{proof}

The proof of Theorem \ref{Theorem 4} can now be concluded as in Theorem \ref{Theorem 2}.

\begin{remark} As for Theorem \ref{Theorem 3}, also the proof of Theorem \ref{Theorem 4} give the following estimates for the energy of the mountain pass solution $u$ found:
\[
0 < J(u) < \widetilde{K}_q\bigg(\tilde{h}_q^{-1}\bigg(\frac{b_\infty}{(a_0S_q)^{q^\ast/q}}\bigg)\bigg).
\]
\end{remark}

\def\cprime{$''$}

\end{document}